\newcommand{\bc}{\begin{center}}
\newcommand{\ec}{\end{center}}
\newcommand{\be}{\begin{enumerate}}
\newcommand{\ee}{\end{enumerate}}
\newcommand{\beq}{\begin{equation}}
\newcommand{\eeq}{\end{equation}}
\newcommand{\bi}{\begin{itemize}}
\newcommand{\ei}{\end{itemize}}
\newcommand{\bd}{\begin{description}}
\newcommand{\ed}{\end{description}}
\newcommand{\ba}{\begin{array}}
\newcommand{\bea}{\begin{eqnarray*}}
\newcommand{\eea}{\end{eqnarray*}}
\newcommand{\ea}{\end{array}}
\newcommand{\bt}{\begin{tabular}}
\newcommand{\et}{\end{tabular}}
\newcommand{\fr}[1]{\mathfrak{#1}}
\newcommand{\mb}{\mbox}
\newcommand{\bmi}{\begin{minipage}}
\newcommand{\emi}{\end{minipage}}
\newtheorem{stel}{Theorem}[section]
\newtheorem{lemm}[stel]{Lemma}
\newtheorem{prop}[stel]{Proposition}
\newtheorem{exam}[stel]{Example}
\newtheorem{properties}[stel]{Properties}
\newtheorem{theo}[stel]{Theorem}
\theoremstyle{definition}
\newtheorem{defin}[stel]{Definition}
\newtheorem{rem}[stel]{Remark}
\newtheorem{definlem}[stel]{Definition-Lemma}
\newtheorem{assumption}{Assumptions}
\newcommand{\myitem}[1]{%
\item[#1]\protected@edef\@currentlabel{#1}%
}
\newcommand{\finsub}[0]{{\subseteq}_{\text{\sf{fin}}}}
\newcommand{\Matrix}[1]
    {\begin{pmatrix}
      \Matrix@r #1;\@bye;\Matrix@r
     \end{pmatrix}}
\def\Matrix@r #1;{\@bye #1\Matrix@z\@bye\Matrix@s #1,\@bye, }%
\def\Matrix@s #1,{#1\Matrix@t }%
\def\Matrix@t #1,{\@bye #1\Matrix@y\@bye\@firstofone {&#1}\Matrix@t}%
\def\Matrix@y #1\Matrix@t{\\ \Matrix@r }%
\def\Matrix@z #1\Matrix@r {}
\def\@bye  #1\@bye   {}%
\newsavebox\myboxA
\newsavebox\myboxB
\newlength\mylenA
\newcommand*\xoverline[2][0.75]{%
    \sbox{\myboxA}{$\m@th#2$}%
    \setbox\myboxB\null
    \ht\myboxB=\ht\myboxA%
    \dp\myboxB=\dp\myboxA%
    \wd\myboxB=#1\wd\myboxA
    \sbox\myboxB{$\m@th\myov{\copy\myboxB}$}
    \setlength\mylenA{\the\wd\myboxA}
    \addtolength\mylenA{-\the\wd\myboxB}%
    \ifdim\wd\myboxB<\wd\myboxA%
       \rlap{\hskip 0.5\mylenA\usebox\myboxB}{\usebox\myboxA}%
    \else
        \hskip -0.5\mylenA\rlap{\usebox\myboxA}{\hskip 0.5\mylenA\usebox\myboxB}%
    \fi}
\newdimen\BBthick    
\newdimen\BBgap      
\newdimen\BBsep      
\newdimen\BBinset    
\newcommand{\bbbarsop}[1]{\mathop{\mathpalette\bbbarsop@{#1}}}
\newcommand{\bbbarsop@}[2]{%
  \setbox0=\hbox{$\m@th#1 #2$}%
  \dimen0=\wd0   
  \dimen2=\ht0   
  \dimen4=\dp0   
  \dimen6=\dimexpr\dimen0-2\BBinset\relax
  \ooalign{%
    \hfil\box0\hfil\cr
    \hfil\raise\dimexpr\dimen2-\BBsep\relax
      \hbox{\vrule width \dimen6 height \BBthick depth 0pt}\hfil\cr
    \hfil\raise\dimexpr\dimen2-\BBsep-(\BBthick+\BBgap)\relax
      \hbox{\vrule width \dimen6 height \BBthick depth 0pt}\hfil\cr
    \hfil\raise\dimexpr-\dimen4+\BBsep\relax
      \hbox{\vrule width \dimen6 height \BBthick depth 0pt}\hfil\cr
    \hfil\raise\dimexpr-\dimen4+\BBsep+(\BBthick+\BBgap)\relax
      \hbox{\vrule width \dimen6 height \BBthick depth 0pt}\hfil\cr
  }%
}
\newcommand{\norm}[1]{\bigthickbar \hspace{0.1ex} #1 \hspace{0.1ex} \bigthickbar}
\newcommand{\pps}{\mathord{\begin{tikzpicture}[baseline=0ex, line width=1.5, scale=0.11]
\draw (1,0) -- (1,2);
\draw (0,1) -- (2,1);
\end{tikzpicture}}}
\newcommand{\mms}{\mathord{\begin{tikzpicture}[baseline=0ex, line width=1.5, scale=0.11]
\draw (0,0.6) -- (1.5,0.6);
\end{tikzpicture}}}
\newcommand*{\myov}[1]{\overbracket[1.40pt][-2.0pt]{\hspace{0.15em}#1\hspace{0.15em}}}
\newcommand{\bigthickbar}{\rule[-0.43ex]{1.5pt}{2.3ex}} 
\newcommand\restr[2]{{
  \left.\kern-\nulldelimiterspace 
  #1 
  \littletaller 
  \right|_{#2} 
  }}
\newcommand{\littletaller}{\mathchoice{\vphantom{\big|}}{}{}{}}
\title{Inner products for strongly regular near-vector spaces and duality for finite dimensional near-vector spaces}
\author{L Boonzaaier, S. Marques and D. Moore} 
\address[Leandro Boonzaaier]{rain (Pty) Ltd, Cape Quarter, Somerset Road, Green Point, 8005,South Africa}
\email{leandro.boonzaaier@rain.co.za}
\address[Sophie Marques]{Department of Mathematical Sciences, Stellenbosch University, Stellenbosch 7600, South Africa, and National Institute for Theoretical and Computational Sciences, South Africa}
\email{smarques@sun.ac.za}
\address[Daniella Moore]{Department of Mathematical Sciences, Stellenbosch University, Stellenbosch 7600, South Africa}
\email{dmoore@sun.ac.za}
\begin{document}
\maketitle

\begin{abstract}
In this paper we develop a duality theory for all finite–dimensional near–vector spaces and introduce a notion of inner product tailored to the broad and natural class of \emph{strongly regular near–vector spaces}.  
This generalized construction extends the classical inner product beyond the classical framework, yielding rich families of examples on \emph{multiplicative near–vector spaces}.  
Within this setting, several familiar norms—such as those that fail to produce Hilbert spaces in the classical sense—emerge naturally as genuine inner–product–type norms.  
A further contribution is the extension of the theory of \emph{generalized (weighted) means} to arbitrary complex datasets.  
This extension unifies and generalizes the classical power and geometric means, carrying them beyond the domain of positive reals.  
\end{abstract}

\tableofcontents

{\bf Key words:} Near-vector spaces, Near-rings, Near-fields, Inner product near-spaces

{\it 2020 Mathematics Subject Classification:} 16Y30; 05B35
\section*{Acknowledgement}
We are deeply grateful to Karin-Therese Howell for her insightful discussions, without which this paper would not have seen the light of day.

\section{Introduction}
\justifying
\noindent André introduced the notion of near-vector spaces (see \cite{Andre}). Over the years, researchers have developed the theory in multiple directions (see, for instance, \cite{LSD2024,LSD2025,DeBruyn,HCW,How,HowMar,HowMey,HowRab,Howellspanning,DMZ, Dani,  Rabie, Wessels}). By analogy with classical linear algebra—where duality and inner product spaces knit together algebra, geometry, and analysis with powerful applications in functional analysis—a natural question is whether one can extend the notions of duality and inner products to the setting of near-vector spaces. The present paper undertakes this task, at least in part.

Our main observation is that a suitably more flexible definition of inner product, adapted to near-vector spaces, allows certain classical norms that are \emph{not} induced by any Hilbert structure in the usual linear setting to become Hilbertian when viewed on appropriately chosen near-vector space structures. In particular, by changing the underlying near-vector space structure while preserving enough algebraic control, we exhibit norms that admit inner products in this generalized sense. As a further application, we extend the theory of generalized means to families of complex numbers, obtaining a coherent framework that includes the classical power means as a special case and continues to make sense for complex parameters.

Inner products play a central role in the structure of many real world applications, particularly in the field of data science \cite{Schenck, Murphy}. In many applications, data points are represented as points in a high-dimensional (dependent on the number of features characterizing the data) vector space \cite{Brunton}. The inner product is then used as a means to compare the similarity of data points. For example, in Natural Language Processing (NLP), texts are converted to vectors and the semantic similarity of texts are measured by the cosine of the angle between two vectors. If the angle is small, the cosine is close to 1 and the texts are considered semantically similar. This is known as cosine-similarity in the literature \cite{Sohangir}. Inner products, and related ideas such as orthogonality, play a central role in techniques such as Principle Component Analysis (PCA), where one tries to construct a new basis in which the data becomes uncorrelated \cite{Brunton, James}. Inner products also play an important role in more advanced data science techniques such as neural networks \cite{Nielsen}. For each neuron, one computes $ \langle w_{i}, x \rangle + b$ which involves computing the inner product between the neuron's weight vector and the input vector (data point) \cite{Nielsen}. These are but two examples of the applications of inner products in data science. In essence, inner products provide geometric notions that underpin modern data science methods. However, data is usually interpreted as vectors in spaces with linear structures, which is often an idealization rather than a reflection of the underlying structure of real world data. Any non-linearities inherent in the data structure generally introduce computational complexity 
\cite{Brunton}. For this reason, near-vector spaces may offer a more effective framework for handling data with non-linear structures. Near-vector spaces retain enough algebraic structure to define operations such as addition and scalar multiplication, whereas being more flexible than vector spaces. This flexibility makes them well-suited for modelling spaces in which the data does not exhibit strictly linear behaviour. The introduction of inner products on near-vector spaces thus provides a natural extension of classical geometric tools into this more general setting. By defining such inner products, we obtain generalized notions of norm, angle, and orthogonality that can operate within structures that are not governed by strict linearity.

After a brief preliminaries section, Section~3 introduces a notion of inner product and the associated norm on strongly regular near-vector spaces. Section~4 recalls the classical correspondences: every inner product induces a norm, and conversely a norm arises from an inner product if and only if it satisfies the parallelogram law. Section~5 develops orthogonality in this setting. Section~6 shows that the dual of any finite-dimensional near-vector space is again a near-vector space; in particular, we study the inner-product dual, annihilators, and orthogonal complements. Section~7 defines an angle determined by inner product on strongly regular near-vector spaces. Section~8 constructs a broad family of inner products on multiplicative near-vector spaces. Section~9 uses these constructions to recover the $\ell^p$, $\ell^{p,q}$, and $\mathcal{L}^p$ norms—including the cases $p=\infty$—via suitable inner products on specific multiplicative near-vector spaces. Finally, Section~10 leverages these limiting constructions to define generalized means for families of complex numbers. Generalized means play a foundational role across data science, physics, and engineering—appearing in regularization schemes, loss functions, distance metrics, and sparse-signal processing; see, e.g., \cite{Brunton,James,Murphy}.

\section{Preliminary material}
We begin this section by recalling the definition of a scalar group. This definition is needed to define the most general notion of a near-vector space later on in this section.

\begin{defin}\cite[Definition 1.1]{MarquesMoore} \label{scalargroup}
A {\sf scalar group} \( F \) is a tuple \( F = (F, \cdot, 1, 0, -1) \) where \((F, \cdot, 1)\) is a monoid, \(0, -1 \in F\), satisfying the following conditions: 
\begin{itemize}
 \vspace{0cm}    \item \(0 \cdot \alpha = 0 = \alpha \cdot 0\) for all \(\alpha \in F\);
    \item \(\{ \pm 1 \}\) is the solution set of the equation \(x^2 = 1\);
    \item \((F \setminus \{ 0 \}, \cdot, 1)\) is a group.
\end{itemize}
\vspace{0cm} 
For all \(\alpha \in F\), we denote \(-\alpha\) as the element \((-1) \cdot \alpha\).
\end{defin}
In this paper, it happens that we consider $F$ with several binary operations $\cdot_i$ defined on it, where $i\in I$ for some index set $I$. Given an element $\alpha\in F$, we denote the composition of $\alpha$ with itself $n$ times with respect to the operation $\cdot_i$, $\alpha^{n_i}$ and when $\alpha\in F^*$, we write $(\alpha^{-1})^{n_i} = \alpha^{-n_i}$. 

Near-fields play a central role in the theory of near-vector spaces; for completeness, we briefly recall the definition.

\begin{defin}\cite[Definition 3.2]{LSD2024}
    Let $(F, +, \cdot)$ be a near-field. An element \(\gamma \in F\) is {\sf right distributive} if, for every \(\alpha, \beta \in F\), \((\alpha + \beta)\gamma = \alpha \gamma + \beta \gamma\). We denote by \(F_{\fr{d}}\) the set of all right distributive elements of \(F\).
\end{defin}

Our main examples and applications arise from multiplicative automorphisms. Following \cite{BM2024}, we briefly recall the relevant definitions and auxiliary notions.

\begin{defin} \label{multauto}  
A {\sf multiplicative automorphism $\sigma$ of the near-field $(F, +, \cdot)$} is defined as a monoid automorphism of $(F, \cdot)$. In other words, $\sigma$ satisfies the following properties:  
\begin{itemize} 
\vspace{0cm}  \item $\sigma(1) = 1$, and  
\item $\sigma(\alpha \beta) = \sigma(\alpha) \sigma(\beta)$ for all $\alpha, \beta \in F$.
\end{itemize} 
\end{defin}

\begin{definlem}\cite[Definition-Lemma 3.4]{LSD2025}  
\label{induced}  
Let $\sigma$ and $\rho$ be multiplicative automorphisms of $F$.  
\begin{enumerate}  
    \vspace{0cm}  \item We define the \textsf{addition associated with $\sigma$} as a binary operation $+_\sigma$ on $F$, for all $\alpha, \beta \in F$, as  
    \[
    \alpha +_\sigma \beta = \sigma^{-1}(\sigma(\alpha) + \sigma(\beta)).
    \]  
    The triple $(F, +_\sigma, 0)$ forms an abelian group.  
    If the near-field is denoted by $F_u$ for some $u \in Q(V)^*$, this addition is written as $+_{u, \sigma}$.     Additionally, for $n \in \mathbb{N}$, we define  
    \[
    {}^{\sigma \! \!}\sum_{k=1}^{n} \alpha_k = \alpha_1 +_\sigma \alpha_2 +_\sigma \cdots +_\sigma \alpha_n.
    \]
  
 \item We define the \textsf{near-field induced by $\sigma$}, denoted by $F_{\sigma}$, as the near-field $(F, +_{\sigma}, \cdot)$. 
 
  \item We define the \textsf{scalar multiplication induced by $\rho$} as the scalar multiplication $\cdot_\rho$ on $F$, for all $\alpha, \beta \in F$, as 
    \[
    \alpha \cdot_\rho \beta = \rho(\alpha) \cdot \beta.
    \]  
    This operation $\cdot_\rho$ defines a left monoid action of $F$ on itself.  
\end{enumerate}    
\end{definlem}
In this paper, we develop an inner product theory over scalar groups that we call \(F\)\nobreakdash-like lines. We begin with the definition of an \(F\)\nobreakdash-like line.

\begin{defin}
\label{line}
    Let $F\in \{\mathbb{R},\mathbb{C}\}$, $+$ and $\cdot$ be the usual binary operations on $F$ and $\pps $ and $\bullet$ be binary operations on $F$. We say that $((F, \pps,\bullet),\psi)$ is an {\sf $F$-like line} if $(F,\bullet,\pps)$ is a field and the map 
    \[\psi: (F,+,\cdot) \rightarrow (F,\pps,\bullet)\]
    is a field isomorphism.
    
 For $m \in \mathbb{Z}$, $\alpha\in F$ we set:
\begin{multicols}{3}
    \item[$1.$] $\mathbf{m}:=\psi(m)$;
    \item[$2.$]\label{abs} $\myov{\alpha}:= \psi(\overline{\alpha'}) $; 
    \item[$3.$]\label{plus} $\mathbb{R}^{\pps}:=\psi(\mathbb{R}^{+})$;
    \item[$4.$]\label{leq} $\gamma \pmb{\leq} \delta$ if $\delta \mms \gamma \in \mathbb{R}^{\pps}$;
    \item[$5.$]\label{norm} $\norm{ \alpha}:= \psi(|\alpha'|)$;
    \item[$6.$]\label{i} $\mathbf{i}:= \psi(i)$, 
\end{multicols}

where $\gamma,\delta,  \in F$ and $\alpha' \in F$ such that $\alpha = \psi(\alpha')$. 
 
 We denote 
\vspace{-0cm}\begin{itemize}
        \item $x^{\bm{m}} = x \bullet \ldots \bullet x$, $m$ times, when $m \in \mathbb{N}$, and 
        \item $x^{\bm{m}} = x^{\mms(\mms \bm{m})}$ when $m \in \mathbb{Z} \setminus \mathbb{N}$;
        \item given $a \in \mathbb{R}^{\pps}$, $x^{\bm{2}}= a$ has a unique solution in $\mathbb{R}^{\pps}$. We denote this unique solution by $x = a^{\bm{2^{-1}}}$.
    \end{itemize}
    \end{defin}
    \begin{exam}\label{exam1}
Let \(F\in\{\mathbb{R},\mathbb{C}\}\) and let \(\sigma:F\to F\) be a
multiplicative automorphism of \(F\).
\[
\sigma^{-1}:\ (F,+,\cdot)\longrightarrow(F,+_{\sigma},\cdot),\qquad
\alpha\longmapsto\sigma^{-1}(\alpha)
\]
is a field isomorphism (multiplication is preserved by multiplicativity of \(\sigma\),
and the transported addition is preserved by construction). Hence
\(((F,+_{\sigma},\cdot), \sigma^{-1})\) is an \(F\)\nobreakdash-like line.

\smallskip
 On \(F=\mathbb{R}\), \(\sigma(x)=x^{5}\) is a continuous
automorphism of \(\mathbb{R}^\times\), yielding a nontrivial \(+_{\sigma}\).
On \(F=\mathbb{C}\), complex conjugation \(\sigma(z)=\overline{z}\) is a (field, hence
multiplicative) automorphism and gives another instance. A classification of continuous multiplicative automorphisms of \(\mathbb{R}\) and \(\mathbb{C}\) can be found in~\cite{BM2024}.
    \end{exam}

\noindent We collect a few properties that are immediate from \(\psi\) being a field isomorphism; they will be used in the proofs that follow. For a proof in the classical setting, see \cite[Thm.~1.37]{Rudin2}.

\begin{properties}
\label{properties}
Let $((F, \pps,\bullet),\psi)$ be an $F$-like line. Then for all $\alpha, \beta, \gamma \in F$, we have:
\vspace{0cm}\begin{enumerate}
        \item\label{m} 
    $\mathbf{m} \bullet \alpha = \underbrace{\alpha \pps \ldots \pps \alpha}_{\text{$m$ times}}$ and $\mathbf{m^{-1}} \bullet (\underbrace{\alpha \pps \ldots \pps \alpha}_{\text{$m$ times}}) = \alpha$.
    \item\label{-1} for all $\alpha, \beta \in F$, we have $-1\bullet \psi(\alpha) = \mms \psi(\alpha)$ and $(-\alpha)\bullet \beta = \mms (\alpha \bullet \beta)$;
    \item\label{double} $\myov{\myov{\alpha}} = \alpha$;
       \item\label{preserve} $\myov{\alpha} \pps \myov{\beta}  = \myov{\alpha \pps \beta}$ and $\myov{\alpha \bullet \beta} = \myov{\alpha} \bullet \myov{\beta}$;
    
    \item\label{minus} $\mms\myov{\alpha} = \myov{\mms\alpha}$;
    \item\label{psileq} if $\alpha \pmb{\leq} \beta$, then $\psi(\alpha) \pmb{\leq} \psi(\beta)$;
    \item\label{squareroot}
    if $\alpha, \beta \in \mathbb{R}^{\pps}$ and $\alpha^{\bm{2}} \pmb{\leq} \beta^{\bm{2}}$, then $\alpha \pmb{\leq} \beta$; 
    
    \item\label{leq1} if $\alpha \pmb{\leq} \beta$, we have $\gamma \bullet \alpha \pmb{\leq} \gamma \bullet \beta$; 
    \item\label{triangleineq} $\norm{\alpha \bullet \beta} = \norm{\alpha} \bullet \norm{\beta}$ and $\norm{\alpha \pps \beta} \pmb{\leq} \norm{\alpha}\pps \norm{\beta}$;
    \item\label{square} $\alpha \bullet \myov{\alpha} = \norm{\alpha}^{\bm{2}}$.
    \end{enumerate}
\end{properties}

\begin{properties}
\label{invertible}
Let $((\mathbb{C}, \pps, \bullet),\psi)$ be a $\mathbb{C}$-like line. Then, as in the usual case, we have 
\begin{multicols}{3}
\begin{description} 
    \item[$1.$] $\mathbf{i}$ is invertible and $\mathbf{i^{-1}} = \mms\mathbf{i}$;
    \item[$2.$] $\mathbf{i}^{\bm{2}} = -1$ and $\mathbf{i}^{\bm{-2}} = -1$;
    \item[$3.$] $\mathbf{i}^{\bm{3}}= \mms\mathbf{i}$ and $\mathbf{i}^{\bm{-3}}= \mathbf{i}$. 
    \end{description}
\end{multicols}
\end{properties}

To speak of “real” and “imaginary” parts inside an \(F\)-like line, we simply
transport the classical notions along the fixed field isomorphism \(\psi\).

\begin{defin}
Let \(((\mathbb{C},\pps,\bullet),\psi)\) be a \(\mathbb{C}\)-like line. For \(\alpha\in\mathbb{C}\), write \(\alpha=\psi(\alpha')\) with the unique \(\alpha'\in F\).
We define the real and imaginary parts in \((\mathbb{C},\pps,\bullet)\) by
\[
\mathbf{Re}(\alpha):=\psi\big(\operatorname{Re}(\alpha')\big),
\qquad
\mathbf{Im}(\alpha):=\psi\big(\operatorname{Im}(\alpha')\big).
\]
\end{defin}


We record several identities for \(\mathbf{Re}\) and \(\mathbf{Im}\) that remain valid with the
transported operations \(\pps\) and \(\bullet\); these will be used in subsequent proofs.

\begin{properties}\label{properties2}
Let \(((\mathbb{C},\pps,\bullet),\psi)\) be an \(F\)-like line and \(\alpha,\beta\in\mathbb{C}\).
\begin{enumerate}
    \item\label{1} \(\mathbf{Re}(\mms \alpha)=\mms \mathbf{Re}(\alpha)\).
    \item\label{2} \(\displaystyle
      \mathbf{Re}(\alpha)=\bm{2}^{-1}\!\bullet(\alpha \pps \myov{\alpha}),\qquad
      \mathbf{Im}(\alpha)=(\bm{2}\bullet \mathbf{i})^{-1}\!\bullet(\alpha \mms \myov{\alpha}).\)
    \item\label{3} \(\displaystyle
      \alpha=\mathbf{Re}(\alpha)\pps \mathbf{i}\bullet \mathbf{Im}(\alpha).\)
    \item\label{4} \(\displaystyle
      \|\mathbf{Re}(\alpha)\|\;\le\;\|\alpha\|.\) 
    \item\label{5} If \(\alpha\in\mathbb{R}\) and \(\beta\in\mathbb{C}\), then
      \(\mathbf{Re}(\alpha\bullet \beta)=\alpha\bullet \mathbf{Re}(\beta)\).
\end{enumerate}
\end{properties}

We now recall the notion of a near-vector space, together with the auxiliary concepts required for its definition.

\begin{defin}
    \cite[Definition 1.4]{MarquesMoore}
\begin{enumerate}    
\vspace{0cm}  \item Given a scalar group $F$, we say that a {\sf (left) $F$-space} is a pair $(V, \mu)$ where $V$ is an abelian additive group and $\mu: F \times V \rightarrow V$ is a (left) scalar group action, that is \(\mu\) is a (left) monoid action of $(F,\cdot)$ on $(V,+)$ on the (left) by endomorphisms, such that $0$ acts trivially, and $\pm 1$ act as $\pm {\sf id}$. 
\item We say that $W$ is an {\sf $F$-subspace} of $V$ if $W$ is a nonempty subset of $V$ that is closed under addition and scalar multiplication.
\end{enumerate}
\end{defin}

\begin{defin}\cite[Definition 1.5]{MarquesMoore}
\label{quasikernel}
Let $V$ be an $F$-space. We define the {\sf quasi-kernel} of $V$ to be
$$Q(V) = \{u \in V \mid \forall {\alpha, \beta \in F}, \exists {\gamma \in F} \,[\alpha \cdot u + \beta \cdot u = \gamma \cdot u]\}.$$
\end{defin}

\begin{defin}\cite[Definition 1.6]{MarquesMoore}
\label{NVS2}
A {\sf (left) near-vector space over a scalar group $F$} is a triple \((V, F, \mu)\) that satisfies the following properties:
\begin{enumerate}
\vspace{0cm} \item \((V, \mu)\) is an $F$-space; 
    \item the (left) scalar group action \(\mu\) is free;
    \item the quasi-kernel \(Q(V)\) generates \(V\) as an additive group.
\end{enumerate}
\vspace{0cm} 
Any trivial abelian group can be endowed with a near-vector space structure through the trivial action. Such a space is referred to as a {\sf (left) trivial near-vector space over $F$}, denoted as \(\{0\}\). We call the elements of $V$ {\sf vectors}.
\end{defin}
\begin{defin}
    Let   $((V,+_{V}),\cdot_{V})$ be a near-vector space over $(F, \cdot)$.
     Then, for all $\alpha, \beta \in F$, we define $\alpha -_{V} \beta$ to be $\alpha +_{V} (-\beta)$.
\end{defin}

\begin{defin}\cite[Definition 4.7]{Andre}
\label{compatible}
Let $V$ be an $F$-space.
Elements $u,v \in Q(V)^{*}$ are called {\sf compatible} if there is $\lambda \in F^{*}$ such that $u + \lambda v \in Q(V)$.
\end{defin}

\begin{defin}\cite[Definition 4.11]{Andre}
\label{regulardefinition}
    A near-vector space $(V,F)$ is called {\sf regular} if any two elements $u,v \in Q(V)^{*}$ are compatible (in the sense of Definition \ref{compatible}). 
\end{defin}

We now introduce the notion of a \emph{strongly regular} near-vector space. 
Strong regularity will serve as a standing hypothesis for the inner product theory developed in the next section: in particular, it is a necessary condition for the existence of inner products in the sense defined there. Further details will be discussed below.

\begin{defin}
 We say that a near-vector space $V$ over $F$ is {\sf strongly regular} if $Q(V)=V$. 
\end{defin}

\begin{rem}
\label{strongly}
Suppose $V$ is a strongly regular near-vector space over $F$. Then:
\vspace{0cm}\begin{enumerate}
\item $+_u = +_v$, for all $u,v\in Q(V)$. This is by \cite[Theorem 4.2]{HowMar}.
    \item 
 $V$ is regular. Indeed, by 1. and \cite[Hilfssatz 4.8]{Andre}, taking $\lambda = 1$, we get our desired result;
\end{enumerate}
\end{rem}
Several types of bases can be defined for a near-vector space; we briefly recall their definitions.
\begin{defin}\cite[Definition 3.2]{Howellspanning}
Consider an \(F\)-space \(V\). We define the {\sf span of a set \(S\)} as the intersection \(W\) of all non-empty subsets of \(V\) that are closed under addition and scalar multiplication and contain the set \(S\). This span is denoted as \(\operatorname{Span}(S)\).
\end{defin}
\vspace{0cm}We write \(A \finsub B\) to indicate that \(A\) is a finite subset of \(B\). (see \cite[Definition 1.13]{MarquesMoore}).

\begin{lemm}\cite[Lemma 2.3]{MarquesMoore}
\label{spanlin}
Let \(V\) be an \(F\)-space, and \(S\subseteq V\). Then, the set \(\operatorname{Span} (S)\) can be expressed as follows:
\begin{align*}
     \operatorname{Span} (S)= & \left\{\sum_{a \in A} \left(\sum_{i=1}^{n_a} \alpha_{a,i} \cdot a \right) \middle| A \finsub S, n_a \in \mathbb{N}, \alpha_{a,i} \in F, \forall a\in A, \forall i \in \{ 1, \cdots ,n_a\}  \right\}\\
= & \sum_{s\in S}   \operatorname{Span} (s).
\end{align*}
\end{lemm}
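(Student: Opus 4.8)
The plan is to prove both equalities in Lemma~\ref{spanlin} by double inclusion, using that $\operatorname{Span}(S)$ is the smallest subset of $V$ closed under addition and scalar multiplication and containing $S$.

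First I would show that the set
\[
T:=\left\{\sum_{a \in A} \left(\sum_{i=1}^{n_a} \alpha_{a,i} \cdot a \right) \;\middle|\; A \finsub S,\ n_a \in \mathbb{N},\ \alpha_{a,i} \in F \right\}
\]
is contained in $\operatorname{Span}(S)$. This is the easy direction: each generator $a\in S$ lies in $\operatorname{Span}(S)$; since $\operatorname{Span}(S)$ is closed under scalar multiplication, each $\alpha_{a,i}\cdot a$ lies in it; and since it is closed under addition, any finite sum of such terms lies in it. Hence $T\subseteq \operatorname{Span}(S)$. (One small point to handle: the empty sum, i.e.\ $A=\es$. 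If $S\neq\es$ this is not needed since every element of $\operatorname{Span}(S)$ is nonempty and contains $S$; if $S=\es$ one checks separately that $\operatorname{Span}(\es)$ is the smallest additively-and-scalar-closed nonempty subset, and compares with the convention for $T$ — I would simply note the statement is understood for $S\neq\es$, or that the sum over $A=\es$ is to be read as $0$ and $\{0\}$ is indeed the span.)

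For the reverse inclusion $\operatorname{Span}(S)\subseteq T$, the key step is to verify that $T$ itself is a nonempty subset of $V$ that is closed under addition and scalar multiplication and contains $S$; by minimality of $\operatorname{Span}(S)$ this forces $\operatorname{Span}(S)\subseteq T$. Nonemptiness and $S\subseteq T$ are immediate (take $A=\{a\}$, $n_a=1$, $\alpha_{a,1}=1$, using that $1$ acts as the identity). Closure under addition is clear: given two elements of $T$ with index sets $A,A'$, their sum is indexed by $A\cup A'$, combining the inner sums on the overlap. Closure under scalar multiplication is where care is needed, because scalar multiplication in a near-vector space need \emph{not} distribute over addition in general. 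However, one only needs to push a single scalar $\beta\in F$ through: $\beta\cdot\big(\sum_{a\in A}\sum_{i=1}^{n_a}\alpha_{a,i}\cdot a\big)$. Since $\mu$ is a monoid action by \emph{endomorphisms} of $(V,+)$, the map $v\mapsto \beta\cdot v$ is additive, so it distributes over the (outer and inner) sums, giving $\sum_{a\in A}\sum_{i=1}^{n_a}\beta\cdot(\alpha_{a,i}\cdot a)=\sum_{a\in A}\sum_{i=1}^{n_a}(\beta\alpha_{a,i})\cdot a$, again an element of $T$. This is the main (and only mildly subtle) obstacle: remembering that it is additivity of the action map, not distributivity in the scalar variable, that is being used.

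Finally, for the second equality $\operatorname{Span}(S)=\sum_{s\in S}\operatorname{Span}(s)$, I would interpret the right-hand side as $\{\sum_{s\in A} w_s \mid A\finsub S,\ w_s\in\operatorname{Span}(s)\}$ and match it against the description just obtained. The inclusion $\sum_{s\in S}\operatorname{Span}(s)\subseteq\operatorname{Span}(S)$ follows since $\operatorname{Span}(s)\subseteq\operatorname{Span}(S)$ for each $s\in S$ and $\operatorname{Span}(S)$ is closed under finite sums. For the reverse, each element of $T$ is of the form $\sum_{a\in A}w_a$ with $w_a=\sum_{i=1}^{n_a}\alpha_{a,i}\cdot a\in\operatorname{Span}(\{a\})=\operatorname{Span}(a)$ by the first equality applied to the singleton $S=\{a\}$; hence $T\subseteq\sum_{s\in S}\operatorname{Span}(s)$. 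Combining the three inclusions yields both asserted equalities.
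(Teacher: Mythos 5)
The paper states this lemma as a citation of \cite[Lemma 2.3]{MarquesMoore} and gives no proof of its own, so there is nothing internal to compare against; your double-inclusion argument is correct and is the standard one for this statement. In particular, you correctly identify the one point that needs care in the $F$-space setting: closure of the set $T$ under scalar multiplication follows from the action being by endomorphisms of $(V,+)$ (additivity of $v\mapsto\beta\cdot v$ together with $\beta\cdot(\alpha\cdot a)=(\beta\alpha)\cdot a$), not from any distributivity in the scalar variable, which is also exactly why the inner sums $\sum_{i=1}^{n_a}\alpha_{a,i}\cdot a$ cannot be collapsed to a single term.
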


\begin{defin}\cite[Definition 2.6]{MarquesMoore}
\label{linindep} 
In an \(F\)-space \(V\), let \(S \subseteq V\). \(S\) is considered {\sf linearly independent} if \(0 \notin S\) and for any non-empty subset \(A \finsub S\), along with \(n_a\in \mathbb{N}\) and \(\alpha_{a,i} \in F\) where \(a \in A\) and \(i \in \{ 1, \cdots , n_a\}\), if \(\sum_{a \in A} \left( \sum_{i=1}^{n_a} \alpha_{a,i} \cdot a \right) = 0\), then \(\sum_{i=1}^{n_a} \alpha_{a,i}\cdot a = 0\) for all \(a \in A\).   
\end{defin}
\begin{defin}\cite[Definition 2.2]{MarquesMoore}
Let \(V\) be an \(F\)-space and \(W\) be an \(F\)-subspace of \(V\). A subset \(S\) of \(V\) is called a {\sf generating set for \(W\)} if \(W \subseteq  \operatorname{Span}(S)\). 
\end{defin}
\begin{defin}\cite[Definition 2.14]{MarquesMoore}
\label{basis}
Let \(V\) be an \(F\)-space. 
\begin{enumerate} 
\item The set \(S\) is called an {\sf $F$-basis for \(V\)} (or simply {\sf basis} when there is no confusion) if \(S \subseteq V\), \(S\) is a generating set for \(V\), and \(S\) is  linearly independent. 
\item A {\sf scalar \(F\)-basis} (or simply {\sf scalar basis} when there is no confusion) is a set that is a basis whose elements are scalar. 
\item $V$ is said to be {\sf finite-dimensional} if $V$ admits a finite scalar $F$-basis.
\item When \(V\) is a finite-dimensional near-vector space, we say that a basis \(B\) is a {\sf minimal (resp. maximal) basis for \(V\)}, if \(|B|= \operatorname{min} \{ |C| \mid C \text{ basis of } V\}\) (resp. \(|B|= \operatorname{max} \{ |C| \mid C \text{ basis of } V\}\)). We define the minimal (resp.\ maximal) dimension of a near-vector space to be the cardinality of a minimal (resp.\ maximal) \(F\)-basis (when these extrema exist).
\end{enumerate}
\end{defin}
For completeness, we recall the general notion of isomorphism in the setting of near-vector spaces.
\begin{defin}{\cite[Definition 3.2]{HowMey}}\label{homo}
Let $((V_{1},+_{1}),\cdot_1)$ and $((V_{2},+_{2}),\cdot_2)$ be near-vector spaces over $(F_{1},*_{1})$ and over $(F_{2}, *_{2})$ respectively.
\begin{enumerate}
   \vspace{0cm} 
    \item A pair \((\theta, \eta)\) is called a {\sf homomorphism} of near-vector spaces if  \(\theta : (V_1, +_{1}) \rightarrow (V_2, +_{2})\) is an additive homomorphism and  \(\eta : (F_1^{*}, *_{1}) \rightarrow (F_2^{*}, *_{2})\) is a monoid isomorphism such that \(\theta(\alpha \cdot_{1} u) = \eta(\alpha) \cdot_{2} \theta(u)\) for all \( u \in V_1 \) and \(\alpha \in F_1^{*}\). A pair \((\theta, \eta)\) is called a {\sf isomomorphism}  of near-vector spaces if \((\theta, \eta)\) is a homomorphism of near-vector spaces and \( \theta \) is an isomorphism. 
    \item If \(\eta = \operatorname{Id}\), we say that \(\theta\) is an \( F \)-{\sf linear map} where $F = F_{1} = F_{2}$. 
    \item If \(\eta = \operatorname{Id}\) and $\theta$ is a bijection, we say that \(\theta\) is an {\sf \( F \)-isomorphism} of near-vector spaces. 
\end{enumerate}
\end{defin}
For completeness, we review the notion of multiplicative near-vector spaces; this will be the primary setting in which our applications are formulated.
\begin{definlem}\cite[Definition-Lemma 3.11]{LSD2025}
\label{multautnvs}
Let \((F, +, \cdot)\) be a near-field, and let \(I\) be an index set. Consider two tuples of multiplicative automorphisms, \(\boldsymbol{\sigma} = (\sigma_i)_{i \in I}\) and \(\boldsymbol{\rho} = (\rho_i)_{i \in I}\).  
\begin{enumerate} 
    \vspace{0cm} \item We define the {\sf multiplicative near-vector space associated with \((\boldsymbol{\sigma}, \boldsymbol{\rho})\)}, denoted by \(F^{\boldsymbol{\sigma}, \boldsymbol{\rho}}\), as the near-vector space  
    \[
    F^{\boldsymbol{\sigma}, \boldsymbol{\rho}} = \left\{ (\alpha_i)_{i\in I} \in \prod_{i\in I} F^{\sigma_i, \rho_i} \mid (\alpha_i)_{i\in I} \asymp \mathbf{0} \right\}.
    \]  
    where the near-vector space structure is given componentwise by $F^{\sigma_i, \rho_i}$, for all $i \in I$. Additionally, for \( k \in \{1, \dots, n\} \), we define  
\[
 {}^{\boldsymbol{\sigma} \! \! }\sum_{k=1}^{n}(\alpha_{k_i})_{i \in I} = \left( {}^{\sigma_{i}\! \! }\sum_{k=1}^{n} \alpha_{k_i}\right)_{i \in I},
\]  
and we adopt the convention \( F^{0,0} = \{0\} \).
    \item The {\sf canonical basis of \(F^{\boldsymbol{\sigma}, \boldsymbol{\rho}}\)} is given by the set \(\mathcal{E} = \{ {\bf e}_j \mid j \in I\}\), where \({\bf e}_j = (\delta_{j,i})_{i\in I}\). For all $j \in I$ and $\gamma \in F^{*}$, we have
    $$+_{\gamma \cdot_{\boldsymbol{\rho}} {\bf e}_j}=+_{\sigma_{j} \circ \rho_{j}\circ \varphi_{\gamma}  }.$$
\end{enumerate}
\end{definlem}

\begin{defin}\cite[Definition 3.15]{LSD2025}
\label{multNVS}
     Let $(F,+,\cdot)$ be a near-field. We say that a near-vector space $V$ over $F$ is {\sf multiplicative } if it is $F$-isomorphic as near-vector spaces to $(F^{\boldsymbol{\sigma},\boldsymbol{\rho}},F)$ for some tuples of multiplicative automorphisms $\boldsymbol{\sigma} = (\sigma_i )_{i\in I}$ and $\boldsymbol{\rho}=( \rho_i )_{i\in I}$. 
\end{defin}

\section{Defining inner products and norms on strongly regular near-vector spaces}

We begin this section by introducing an inner product and a norm in the setting of strongly regular near-vector spaces.


\begin{defin}
\label{innerproductnvs}
    Let $((F,\pps,\bullet), \psi)$ be an $F$-like line and $((V,+_{V}),\cdot_{V})$ be a strongly regular near-vector space over $(F,\cdot)$. We say that the map 
    \begin{equation*}
        \langle - , - \rangle: V \times V \rightarrow F
    \end{equation*}
    is an {\sf inner product on $(((V,+_{V}),\cdot_{V}), (F, \cdot))$ over $(F,\pps,\bullet)$} if the following assertions hold for all $\alpha \in F$ and $u, v,w \in V$: \vspace{0cm}
    \begin{enumerate}
        \item[$I1.$] $\langle u,v \rangle = \myov{\langle v,u\rangle}$;
        \item[$I2.$] $\langle \alpha \cdot_{V}u, v\rangle = \alpha \bullet\langle u,v\rangle$;
        \item[$I3.$] $\langle u+_{V}v,w\rangle = \langle u,w\rangle\pps \langle v, w \rangle$;
        \item[$I4.$] $\langle u,u \rangle \in \mathbb{R}^{\pps}$, for all $u\in V^*$;
        \item[$I5.$] $\langle u, u \rangle = 0$ if and only if $u=0$. 
    \end{enumerate}\vspace{0cm} 
     We say that $V$ together with $\langle -,- \rangle$ is an {\sf inner product near-space over $(F,\pps,\bullet).$}
     When only (I1)--(I4) are satisfied, we say that $\langle -,- \rangle$ is an {\sf indefinite inner product on $(((V,+_V),\cdot_V),(F,\cdot))$ over $(F, \pps, \bullet)$}.

\end{defin}

In the following example, instead of writing  $+_{\epsilon_{p}}$, we simply write $+_{p}$.

\begin{exam}
\label{inner_example}
    Let $F \in \{\mathbb{R},\mathbb{C}\}$, $(F,\cdot)$ be a scalar group and $((F,+_{5},\cdot), \epsilon_{5^{-1}})$ be the $F$-like line as defined in Example \ref{exam1}. We define an action of endomorphism of $F$ on $F^{2}$ as follows:
    \[ \alpha \odot (u_1,u_2) = (\alpha \cdot u_1, \alpha^{3}\cdot u_2) \quad (\alpha \in F, (u_1,u_2) \in F^{2}).\]
    We also define a new addition on $F^{2}$ as follows:
    \[ (u_1,u_2) \oplus (v_1,v_2) = (u_1+v_1, (u_2^{1/3}+v_2^{1/3})^{3})\quad ((u_1,u_2),(v_1,v_2) \in F^{2}).\]
    Then, 
    $((F^{2},\oplus),\odot)$ is a strongly regular near-vector space over $(F,\cdot)$. We can define an inner product $\langle -,- \rangle$ on $(((F^{2},\oplus),\odot),(F,\cdot))$ over $(F,+_5,\cdot)$ as follows: 
    \[ \langle (u_1,u_2),(v_1,v_2) \rangle = u_1\cdot \overline{v_1} +_{5} (u_2\cdot \overline{v_2})^{1/3} \quad ((u_1,u_2),(v_1,v_2) \in F^{2}).\]
\end{exam}
\begin{rem}
The strong regularity hypothesis on \(\big((V,+_{V}),\cdot_{V}\big)\) over \((F,\cdot)\) is a
necessary condition for the existence of an inner product (in the sense adopted here).
Indeed, let \(\alpha,\beta\in F\) and \(u\in Q(V)^{\ast}\). By (I2)–(I3) we have
\begin{align*}
\big\langle \alpha\cdot_{V}u +_{V} \beta\cdot_{V}u,\, u \big\rangle
&= \big\langle \alpha\cdot_{V}u,\,u \big\rangle \;\pps\; \big\langle \beta\cdot_{V}u,\,u \big\rangle \\[2pt]
&= \alpha \bullet \langle u,u\rangle \;\pps\; \beta \bullet \langle u,u\rangle \\[2pt]
&= (\alpha \pps \beta)\,\bullet \langle u,u\rangle,
\end{align*}
where the last equality uses the distributivity of \(\bullet\) over \(\pps\).
On the other hand, by the defining property of \(+_{u}\) and again (I2)–(I3),
\begin{align*}
\big\langle \alpha\cdot_{V}u +_{V} \beta\cdot_{V}u,\, u \big\rangle
= \big\langle (\alpha +_{u} \beta)\cdot_{V}u,\, u \big\rangle
= (\alpha +_{u} \beta)\,\bullet \langle u,u\rangle.
\end{align*}
Since \(\langle u,u\rangle\) is invertible by (I5), we can cancel $\langle u,u \rangle$ to obtain
\[
\alpha \pps \beta \;=\; \alpha +_{u} \beta \qquad \text{for all }\alpha,\beta\in F.
\]
Thus, \(+_{u}\) coincides with \(\pps\) for every \(u\in Q(V)^{\ast}\); in particular, the induced
addition is independent of \(u\), i.e.\ \(V\) is strongly regular. This shows that strong
regularity is necessary for the existence of an inner product in our setting.
\end{rem}

We record several immediate consequences of the inner–product axioms. 
In the proofs of \ref{inner1} and \ref{inner5} we use Properties~\ref{properties} ~\ref{preserve}. 
The remaining verifications parallel the classical case; see, e.g., \cite[p.~129]{Kreyszig}.
\begin{properties}\label{innerprodproperties}
Let $((F,\pps,\bullet), \psi)$ be an $F$-like line and $(V, \langle -,- \rangle)$ be an inner product near-space over $(F, \pps,\bullet)$ as defined in Definition \ref{innerproductnvs}. Let $\alpha, \beta \in F$ and $u,v,w \in V$. Then: 
\begin{multicols}{2}
\begin{enumerate}
    \item\label{inner1}  $\langle u, \alpha \cdot_{V} v\rangle = \myov{\alpha} \bullet\langle u,v\rangle$; 
    \item\label{inner2} $\langle u,w+_{V}v\rangle = \langle u,w\rangle\pps \langle u, v\rangle$;
    \item\label{inner3} $\langle \alpha \cdot_V u+_{V} \beta\cdot_V v,w\rangle = \alpha \bullet \langle u,w\rangle\pps \beta \bullet \langle v, w \rangle$;
    \item\label{inner4} $\langle u,\alpha \cdot_V w+_{V} \beta\cdot_V v\rangle = \alpha \bullet \langle u,w\rangle\pps \beta \bullet \langle u, v\rangle$; 
    
    \item\label{inner5} $\langle u,0\rangle = \langle 0,u\rangle = 0$.  
\end{enumerate}
\end{multicols}
\end{properties}

\vspace{-0.5cm}\noindent We now define a norm for strongly regular near-vector spaces.

\begin{defin}
\label{NVSnorm}
    Let $((F,\pps,\bullet), \psi)$ be an $F$-like line. Let $((V,+_{V}),\cdot_{V})$ be a strongly regular near-vector space over $(F,\cdot)$. 
    We say that the map
$\|-\|: V \rightarrow \mathbb{R}$ is a {\sf norm on $((V,+_{V}),\cdot_{V})$ over $(F,\pps,\bullet)$} if the following conditions hold for all $\alpha \in F$ and $u,v \in V$: 
\begin{enumerate}\vspace{0cm}
    \item[$N1.$] $\|\alpha \cdot_{V} u\| = \norm{\alpha}\bullet\|u\|$;
    \item[$N2.$] $\|u+_{V}v\| \pmb{\leq} \|u\| \pps \|v\|$;
    \item[$N3.$] $\|u\| \in \mathbb{R}^{\pps}$;
    \item[$N4.$] $\| u\| = 0$ if and only if $u = 0$. 
   \vspace{0cm}\end{enumerate}
    When only (N1)-(N3) are satisfied, we say that $\|-\|$ is a {\sf semi-norm on $((V,+_{V}),\cdot_{V})$ over $(F,\pps,\bullet)$}.

\end{defin}

The following properties of the norm are proven exactly as in \cite[Theorem 6.4]{LA}, using the field isomorphism to transport the classical arguments.
\begin{properties}\label{normproperties}
Let $((F,\pps,\bullet), \psi)$ be an $F$-like line, $((V,+_{V}),\cdot_{V})$ be a near-vector space over $(F,\cdot)$ and $(V,\|-\|)$ be a normed near-space over $(F,\pps,\bullet)$. Let $\alpha, \beta \in F$ and $u,v \in V$. Then:
\begin{enumerate}
    \item\label{norm1} $\|u\|\mms\|v\|  \pmb{\leq}  \|u-_{V}v\|$. This follows by $N2.$ and Properties \ref{properties} \ref{psileq};
    \item\label{norm2} $ \|\alpha \cdot_{V} u +_{V} \beta \cdot_{V} v \| \pmb{\leq} \norm{\alpha} \bullet \|u\|\pps \norm{\beta}\bullet\|v\|$. This is by $N1.$ and $N2.$
\end{enumerate}
\end{properties}
\section{From inner products to norms (and vice versa) on strongly regular near-vector spaces}

In the classical setting it is well known that every inner product induces a norm and,
conversely, that every norm satisfying the parallelogram identity arises from a unique
inner product (via the polarization identities). The aim of this section is to establish
the corresponding statements in our framework.

We work over an \(F\)\nobreakdash-like line \(((F,\pps,\bullet),\psi)\) and a strongly regular
near-vector space \(((V,+_{V}),\cdot_{V})\) over \((F,\pps,\bullet)\).
We state only the results needed later, omitting proofs, since the classical arguments
carry over verbatim upon transporting the structure along the field isomorphism \(\psi\).

More precisely, given an inner product \(\langle-,-\rangle\) on \(V\), the map
\[
\|u\|\;:=\;\langle u,u\rangle^{\bm{2^{-1}}}
\]
defines a norm on \(V\). 

The key ingredients are the Cauchy–Schwarz inequality and the
triangle inequality in this setting, which we verify below.

 The following properties can be proven in a similar way as in \cite[Lemma 6.5]{LA}, taking into account the fact that $\psi$ is an isomorphism in Definition \ref{line}. 
   Consider $\alpha \in F$ and $u,v \in V$. Then, 
        
        \begin{enumerate}
            \item $\langle u\pm_{V}\alpha\cdot_{V}v,u \pm_{V}\alpha\cdot_{V}v\rangle = \langle u,u \rangle \pmb{\pm} \bm{2}\bullet \mathbf{Re}(\alpha \bullet \myov{\langle u,v \rangle}) \pps   \norm{\alpha}^{\bm{2}} \bullet  \langle v,v \rangle$
            \item
        $\langle u+_{V}v, u+_{V}v\rangle\pps\langle u-_{V}v, u-_{V}v \rangle = \bm{2}\bullet\langle u, u \rangle \pps \bm{2}\bullet\langle v,v\rangle$ (the parallelogram law).
        \end{enumerate}

\noindent The parallelogram law is a classical result in vector spaces (see \cite[Chapter~6, p.~134]{lang}).
It is used to show that an \(F\)-like line–type inequality holds for inner product near-spaces.
The proof can be carried out in the same way as in the classical case; see, for example,
\cite[p.~137]{Kreyszig}.

To prove the Cauchy–Schwarz inequality in our setting, we follow the classical argument (for example, see \cite[Lemma 3.2-1]{Kreyszig}), transporting the field operations along the isomorphism \(\psi\).

\begin{theo}[Cauchy-Schwarz Inequality]
\label{cauchy}
     Let $((F,\pps,\bullet), \psi)$ be an $F$-like line and $((V,+_V),\cdot_V)$ is a strongly regular near-vector space over $(F,\cdot)$ with inner product $\langle -,- \rangle$ over $(F,\pps,\bullet)$.
 Then for every $u,v \in V$, 
     \[
   \norm{\langle u,v \rangle}^{\bm{2}} \pmb{\leq} \langle u,u \rangle \bullet \langle v,v\rangle.
     \]
 \end{theo}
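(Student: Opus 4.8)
The plan is to transport the classical proof of Cauchy--Schwarz through the field isomorphism $\psi$, just as was done for the earlier properties. First I would dispose of the degenerate case $v=0$: by Properties~\ref{innerprodproperties}~\ref{inner5} both sides vanish, so the inequality holds trivially. Assume then $v\in V^{*}$, so that by (I4) the scalar $\langle v,v\rangle$ lies in $\mathbb{R}^{\pps}$ and in particular is invertible in $(F,\pps,\bullet)$.

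Next I would run the usual ``complete the square'' argument in the $F$-like line. Set $\lambda := \langle v,v\rangle^{-1}\bullet\langle u,v\rangle \in F$ (inverse taken with respect to $\bullet$), and consider the vector $w := u -_{V} \lambda\cdot_{V} v$. By axiom (I4) applied to $w$ (or (I5) if $w=0$), we have $\langle w,w\rangle \in \mathbb{R}^{\pps}$, hence $0 \pmb{\leq} \langle w,w\rangle$. Expanding $\langle u -_{V}\lambda\cdot_{V}v,\, u -_{V}\lambda\cdot_{V}v\rangle$ using the bilinearity/conjugate-symmetry consequences in Properties~\ref{innerprodproperties} (items \ref{inner3} and \ref{inner4}), together with $\langle u,v\rangle = \myov{\langle v,u\rangle}$ from (I1) and Properties~\ref{properties}~\ref{preserve}, \ref{double}, \ref{square}, one obtains
\[
0 \;\pmb{\leq}\; \langle w,w\rangle \;=\; \langle u,u\rangle \;\mms\; \langle v,v\rangle^{-1}\bullet\norm{\langle u,v\rangle}^{\bm{2}}.
\]
Here the cross terms collapse to $\mms\,\bm{2}\bullet\langle v,v\rangle^{-1}\bullet\norm{\langle u,v\rangle}^{\bm{2}}$ and the last term, using $\norm{\lambda}^{\bm{2}}\bullet\langle v,v\rangle = \langle v,v\rangle^{-1}\bullet\norm{\langle u,v\rangle}^{\bm{2}}$ (via Properties~\ref{properties}~\ref{triangleineq}, \ref{square}), contributes $+\langle v,v\rangle^{-1}\bullet\norm{\langle u,v\rangle}^{\bm{2}}$, so the two combine to a single negative copy. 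Rearranging the displayed inequality using the order properties (Properties~\ref{properties}~\ref{leq1}: multiply through by $\langle v,v\rangle \in \mathbb{R}^{\pps}$, which preserves $\pmb{\leq}$) yields $\norm{\langle u,v\rangle}^{\bm{2}} \pmb{\leq} \langle u,u\rangle\bullet\langle v,v\rangle$, as desired.

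The only genuine care points — none of them deep — are bookkeeping ones: (i) checking that $\langle u,v\rangle = 0 $ is handled uniformly, so that $\lambda$ makes sense precisely when $\langle v,v\rangle$ is invertible, which is exactly $v\neq 0$; (ii) keeping the two operations $\pps$ and $\bullet$ straight and invoking distributivity of $\bullet$ over $\pps$ at each expansion step; and (iii) justifying that multiplying an inequality $0\pmb{\leq} x$ by the element $\langle v,v\rangle\in\mathbb{R}^{\pps}$ preserves $\pmb{\leq}$ — this is Properties~\ref{properties}~\ref{leq1} applied with $\gamma = \langle v,v\rangle$, after noting $\langle v,v\rangle\in\mathbb{R}^{\pps}$ means $\langle v,v\rangle \pmb{\geq} 0$ and that $\mathbb{R}^{\pps}$ is closed under $\bullet$. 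The main (mild) obstacle is simply verifying the expansion identity $\langle u -_{V}\lambda\cdot_{V}v, u -_{V}\lambda\cdot_{V}v\rangle = \langle u,u\rangle \mms \bm{2}\bullet\mathbf{Re}(\lambda\bullet\myov{\langle u,v\rangle}) \pps \norm{\lambda}^{\bm{2}}\bullet\langle v,v\rangle$ — which is precisely item~(1) of the unnumbered properties list just before the theorem — and then observing that with our choice of $\lambda$ the middle term equals $\bm{2}\bullet\langle v,v\rangle^{-1}\bullet\norm{\langle u,v\rangle}^{\bm{2}}$ because $\lambda\bullet\myov{\langle u,v\rangle} = \langle v,v\rangle^{-1}\bullet\norm{\langle u,v\rangle}^{\bm{2}}$ already lies in $\mathbb{R}^{\pps}$ and hence equals its own real part.
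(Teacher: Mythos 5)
Your proposal is correct and is exactly the argument the paper intends: the paper gives no written proof, deferring to the classical ``complete the square'' argument of Kreyszig transported along $\psi$, and you have supplied precisely that, with the right care points (invertibility of $\langle v,v\rangle$ for $v\neq 0$ via (I4), the expansion identity from the unnumbered properties list, and order-preservation under $\bullet$ by elements of $\mathbb{R}^{\pps}$). The computation of the cross terms and the final rearrangement check out.
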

\noindent Using the Cauchy–Schwarz inequality, the triangle inequality follows exactly as in the classical
vector-space case (see \cite[p.~138]{Kreyszig}), upon transporting the field operations along the
isomorphism \(\psi\).

 \begin{theo}[Triangle Inequality]
 \label{triangle}
 Let $((F,\pps,\bullet), \psi)$ be an $F$-like line, $((V,+_{V}),\cdot_{V})$ be a near-vector space over $(F,\cdot)$ and $(V, \langle -,- \rangle$) be an inner product near-space over $(F,\pps,\bullet)$. Then for every $u,v \in V$, 
     \[
     \langle u+_{V} v, u+_{V} v\rangle^{\bm{2^{-1}}} \pmb{\leq} \langle u,u\rangle^{\bm{2^{-1}}} \pps \langle v,v \rangle^{\bm{2^{-1}}}.
     \]
 \end{theo}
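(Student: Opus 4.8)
\textbf{Proof proposal for the Triangle Inequality (Theorem~\ref{triangle}).}

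The plan is to mimic the classical derivation of the triangle inequality from Cauchy--Schwarz, transporting every step along the field isomorphism $\psi$. First I would expand the inner product of $u+_V v$ with itself using bilinearity in the $F$-like line, i.e.\ using Properties~\ref{innerprodproperties}~\ref{inner3} and \ref{inner4} together with $I3$. This yields
\[
\langle u+_V v, u+_V v\rangle = \langle u,u\rangle \pps \langle u,v\rangle \pps \langle v,u\rangle \pps \langle v,v\rangle.
\]
By $I1$, $\langle v,u\rangle = \myov{\langle u,v\rangle}$, so the two cross terms sum to $\langle u,v\rangle \pps \myov{\langle u,v\rangle} = \bm{2}\bullet \mathbf{Re}(\langle u,v\rangle)$ by Properties~\ref{properties2}~\ref{2}. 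Hence
\[
\langle u+_V v, u+_V v\rangle = \langle u,u\rangle \pps \bm{2}\bullet\mathbf{Re}(\langle u,v\rangle) \pps \langle v,v\rangle.
\]

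Next I would bound the cross term. By Properties~\ref{properties2}~\ref{4} we have $\norm{\mathbf{Re}(\langle u,v\rangle)} \pmb{\leq} \norm{\langle u,v\rangle}$, and since $\mathbf{Re}(\langle u,v\rangle)\in\mathbb{R}$ it is $\pmb{\leq}$ its own norm; combining, $\mathbf{Re}(\langle u,v\rangle) \pmb{\leq} \norm{\langle u,v\rangle}$. Then Cauchy--Schwarz (Theorem~\ref{cauchy}) gives $\norm{\langle u,v\rangle}^{\bm 2} \pmb{\leq} \langle u,u\rangle\bullet\langle v,v\rangle$, and taking $\bm{2^{-1}}$-th roots via Properties~\ref{properties}~\ref{squareroot} yields $\norm{\langle u,v\rangle} \pmb{\leq} \langle u,u\rangle^{\bm{2^{-1}}}\bullet\langle v,v\rangle^{\bm{2^{-1}}}$ (here one uses that $\norm{-}$ is multiplicative, Properties~\ref{properties}~\ref{triangleineq}, and that $\langle u,u\rangle^{\bm{2^{-1}}}$ is the nonnegative square root). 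Substituting, and using monotonicity of $\bullet$ and $\pps$ with respect to $\pmb{\leq}$ (Properties~\ref{properties}~\ref{leq1} and the additive compatibility of $\pmb{\leq}$), I obtain
\[
\langle u+_V v, u+_V v\rangle \pmb{\leq} \langle u,u\rangle \pps \bm{2}\bullet\bigl(\langle u,u\rangle^{\bm{2^{-1}}}\bullet\langle v,v\rangle^{\bm{2^{-1}}}\bigr) \pps \langle v,v\rangle = \bigl(\langle u,u\rangle^{\bm{2^{-1}}} \pps \langle v,v\rangle^{\bm{2^{-1}}}\bigr)^{\bm 2},
\]
where the last equality is the transported binomial expansion $(a\pps b)^{\bm 2} = a^{\bm 2}\pps \bm{2}\bullet(a\bullet b)\pps b^{\bm 2}$ applied with $a = \langle u,u\rangle^{\bm{2^{-1}}}$, $b=\langle v,v\rangle^{\bm{2^{-1}}}$, together with the fact that $(a^{\bm{2^{-1}}})^{\bm 2}=a$ for $a\in\mathbb{R}^{\pps}$. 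Finally, taking $\bm{2^{-1}}$-th roots once more (Properties~\ref{properties}~\ref{squareroot}), both sides being in $\mathbb{R}^{\pps}$ by $I4$ and $N3$-type positivity, gives the claimed inequality
\[
\langle u+_V v, u+_V v\rangle^{\bm{2^{-1}}} \pmb{\leq} \langle u,u\rangle^{\bm{2^{-1}}} \pps \langle v,v\rangle^{\bm{2^{-1}}}.
\]

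The main obstacle I anticipate is purely bookkeeping rather than conceptual: one must be careful that every intermediate quantity ($\langle u,u\rangle$, $\langle v,v\rangle$, $\norm{\langle u,v\rangle}$, and their $\bm{2^{-1}}$-th roots) lies in $\mathbb{R}^{\pps}$ so that the order-theoretic lemmas (Properties~\ref{properties}~\ref{squareroot}, \ref{leq1}, and \ref{triangleineq}) actually apply, and that the $\bm{2^{-1}}$-th root is genuinely the nonnegative root guaranteed by Definition~\ref{line}. Handling the degenerate case $u=0$ or $v=0$ (where $\langle u,u\rangle^{\bm{2^{-1}}}=0$ by $I5$) is immediate but should be mentioned. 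Apart from that, each step is a verbatim transport of the classical argument along $\psi$, exactly as the paragraph preceding the theorem indicates.
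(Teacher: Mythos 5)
Your proposal is correct and is exactly the argument the paper has in mind: the paper omits the proof, stating that it "follows exactly as in the classical vector-space case (see \cite[p.~138]{Kreyszig}), upon transporting the field operations along the isomorphism $\psi$," and your write-up is precisely that classical expansion–plus–Cauchy–Schwarz argument carried out with the transported operations. The bookkeeping points you flag (all intermediate quantities lying in $\mathbb{R}^{\pps}$ so that the order lemmas apply, and the $\bm{2^{-1}}$-root being the nonnegative one) are the right ones to check, and they all go through.
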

As in the classical setting, every inner product induces a norm. Likewise, on a strongly
regular near-vector space, an inner product gives rise to a norm. The proof (see \cite[p. 136--138]{Kreyszig})
proceeds exactly as in the standard case and uses Theorem~\ref{triangle}; we omit the routine
details.

\begin{definlem}
  \label{normassociated}
    Let $((F,\pps,\bullet), \psi)$ be an $F$-like line. Let $((V,+_{V}),\cdot_{V})$ be a strongly regular near-vector space over $(F,\cdot)$ and $(V, \langle -,- \rangle$) be an inner product near-space over $(F,\pps,\bullet)$. Then, $\langle -,-\rangle^{\bm{2^{-1}}}$ is a norm on $((V,+_{V}),\cdot_{V})$ over $(F,\pps, \bullet)$. We denote such a norm by $\|-\|$.
\end{definlem}

The Jordan--von Neumann theorem \cite{JordanNeumann} proves that any normed space defined an inner spaces as long as the norm satisfies the parallelogram law. 
The proof parallels the classical argument, translated via the field isomorphism of the 
$F$-like line.
\begin{theo}[Characterization of inner product norms]
   Let $((F,\pps,\bullet), \psi)$ be an $F$-like line and $((V,+_V),\cdot_V)$ is a strongly regular near-vector space over $(F,\cdot)$ with a norm $\| -\|$ over $(F,\pps,\bullet)$.
There exists an inner product $\langle -,-\rangle$ on $((V,+_V),\cdot_V)$ over $(F, \pps, \bullet)$ such that $\|u\|=\langle u,u \rangle^{\bm{2}^{\bm{-1}}}$
and with respect to which $V$ is an inner product near-space if and only if
the \emph{parallelogram law} holds:
\[
\|u+_Vv\|^{\bm{2}}\pps \|u-_Vv\|^{\bm{2}}=\bm{2}\bullet \big(\|u\|^{\bm{2}}\pps\|v\|^{\bm{2}}\big)
\quad\text{for all } u,v\in V.
\]
In this case, the inner product is uniquely determined by the norm via the
polarization identity: for any $u,v \in V$, we have
    \begin{enumerate}
        \item When $F = \mathbb{R}$, we have \[\langle u,v \rangle = \bm{4^{-1}} \bullet (\|u+_{V} v{\|^{2}}\mms \|u-_{V} v \|^{2});\]
        \item When $F = \mathbb{C}$, we have  \[\langle u,v \rangle = \bm{4^{-1}}\bullet ^{{}\! \!}\sum_{k=0}^{3}\mathbf{i}^{\bm{-k}}\bullet \|u +_{V} \mathbf{i}^{\bm{k}}\cdot_{V} v\|^{\bm{2}}.\]
    \end{enumerate}
\end{theo}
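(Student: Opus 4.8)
The plan is to transport the classical Jordan--von Neumann argument along the field isomorphism $\psi$, exactly as was done for Cauchy--Schwarz, the triangle inequality, and the norm-from-inner-product construction earlier in this section. The forward direction (``inner product $\Rightarrow$ parallelogram law'') is already recorded above as identity (2) in the list of polarization-type identities following Definition-Lemma~\ref{normassociated}, so nothing new is needed there; I would simply cite it. The substance is the converse: given a norm $\|-\|$ satisfying the parallelogram law, define $\langle-,-\rangle$ by the stated polarization formula (the real case over $\mathbb{R}$, the four-term complex case over $\mathbb{C}$) and verify axioms (I1)--(I5), together with the compatibility $\|u\| = \langle u,u\rangle^{\bm{2^{-1}}}$ and uniqueness.

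First I would treat $F=\mathbb{R}$. Set $\langle u,v\rangle := \bm{4^{-1}}\bullet(\|u+_V v\|^{\bm 2} \mms \|u-_V v\|^{\bm 2})$. Axiom (I4) and the compatibility $\langle u,u\rangle = \|u\|^{\bm 2}$ are immediate from $N4$ and $N3$; (I5) then follows from $N4$; (I1) is trivial since everything lies in $\mathbb{R}$ and $\myov{\cdot}$ fixes $\mathbb{R}$. The additive identity (I3) is the classical computation: one shows $\langle u+_V w, z\rangle \pps \langle u-_V w, z\rangle = \bm 2 \bullet \langle u, z\rangle$ using the parallelogram law twice, deduces $\langle u+_V w,z\rangle = \langle u,z\rangle \pps \langle w,z\rangle$ by a substitution argument (first with $u=w$, then polarizing), and then bootstraps from additivity to $\mathbb{Z}$-homogeneity, to $\mathbf{m^{-1}}$-homogeneity via Properties~\ref{properties}\ref{m}, and finally — this is where the argument genuinely differs from the vector-space case — to $F$-homogeneity $\langle \alpha\cdot_V u, v\rangle = \alpha \bullet \langle u,v\rangle$ for all $\alpha \in F$. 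In the classical proof one uses continuity of scalar multiplication to pass from $\mathbb{Q}$ to $\mathbb{R}$; here I would instead transport through $\psi$: the map $\alpha' \mapsto \psi^{-1}(\langle \psi(\alpha')\cdot_V u, v\rangle)$ and the map $\alpha' \mapsto \psi^{-1}(\psi(\alpha')\bullet\langle u,v\rangle)$ are both additive on $(F,+)=(\mathbb{R},+)$, agree on $\mathbb{Q}$, and one of them is visibly continuous, forcing equality — but this requires knowing the relevant map is continuous, which is not automatic for an arbitrary $F$-like line. I expect this to be the main obstacle, and I would resolve it by noting that $N1$ already gives $\|\alpha\cdot_V u\| = \norm{\alpha}\bullet\|u\|$, so the polarization formula expresses $\langle \alpha\cdot_V u, v\rangle$ in terms of $\norm{\cdot}$ applied to sums, and the $F$-homogeneity reduces to the scalar identity $\norm{\alpha\bullet\beta\pps\gamma} $-type expansions inside the $F$-like line — all of which hold by Properties~\ref{properties} since $\psi$ is a field isomorphism, with no continuity needed once homogeneity over the prime field is in hand and the remaining cases are handled by the algebraic identities $N1$--$N2$ directly rather than by a density argument.

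For $F=\mathbb{C}$, I would write $\langle u,v\rangle = \langle u,v\rangle_{\mathbb{R}} \pps \mathbf{i}\bullet\langle u,v\rangle_{\mathbb{R}}'$ where $\langle u,v\rangle_{\mathbb{R}}$ is the real part (two-term) polarization and $\langle u,v\rangle_{\mathbb{R}}' = \langle u, \mathbf{i}\cdot_V v\rangle_{\mathbb{R}}$, check that the four-term sum $\bm{4^{-1}}\bullet\sum_{k=0}^3 \mathbf{i}^{\bm{-k}}\bullet\|u+_V\mathbf{i}^{\bm k}\cdot_V v\|^{\bm 2}$ unpacks to exactly this using Properties~\ref{invertible} (the identities $\mathbf{i}^{\bm 2}=-1$, $\mathbf{i}^{\bm 3}=\mms\mathbf{i}$, etc.), and then obtain (I1)--(I5) from the real case together with the behaviour of $\mathbf{Re}$, $\mathbf{Im}$ and $\myov{\cdot}$ recorded in Properties~\ref{properties2}; conjugate-symmetry (I1) and conjugate-linearity in the second slot (Properties~\ref{innerprodproperties}\ref{inner1}) come out of the standard manipulation $\langle v,u\rangle_{\mathbb{R}} = \langle u,v\rangle_{\mathbb{R}}$ and $\langle u,\mathbf{i}\cdot_V v\rangle = \mms\mathbf{i}\bullet\langle u,v\rangle$. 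Uniqueness is then forced: any inner product inducing $\|-\|$ must satisfy the polarization identity by identity~(1)/(2) in the list above, so it coincides with the one constructed. Throughout, the only genuinely new verification beyond ``transport the classical proof'' is making sure every passage that classically invoked topology of $\mathbb{R}$ or $\mathbb{C}$ is replaced by an algebraic identity valid in an arbitrary $F$-like line, and I would flag that single point explicitly and handle it via $N1$ as above rather than hiding it inside ``the argument is the same.''
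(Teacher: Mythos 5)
Your overall strategy coincides with the paper's: the paper offers no written proof beyond the remark that ``the proof parallels the classical argument, translated via the field isomorphism of the $F$-like line,'' and your plan --- cite the recorded parallelogram identity for the forward direction, define the form by polarization for the converse, verify (I1)--(I5) by transporting the Jordan--von Neumann computations along $\psi$, and get uniqueness from polarization --- is exactly that. You also correctly isolate the only step where the classical proof is not purely algebraic, namely the passage from homogeneity over the prime field to $\langle \alpha\cdot_V u,v\rangle=\alpha\bullet\langle u,v\rangle$ for all $\alpha\in F$.

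However, your proposed resolution of that step does not work. Axiom $N1$ controls $\|\alpha\cdot_V u\|$ but says nothing about $\|\alpha\cdot_V u+_V v\|$, which is the quantity that actually appears in the polarization formula; there is no ``scalar identity'' expanding the norm of a sum, because such an expansion is precisely the bilinearity you are trying to establish, and the existence of $\mathbb{Q}$-linear, non-$\mathbb{R}$-linear additive maps shows that no purely algebraic bootstrap from prime-field homogeneity can succeed. The gap is closed by performing the transport honestly rather than by avoiding the density argument: set $N:=\psi^{-1}\circ\|-\|$ and $\alpha'\star u:=\psi(\alpha')\cdot_V u$. Strong regularity together with the requirement that an inner product exist forces the induced addition $+_u$ to equal $\pps$ for every $u\in V^{*}$, so $(V,+_V,\star,N)$ is a genuine normed vector space over $(\mathbb{R},+,\cdot)$ (resp.\ $(\mathbb{C},+,\cdot)$) with its usual topology; by $N1$, $N2$ and the reverse triangle inequality (Properties~\ref{normproperties}), the map $t\mapsto N(t\star u+_V v)$ is Lipschitz, hence continuous, and the classical continuity/density argument applies verbatim. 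Pushing the resulting classical inner product forward along $\psi$ then yields the stated form and its uniqueness. In short: you flag the right step, but it must be resolved by continuity after transport, not by algebra.
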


The following figure illustrates the unit circle, centered on different points, under the norm induced by the inner product defined in Example \ref{inner_example}.       
\begin{multicols}{2}
\begin{center}
\includegraphics[scale=0.4]{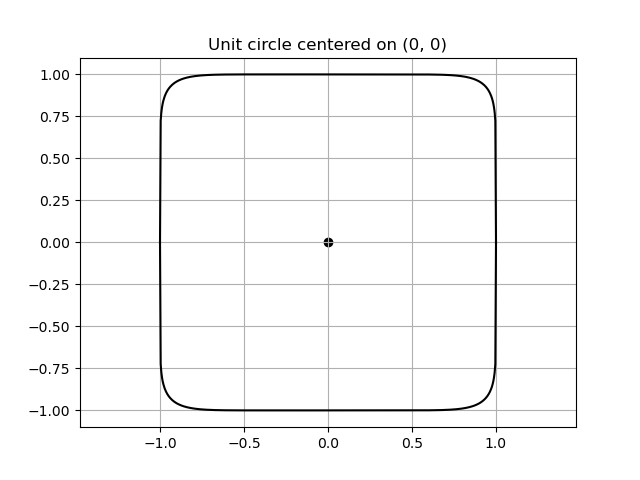}\par
{Unit circle centered on $(0,0)$ }
\end{center}
\columnbreak
\begin{center}
\includegraphics[scale=0.4]{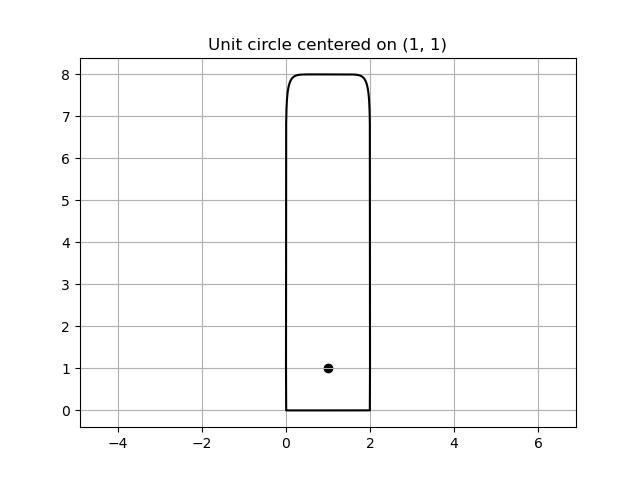}\par
{Unit circle centered on $(1,1)$} 
\end{center}
\end{multicols}

\section{Orthogonality}
The importance of endowing a space with an inner–product structure lies, in particular, in the induced notion of orthogonality. This, in turn, yields orthogonal (and orthonormal) bases and provides a natural generalization of the Pythagorean theorem. This section is devoted to reviewing these foundational results and expressing them in the context of near–vector spaces.
We first formalize the notions of orthogonality and orthonormality within our inner–product near–space setting.
\begin{defin}
Let \(((F,\pps,\bullet),\psi)\) be an \(F\)-like line, \((V,+_{V},\cdot_{V})\) be a strongly regular
near–vector space over \((F,\cdot)\), and \((V,\langle-,-\rangle)\) be an inner–product near–space
over \((F,\pps,\bullet)\).
\begin{itemize}
  \item For \(u,v\in V\), we say that \(u\) and \(v\) are \textsf{orthogonal} if \(\langle u,v\rangle=0\).
        We denote this by \(u\pmb{\perp}v\).
  \item Given \(u_{1},\dots,u_{n}\in V\), the set \(\{u_{1},\dots,u_{n}\}\) is a \textsf{set of orthogonal vectors}
        if \(u_{j}\pmb{\perp} u_{k}\) for all distinct \(j,k\in\{1,\dots,n\}\). It is \textsf{orthonormal} if
        \[
          \langle u_{j},u_{k}\rangle=\delta_{jk}\quad\text{for all }j,k\in\{1,\dots,n\}.
        \]
  \item The set \(\{u_{1},\dots,u_{n}\}\) is an \textsf{orthonormal basis} of \(V\) if it is orthonormal and spans \(V\) as a near-vector space.
\end{itemize}
\end{defin}

The following orthogonality properties remain valid under the transported operations on \(F\),
since \(\psi\) is an isomorphism (equivalently, by (I2)–(I3) for the inner product).  
Given \(\alpha\in F\) and \(u,v\in V\) with \(u\pmb{\perp} v\), we have
\begin{multicols}{2}
\begin{enumerate}
  \item \(\alpha\cdot_{V}u \;\pmb{\perp}\; v\);
  \item \(u \;\pmb{\perp}\; \alpha\cdot_{V}v\).
\end{enumerate}
\end{multicols}
\noindent
Indeed, \(\langle \alpha\cdot_{V}u,\,v\rangle=\alpha\bullet\langle u,v\rangle=0\) and
\(\langle u,\,\alpha\cdot_{V}v\rangle=\overline{\alpha}\bullet\langle u,v\rangle=0\).

The next lemma records two standard consequences of the existence of an orthonormal basis.
Since, by the definition of strong regularity, \(Q(V)=V\), the notions of linear independence
and span agree with those in the classical vector–space setting (cf.\ \cite[Lemma 2.11]{MarquesMoore}).
Consequently, the proof of the following lemma is similar to the classical one transported through field isomorphism (cf.\ \cite[Proposition 6.9]{LA}).

\begin{lemm}\label{orthobasis}
Let \(((F,\pps,\bullet),\psi)\) be an \(F\)-like line, let \((V,+_{V},\cdot_{V})\) be a strongly
regular near–vector space over \((F,\cdot)\), and let \((V,\langle-,-\rangle)\) be an inner–product
near–space over \((F,\pps,\bullet)\). If \(\mathcal{U}=\{u_{1},\dots,u_{n}\}\) is an orthonormal
basis of \(V\), then:
\begin{enumerate}
  \item \(\mathcal{U}\) is linearly independent;
  \item if \(\mathcal{U}\) is finite, then for every \(v\in V\),
        \[
          v \;=\; {}^{V}\!\sum_{i=1}^{n}\, \langle v,u_{i}\rangle \bullet u_{i}.
        \]
\end{enumerate}
\end{lemm}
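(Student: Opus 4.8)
The plan is to mimic the two classical arguments, transporting every field operation through the isomorphism \(\psi\) and using that strong regularity forces \(+_{V}\) to coincide with the unique induced addition \(\pps\) on each line, so the familiar linear-algebra manipulations are legal.

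For part (1), I would argue that linear independence is immediate from orthonormality. Suppose \({}^{V}\!\sum_{i=1}^{n}\alpha_{i}\cdot_{V}u_{i}=0\) with \(\alpha_{i}\in F\). Pair this with \(u_{j}\) on the left and use the additive expansion of the inner product in the first slot (Properties~\ref{innerprodproperties}~\ref{inner3}, iterated, i.e. \(\langle {}^{V}\!\sum_{i}\alpha_{i}\cdot_{V}u_{i},u_{j}\rangle={}^{}\!\sum_{i}\alpha_{i}\bullet\langle u_{i},u_{j}\rangle\)) together with \(\langle u_{i},u_{j}\rangle=\delta_{ij}\) and Properties~\ref{innerprodproperties}~\ref{inner5} (so \(\langle 0,u_{j}\rangle=0\)). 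This collapses the sum to \(\alpha_{j}\bullet 1=\alpha_{j}=0\) for every \(j\). Since \(0\notin\mathcal U\) (orthonormality forces \(\langle u_{i},u_{i}\rangle=1\neq 0\), hence \(u_{i}\neq 0\) by (I5)), and the vanishing of all coefficients is exactly the linear-independence condition in the strongly regular case (where, as noted in the excerpt, Definition~\ref{linindep} reduces to the classical one because \(Q(V)=V\)), we are done. I should be a little careful to spell out the one subtlety peculiar to near-vector spaces: a priori the expansion of the inner product over a \(V\)-sum requires that all the partial sums live in a single additive-compatibility class, but strong regularity (Remark~\ref{strongly}) guarantees \(+_{u}=+_{v}\) for all \(u,v\), so there is a single well-defined \({}^{V}\!\sum\) and (I3) applies term by term.

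For part (2), fix \(v\in V\). Because \(\mathcal U\) spans \(V\) as a near-vector space and is linearly independent, and because in the strongly regular setting Span and linear independence behave classically, \(v\) has a unique expression \(v={}^{V}\!\sum_{i=1}^{n}\alpha_{i}\cdot_{V}u_{i}\) with \(\alpha_{i}\in F\). Taking the inner product with \(u_{j}\) and expanding as in part (1) gives \(\langle v,u_{j}\rangle={}^{}\!\sum_{i}\alpha_{i}\bullet\langle u_{i},u_{j}\rangle=\alpha_{j}\), so substituting \(\alpha_{j}=\langle v,u_{j}\rangle\) back into the expansion yields \(v={}^{V}\!\sum_{i=1}^{n}\langle v,u_{i}\rangle\bullet u_{i}\), which is the asserted formula. (Strictly, I should confirm that a spanning linearly independent finite set does give such a unique representation in this framework; this is exactly \cite[Lemma 2.11]{MarquesMoore} / the classical fact invoked in the preamble to the lemma, so I will cite it rather than reprove it.)

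The main obstacle is not the algebra — once \(\psi\) is used to move operations around, the computations are verbatim the classical ones — but making sure the near-vector-space bookkeeping is airtight: namely that the iterated first-slot additivity \(\langle {}^{V}\!\sum_{i}\beta_{i}\cdot_{V}w_{i},\,w\rangle={}^{}\!\sum_{i}\beta_{i}\bullet\langle w_{i},w\rangle\) is legitimate (which needs strong regularity so that there is a single addition \(+_{V}=\pps\) and so that (I3) chains without domain-of-definition issues), and that ``spanning + linearly independent'' genuinely yields unique coordinates here. Both are handled by the remarks already in place (\(Q(V)=V\Rightarrow\) classical span/independence, and \(+_{u}\) independent of \(u\)), so the proof should be short; I will simply flag these two points and otherwise refer to \cite[Proposition 6.9]{LA}.
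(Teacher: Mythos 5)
Your argument is correct and coincides with the paper's intended proof: the paper explicitly omits the details, stating that the lemma follows by transporting the classical argument (cf.\ \cite[Proposition 6.9]{LA}) along the field isomorphism, which is exactly the coefficient-extraction computation you carry out, together with the same two bookkeeping observations (that strong regularity gives a single addition so (I3) iterates, and that $Q(V)=V$ makes span and linear independence classical). No gaps.
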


The following lemma is preparatory for the proof that every inner–product near–space admits
an orthonormal basis. Its proof mirrors the classical argument, transported along the field
isomorphism of the \(F\)-like line (cf.\ \cite[Theorem 6.12]{LA}).

\begin{lemm}[Bessel-type inequality]\label{BessIne}
Let \(((F,\pps,\bullet),\psi)\) be an \(F\)-like line, let \((V,+_{V},\cdot_{V})\) be a strongly
regular near–vector space over \((F,\cdot)\), let \((V,\langle-,-\rangle)\) be an inner–product
near–space over \((F,\pps,\bullet)\), and let \(\|\!-\!\|\) be the associated norm from
Lemma~\ref{normassociated}. If \(\mathcal{U}=\{u_{1},\dots,u_{n}\}\subset V\) is a finite
orthonormal set, then for all \(v\in V\), 
\[
\bm{\sum}_{i=1}^{n} \norm{\langle v,u_{i}\rangle}^{\bm 2}\;\pmb{\leq}\;\|v\|^{\bm 2}.
\]
Moreover, if
\[
v' \;:=\; v \;-_V\; {}^{V}\!\sum_{i=1}^{n}\langle v,u_{i}\rangle \bullet u_{i},
\]
then \(v'\) is orthogonal to every vector in \(\mathcal{U}\); that is, \(v'\perp u_{i}\) for
\(i=1,\dots,n\).
\end{lemm}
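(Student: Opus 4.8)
The plan is to mirror the classical proof of Bessel's inequality, transporting every field operation along the isomorphism $\psi$ and using the inner–product axioms (I1)--(I5) together with Properties~\ref{innerprodproperties}. Set $v' := v \;-_V\; {}^{V}\!\sum_{i=1}^{n}\langle v,u_{i}\rangle \bullet u_{i}$, and first compute $\langle v', u_j\rangle$ for each $j$. Expanding by additivity (\ref{inner2}, \ref{inner3}) and conjugate-linearity in the second slot (\ref{inner1}), the term $\langle {}^{V}\!\sum_i \langle v,u_i\rangle\bullet u_i,\ u_j\rangle$ collapses to $\langle v,u_j\rangle\bullet\langle u_j,u_j\rangle = \langle v,u_j\rangle$ because $\langle u_i,u_j\rangle=\delta_{ij}$ by orthonormality and $\langle u_j,u_j\rangle = 1$. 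Hence $\langle v',u_j\rangle = \langle v,u_j\rangle \mms \langle v,u_j\rangle = 0$, which is exactly the ``moreover'' claim that $v'\perp u_i$ for all $i$.

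Next I would establish the inequality itself. Write $v = v' \pps {}^{V}\!\sum_{i=1}^{n}\langle v,u_i\rangle\bullet u_i$ and expand $\|v\|^{\bm 2} = \langle v,v\rangle$ using sesquilinearity. Since $v'$ is orthogonal to each $u_i$, it is orthogonal to the whole sum ${}^{V}\!\sum_i \langle v,u_i\rangle\bullet u_i$ (using the two orthogonality properties recorded just before Lemma~\ref{orthobasis}: orthogonality is preserved under scalar multiplication in either slot, and additivity of the inner product extends this to finite sums). Therefore the cross terms vanish and
\[
\langle v,v\rangle \;=\; \langle v',v'\rangle \;\pps\; \Big\langle {}^{V}\!\sum_{i=1}^{n}\langle v,u_i\rangle\bullet u_i,\ {}^{V}\!\sum_{j=1}^{n}\langle v,u_j\rangle\bullet u_j\Big\rangle.
\]
The double sum simplifies, again by orthonormality, to $\bm{\sum}_{i=1}^{n}\langle v,u_i\rangle\bullet\myov{\langle v,u_i\rangle} = \bm{\sum}_{i=1}^{n}\norm{\langle v,u_i\rangle}^{\bm 2}$ by Properties~\ref{properties}~\ref{square}. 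Since $\langle v',v'\rangle \in \mathbb{R}^{\pps}$ by (I4) (or is $0$ by (I5)), we get $\bm{\sum}_{i=1}^{n}\norm{\langle v,u_i\rangle}^{\bm 2} \pmb{\leq} \langle v,v\rangle = \|v\|^{\bm 2}$, using that $x \pmb{\leq} x \pps y$ whenever $y \in \mathbb{R}^{\pps}$ (immediate from Definition~\ref{line}~\ref{leq}).

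The main obstacle — really a bookkeeping hurdle rather than a deep one — is justifying the manipulations of the \emph{iterated} $+_V$-sum ${}^{V}\!\sum$ and its image $\bm{\sum}$ under $\psi$: one must check that additivity of $\langle-,-\rangle$ in each argument (axioms I3 and \ref{inner2}) propagates correctly through an $n$-fold sum, that $\psi$ carries ${}^{V}\!\sum$-type expressions to ${}\bm{\sum}$-type expressions compatibly (so that, e.g., $\langle v,v\rangle = \langle v',v'\rangle \pps (\text{sum})$ is literally the transported version of the classical identity), and that the inequality $x \pmb{\leq} x\pps y$ for $y\in\mathbb{R}^{\pps}$ behaves as expected — all of which follow from $\psi$ being a field isomorphism as in Definition~\ref{line} and the strong regularity hypothesis $Q(V)=V$, which guarantees that $+_V$ is genuinely the single abelian-group addition making the $F$-like line computations valid. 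Once these transport checks are in place, the argument is a line-by-line translation of \cite[Theorem 6.12]{LA}.
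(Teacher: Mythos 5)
Your proposal is correct and follows exactly the route the paper intends: the paper omits the proof of Lemma~\ref{BessIne}, stating only that it is the classical argument of \cite[Theorem 6.12]{LA} transported along $\psi$, and your decomposition $v=v'+_V{}^{V}\!\sum_i\langle v,u_i\rangle\bullet u_i$ with vanishing cross terms and $\langle v',v'\rangle\in\mathbb{R}^{\pps}$ is precisely that argument. (Only note the small slip where you wrote $v=v'\pps\cdots$ with the scalar addition $\pps$ in place of the vector addition $+_V$, and that the first computation needs only (I2)--(I3), not conjugate-linearity in the second slot.)
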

 As for classical inner product spaces, orthonormal bases exist; the proof is again the transported version of (cf. \cite[Proposition 6.9]{LA}).

\begin{prop}
    Let $((F,\pps,\bullet), \psi)$ be an $F$-like line, $((V,+_{V}),\cdot_{V})$ be a strongly regular near-vector space over $(F,\cdot)$ and $(V, \langle -,- \rangle$) be a finite dimensional inner product near-space over $(F,\pps,\bullet)$ and $\|-\|$ be the norm from Lemma \ref{normassociated}. Then, orthonormal bases exist in $(V, \langle -,- \rangle)$.
\end{prop}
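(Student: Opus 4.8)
The plan is to run the Gram--Schmidt process, transported verbatim along the field isomorphism $\psi$. Since $V$ is finite dimensional, fix a finite basis; by strong regularity $Q(V)=V$, so (as remarked before Lemma~\ref{orthobasis}) linear independence and span behave exactly as in the classical vector-space setting, and in particular we may choose a finite \emph{linearly independent} generating set $\{v_{1},\dots,v_{n}\}$ of $V$. I would then construct an orthonormal set inductively: having produced an orthonormal set $\{u_{1},\dots,u_{k-1}\}$ with the same span as $\{v_{1},\dots,v_{k-1}\}$, set
\[
v_{k}' \;:=\; v_{k} \;-_{V}\; {}^{V}\!\sum_{i=1}^{k-1}\langle v_{k},u_{i}\rangle \bullet u_{i},
\]
and put $u_{k} := \norm{\langle v_{k}',v_{k}'\rangle}^{\bm{2^{-1}}}{}^{-1}\bullet v_{k}'$, i.e.\ $u_k := \|v_k'\|^{-1}\cdot_V v_k'$ using the norm from Lemma~\ref{normassociated}.

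The key steps, in order: (1) verify $v_{k}'\neq 0$ --- this is where linear independence of the $v_i$ is used: if $v_{k}'=0$ then $v_{k}$ lies in $\operatorname{Span}\{u_{1},\dots,u_{k-1}\}=\operatorname{Span}\{v_{1},\dots,v_{k-1}\}$, contradicting independence; hence by (I5) $\langle v_{k}',v_{k}'\rangle\neq 0$ and, being in $\mathbb{R}^{\pps}$ by (I4), it has a well-defined square root $\norm{\langle v_{k}',v_{k}'\rangle}^{\bm{2^{-1}}}\in\mathbb{R}^{\pps}$, which is invertible. (2) Check $v_{k}'\pmb{\perp} u_{i}$ for $i<k$: this is precisely the content of the ``moreover'' part of the Bessel-type Lemma~\ref{BessIne} applied to the orthonormal set $\{u_{1},\dots,u_{k-1}\}$ and the vector $v_{k}$. (3) Check $\langle u_{k},u_{k}\rangle = 1$: using (I2), Properties~\ref{innerprodproperties}~\ref{inner1}, and Properties~\ref{properties}~\ref{square}, one computes $\langle u_k,u_k\rangle = \norm{\norm{\langle v_k',v_k'\rangle}^{\bm{2^{-1}}}{}^{-1}}^{\bm 2}\bullet\langle v_k',v_k'\rangle = \langle v_k',v_k'\rangle^{-1}\bullet\langle v_k',v_k'\rangle = 1$, since $\langle v_k',v_k'\rangle\in\mathbb{R}^{\pps}$ equals its own norm and the square of its square root. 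Scaling by a nonzero scalar does not change the span, so $\operatorname{Span}\{u_{1},\dots,u_{k}\}=\operatorname{Span}\{v_{1},\dots,v_{k}\}$, and combined with orthogonality from step~(2) the enlarged set $\{u_{1},\dots,u_{k}\}$ is orthonormal. After $n$ steps, $\{u_{1},\dots,u_{n}\}$ is orthonormal and spans $V$, hence is an orthonormal basis.

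The main obstacle is bookkeeping rather than mathematics: one must be careful that every scalar appearing --- in particular $\langle v_{k}',v_{k}'\rangle$ and its $\bm{2^{-1}}$-th root --- genuinely lies in $\mathbb{R}^{\pps}\subseteq F$ and is invertible there, so that $u_{k}$ is a legitimate vector and the normalization identity $\langle u_k,u_k\rangle=1$ goes through; this is exactly guaranteed by (I4), (I5), and the existence-and-uniqueness of square roots in $\mathbb{R}^{\pps}$ built into Definition~\ref{line}. A secondary point is that the inductive span equality must be maintained at each stage so that the final set is a \emph{basis} and not merely an orthonormal set; this follows because at each step $u_{k}$ is a nonzero scalar multiple of $v_{k}$ modified by an element of the previously-built span, so no span is lost. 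Since strong regularity reduces all the span and independence arguments to the classical ones, the whole construction is the verbatim transport of Gram--Schmidt through $\psi$, and no genuinely new difficulty arises.
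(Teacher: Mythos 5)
Your proposal is correct and follows exactly the route the paper intends: the paper omits the proof, stating only that it is the transported version of the classical Gram--Schmidt argument (citing \cite[Proposition 6.9]{LA}), and your write-up is precisely that transport, with the orthogonality step correctly delegated to the ``moreover'' part of Lemma~\ref{BessIne} and the normalization justified via (I4), (I5), and the square-root structure of Definition~\ref{line}.
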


In the following proposition, we shall denote the sum $u_{1} \pps \cdots \pps u_{n}$ by $\bm{\sum}_{i=1}^{n}u_{i}$.

\begin{lemm}\label{squares}
    Let $((F,\pps,\bullet), \psi)$ be an $F$-like line, $((V,+_{V}),\cdot_{V})$ be a strongly regular near-vector space over $(F,\cdot)$ and $(V, \langle -,- \rangle$) be an inner product near-space over $(F,\pps,\bullet)$ and $\|-\|$ be the norm from Lemma \ref{normassociated}. Let $u_{1},\cdots, u_{n} \in V$. If $\{ u_{1},\cdots, u_{n}\}$ is a set of orthogonal vectors, then 
   \begin{equation*}
   {}^{}\!\bm{\sum}_{i=1}^{n} \|u_{i}\|^{\bm{2}} = \left\|{}^{V}\!\sum_{i=1}^{n} u_{i}\right\|^{\bm{2}}. 
    \end{equation*}
\end{lemm}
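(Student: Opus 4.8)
The plan is to prove this by induction on $n$, exactly mirroring the classical Pythagorean theorem for mutually orthogonal vectors but carrying every step through the field isomorphism $\psi$. For the base case $n=1$ the statement is the identity $\|u_1\|^{\bm 2}=\|u_1\|^{\bm 2}$. For the inductive step, I would write ${}^{V}\!\sum_{i=1}^{n} u_i = \left({}^{V}\!\sum_{i=1}^{n-1} u_i\right) +_V u_n$ and expand the squared norm using the inner product: by Definition-Lemma~\ref{normassociated}, $\left\|{}^{V}\!\sum_{i=1}^{n} u_i\right\|^{\bm 2} = \left\langle {}^{V}\!\sum_{i=1}^{n} u_i, {}^{V}\!\sum_{i=1}^{n} u_i\right\rangle$, and then applying additivity of the inner product in both slots (Properties~\ref{innerprodproperties}~\ref{inner2} together with I3) yields
\[
\left\langle {}^{V}\!\sum_{i=1}^{n} u_i,\ {}^{V}\!\sum_{i=1}^{n} u_i\right\rangle
= \bm{\sum}_{j=1}^{n}\bm{\sum}_{k=1}^{n} \langle u_j, u_k\rangle.
\]

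The key simplification is that by hypothesis $\{u_1,\dots,u_n\}$ is a set of orthogonal vectors, so $\langle u_j,u_k\rangle = 0$ whenever $j\neq k$; hence all off-diagonal terms vanish (here one uses that $0$ is the additive identity for $\pps$), leaving $\bm{\sum}_{i=1}^{n}\langle u_i,u_i\rangle = \bm{\sum}_{i=1}^{n}\|u_i\|^{\bm 2}$, which is the claim. Equivalently, without invoking the double sum explicitly, one runs the induction: by the inductive hypothesis $\left\|{}^{V}\!\sum_{i=1}^{n-1} u_i\right\|^{\bm 2} = \bm{\sum}_{i=1}^{n-1}\|u_i\|^{\bm 2}$, and since $u_n$ is orthogonal to each $u_i$ for $i<n$, it is orthogonal to the partial sum ${}^{V}\!\sum_{i=1}^{n-1} u_i$ (by additivity of $\langle-,-\rangle$ in the first slot, the cross term $\left\langle {}^{V}\!\sum_{i=1}^{n-1} u_i,\ u_n\right\rangle$ is a $\pps$-sum of zeros, hence $0$, and likewise its conjugate); expanding $\left\langle \left({}^{V}\!\sum_{i=1}^{n-1} u_i\right) +_V u_n,\ \left({}^{V}\!\sum_{i=1}^{n-1} u_i\right) +_V u_n\right\rangle$ via Properties~\ref{innerprodproperties} kills the two cross terms and gives $\left\|{}^{V}\!\sum_{i=1}^{n-1} u_i\right\|^{\bm 2} \pps \|u_n\|^{\bm 2} = \bm{\sum}_{i=1}^{n}\|u_i\|^{\bm 2}$.

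The routine bookkeeping step that needs minor care is the associativity and commutativity of the iterated sum ${}^{V}\!\sum$ in $V$ and of $\bm{\sum}$ in $(F,\pps)$ — i.e.\ that the partial sum ${}^{V}\!\sum_{i=1}^{n-1} u_i$ is genuinely a sub-expression of ${}^{V}\!\sum_{i=1}^{n} u_i$ and that rearranging the scalar sum is legitimate. This is immediate because strong regularity forces a single well-defined addition $+_V$ on $V$ (Remark~\ref{strongly}, part 1), so $(V,+_V)$ is an abelian group, and $(F,\pps)$ is an abelian group since $\psi$ is a field isomorphism; there are no subtleties of the kind that arise in non-regular near-vector spaces. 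I do not expect any genuine obstacle here: the entire argument is the classical one transported along $\psi$, and the only thing to keep in mind throughout is to use the transported operations $\pps$ and $\bullet$ and the bilinearity-type axioms I2, I3, and Properties~\ref{innerprodproperties}~\ref{inner2}, rather than the ordinary field operations on $F$.
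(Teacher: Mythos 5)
Your proof is correct and is precisely the argument the paper intends: the paper states Lemma~\ref{squares} without proof, treating it as the classical Pythagorean identity transported along $\psi$, and your induction/double-sum expansion using Definition-Lemma~\ref{normassociated}, axiom I3, Properties~\ref{innerprodproperties}, and the vanishing of the off-diagonal terms by orthogonality is exactly that transported argument. Your side remarks (that $0$ is the $\pps$-identity and that strong regularity gives a single well-defined $+_V$, so the iterated sums are unambiguous) correctly dispose of the only points where the near-vector-space setting could have caused trouble.
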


\section{Duality and dual space of near-vector spaces}

In this section, we examine the duality theory for near-vector spaces and its connection with inner products. 
The main difficulty in the near-linear setting lies in proving that the dual space is itself a near-vector space, namely in showing that the quasi-kernel generates. 
We establish this result in the finite-dimensional case, where the dual can indeed be endowed with a near-vector space structure.

\begin{definlem}
Let $(F,+,\cdot)$ be a near-field and $V$ be a near-vector space over $F$. Consider
\[
V^{\circledast}:=\operatorname{Hom}(V,F),
\]
endowed with pointwise operations: 
\[
(\alpha f)(v):=\alpha\,f(v)\quad\text{and}\quad (f+g)(v):=f(v)+g(v)
\qquad(\alpha\in F,\; f,g\in V^{\circledast},\; v\in V),
\]
where the maps are \(F\)-linear \(V\to F\) with respect to the \(F\)-space structure on \(V\) and the near-field structure on \(F\). We call $V^{\circledast}$ the {\sf dual near-space of $V$ over $(F,+,\cdot)$}.
These operations make \(V^{\circledast}\) an \(F\)-space.
\end{definlem}

\begin{rem}
For a finite-dimensional near-vector space, the dual depends on the near-field structure chosen on the scalar group over which the near-vector space is defined.
In what follows, our notation does not distinguish between these possibilities.
\end{rem}

We now define the dual operator associated with a given set.

\begin{defin}
Let $(F,+,\cdot)$ be a near-field, and let $V$ be a near-vector space over $F$.  
Given a subset $\mathcal{B} = \{b_i \mid i \in I\} \subseteq V$, we define for each $j \in I$ a map
\[
b_j^{\circledast_{\mathcal{B}}} : \mathcal{B} \longrightarrow F, \qquad 
b_j^{\circledast_{\mathcal{B}}}(b_i) = \delta_{j,i} \quad \text{for all } b_i \in \mathcal{B}.
\]
The family 
\[
\mathcal{B}^{\circledast} := \{\, b_j^{\circledast_{\mathcal{B}}} \mid j \in I \,\}
\]
is called the \emph{dual family} associated with $\mathcal{B}$.
\end{defin}

For finite-dimensional \(V\), the next lemma shows
that every \(F\)-basis admits a corresponding dual basis in
\(V^{\circledast}\).

\begin{lemm}
\label{dualbasis}
Let $(F,+,\cdot)$ be a near-field, let $V$ be a near-vector space over $F$, and $\mathcal{B} = \{b_i \mid i \in I\} \subseteq V$. Then:
\begin{enumerate}
    \item if $\mathcal{B}$ is linearly independent of $V$, then $\mathcal{B}^{\circledast}$ is linearly independent of $V^{\circledast}$;
    \item if $\mathcal{B}$ is a finite spanning set of $V$, then $\mathcal{B}^{\circledast}$ spans $V^{\circledast}$;
    \item if $\mathcal{B}$ is a finite $F$-basis of $V$, then $\mathcal{B}^{\circledast}$ is a finite $F$-basis of $V^{\circledast}$.
\end{enumerate}
In particular, if $V$ is finite-dimensional, then $V$ and its dual $V^{\circledast}$ have the same minimal and maximal dimensions.
\end{lemm}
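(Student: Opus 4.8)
The plan is to prove the three assertions in turn, with (3) following immediately by combining (1) and (2), and then to read off the statement about dimensions from (3).

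For assertion (1), I would argue by contradiction using the definition of linear independence (Definition~\ref{linindep}) transported to $V^{\circledast}$. Suppose a finite linear combination $\sum_{j\in J}\big(\sum_{k=1}^{n_j}\alpha_{j,k}\cdot b_j^{\circledast_{\mathcal{B}}}\big)=0$ in $V^{\circledast}$ with $J\finsub I$. Evaluating this functional at an arbitrary basis element $b_i\in\mathcal{B}$ and using the defining property $b_j^{\circledast_{\mathcal{B}}}(b_i)=\delta_{j,i}$ together with the pointwise $F$-space structure, the only surviving term (when $i\in J$) is $\sum_{k=1}^{n_i}\alpha_{i,k}\cdot b_i^{\circledast_{\mathcal{B}}}(b_i)=\sum_{k=1}^{n_i}\alpha_{i,k}$ evaluated in $F$, so this scalar is $0$. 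One then has to be slightly careful: linear independence in the near-vector space sense requires showing each inner block $\sum_{k=1}^{n_i}\alpha_{i,k}\cdot b_i^{\circledast_{\mathcal{B}}}$ vanishes as a functional, which follows since its value at any $b_i$ is $0$ and its value at any other basis vector is $0$ by the Kronecker delta, hence it is zero on a generating set and therefore zero.

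For assertion (2), the key point is that the $b_i^{\circledast_{\mathcal{B}}}$ are only defined on $\mathcal{B}$, so I must first extend each to an element of $V^{\circledast}=\operatorname{Hom}(V,F)$; this is where finiteness and the basis hypothesis are used. Given $f\in V^{\circledast}$ and a finite basis $\mathcal{B}=\{b_1,\dots,b_n\}$, every $v\in V$ can be written as $v={}^{V}\!\sum_{i=1}^n \gamma_i(v)\cdot b_i$ for suitable scalars $\gamma_i(v)\in F$ (spanning), and I would show $f$ is determined by its values $f(b_i)$ by applying $F$-linearity of $f$ — this uses that the scalar action on each $b_i$ interacts with the (common, since $V$ need not be strongly regular here but is finite-dimensional) addition $+_V$ correctly, and in particular that the decomposition behaves additively enough for $f$ to pull through. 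Then one checks $f=\sum_{i=1}^n f(b_i)\cdot b_i^{\circledast_{\mathcal{B}}}$ after extending each $b_i^{\circledast_{\mathcal{B}}}$ by the same formula, i.e. $b_i^{\circledast_{\mathcal{B}}}(v):=\gamma_i(v)$.

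Assertion (3) is then immediate: if $\mathcal{B}$ is a finite $F$-basis, by (1) $\mathcal{B}^{\circledast}$ is linearly independent and by (2) it spans $V^{\circledast}$, so it is a finite $F$-basis, and clearly $|\mathcal{B}^{\circledast}|=|\mathcal{B}|$. For the final sentence, given any minimal (resp.\ maximal) basis $\mathcal{B}$ of $V$, the set $\mathcal{B}^{\circledast}$ is a basis of $V^{\circledast}$ of the same cardinality, so the minimal (resp.\ maximal) dimension of $V^{\circledast}$ is at most that of $V$; applying the same argument to $V^{\circledast}$ (using that $V^{\circledast}$ is finite-dimensional by (3)) gives the reverse inequality once one knows the canonical map $V\to V^{\circledast\circledast}$ is well-behaved, or more directly one argues that any basis of $V^{\circledast}$ of a given size must have come from (dualizing) a basis of $V$ of the same size via a symmetric bijection. \textbf{The main obstacle} I anticipate is assertion (2): unlike the classical case, $V$ is a near-vector space, so the ``coordinate functionals'' $v\mapsto\gamma_i(v)$ need not be additive in the naive sense, and one must verify carefully — using finite-dimensionality and the structure theory cited (Lemma~\ref{spanlin}, and the fact that for a finite-dimensional near-vector space the relevant addition is controlled) — that $b_i^{\circledast_{\mathcal{B}}}$ as extended is genuinely $F$-linear, i.e. lies in $\operatorname{Hom}(V,F)$, and that an arbitrary $f\in V^{\circledast}$ really is the claimed linear combination rather than merely agreeing with it on $\mathcal{B}$.
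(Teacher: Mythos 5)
Your proposal follows essentially the same route as the paper: part (1) by evaluating the vanishing combination at each $b_i$ and using $b_j^{\circledast_{\mathcal{B}}}(b_i)=\delta_{j,i}$ to isolate each block, part (2) by checking that $\psi$ and $\sum_{j}\psi(b_j)\cdot b_j^{\circledast_{\mathcal{B}}}$ agree on $\mathcal{B}$ and hence on $\operatorname{Span}(\mathcal{B})=V$, and part (3) by combining the two. The one point where you go beyond the paper---worrying about how $b_i^{\circledast_{\mathcal{B}}}$, a priori defined only on $\mathcal{B}$, extends to a genuine element of $V^{\circledast}=\operatorname{Hom}(V,F)$, i.e.\ whether the coordinate functionals are additive with respect to the near-field addition on $F$---is a real issue that the paper's proof leaves entirely implicit, so your caution there identifies a gap in the published argument rather than a deviation from it.
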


\begin{proof}
\begin{enumerate}
    \item 
Suppose $\sum_{j \in J}\left(\sum_{k=1}^{m}\beta_{j,k} \cdot b_j^{\circledast_{\mathcal{B}}} \right) = 0$ where $J \finsub I$, $m \in \mathbb{N}$ and $\beta_{j,k} \in F$ for all $j \in I$ and $k \in \{1,\cdots, m\}$. Then, for all $b_{i} \in \mathcal{B}$, we have 
    \begin{align*}
\sum_{j \in J}\left(\sum_{k=1}^{m}\beta_{j,k} \cdot b_j^{\circledast_{\mathcal{B}}} \right) (b_i) = \sum_{j \in J}\sum_{k=1}^{m}\beta_{j,k} \cdot b_j^{\circledast_{\mathcal{B}}} (b_i) = \sum_{k=1}^{m} \beta_{j,k} = 0. 
\end{align*}
This is equivalent to $\sum_{k=1}^{m}\beta_{j,k}\cdot b_j^{\circledast_{\mathcal{B}}} = 0$ for all $j \in \{1,\cdots,n\}$,
which proves linear independence.
\item Let $\psi \in V^{\circledast}$. Then for all $b_i \in \mathcal{B}$, we see that $\sum_{j\in I}\psi(b_{j})\cdot b_j^{\circledast_{\mathcal{B}}} (b_{i}) = \psi(b_i)$ and so $\psi = \sum_{j \in I}\psi(b_j)\cdot b_j^{\circledast_{\mathcal{B}}}$, which shows that $\{b_{j}^{\circledast_{\mathcal{B}}} \mid j \in I\}$ spans $V^\circledast$.
\item Suppose $\mathcal{B}$ is a finite $F$-basis for $V$. Then it follows by 1. and 2. that $\mathcal{B}^{\circledast}$ is a finite $F$-basis for $V^{\circledast}$.
  \end{enumerate}  
\end{proof}
Recall that an $F$-space $V$ is a near-vector space over $F$ if and only if it admits a scalar basis over $F$ (see \cite[Theorem~3.11]{MarquesMoore}). 
Consequently, the following lemma is an immediate corollary of Lemma~\ref{dualbasis}.
\begin{prop}
    Let $(F,+,\cdot)$ be a near-field and $V$ be a finite near-vector space over $F$. Then, $V^{\circledast}$ is a near-vector space over $F$.
\end{prop}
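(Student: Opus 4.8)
The plan is to invoke the characterization of near-vector spaces recalled just before the statement: an $F$-space $V$ is a near-vector space over $F$ if and only if it admits a scalar $F$-basis (see \cite[Theorem~3.11]{MarquesMoore}). Since the dual near-space $V^{\circledast}$ is already an $F$-space (this is part of the definition-lemma establishing pointwise operations), it suffices to exhibit a finite scalar $F$-basis for $V^{\circledast}$.

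First I would use that $V$ is a finite near-vector space over $F$, so by definition it admits a finite scalar $F$-basis $\mathcal{B} = \{b_1,\dots,b_n\}$. Applying Lemma~\ref{dualbasis}(3) to this $\mathcal{B}$, the dual family $\mathcal{B}^{\circledast} = \{b_1^{\circledast_{\mathcal{B}}},\dots,b_n^{\circledast_{\mathcal{B}}}\}$ is a finite $F$-basis of $V^{\circledast}$. It remains only to check that the elements of $\mathcal{B}^{\circledast}$ are scalar, i.e.\ lie in the quasi-kernel $Q(V^{\circledast})$. This is where the argument needs a small amount of care: one must verify that for each $j$ and all $\alpha,\beta \in F$ there is $\gamma \in F$ with $\alpha \cdot b_j^{\circledast_{\mathcal{B}}} + \beta \cdot b_j^{\circledast_{\mathcal{B}}} = \gamma \cdot b_j^{\circledast_{\mathcal{B}}}$ in $V^{\circledast}$. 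Evaluating both sides at the basis vectors $b_i$ reduces this to the statement that $b_j \in Q(V)$ — which holds because $\mathcal{B}$ is a \emph{scalar} basis — so the required $\gamma$ is exactly the one witnessing $b_j \in Q(V)$ for the pair $(\alpha,\beta)$; then $\alpha \cdot b_j^{\circledast_{\mathcal{B}}} + \beta \cdot b_j^{\circledast_{\mathcal{B}}}$ and $\gamma \cdot b_j^{\circledast_{\mathcal{B}}}$ agree on all $b_i$, hence on $\operatorname{Span}(\mathcal{B}) = V$, and are therefore equal.

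Having produced a finite scalar $F$-basis of the $F$-space $V^{\circledast}$, the cited characterization \cite[Theorem~3.11]{MarquesMoore} immediately yields that $V^{\circledast}$ is a near-vector space over $F$, completing the proof. The only genuine obstacle is the verification that dual-basis functionals are scalar; everything else is a direct citation of Lemma~\ref{dualbasis} and the $F$-space structure already established. In fact, since the scalarity check only uses that $b_j$ lies in $Q(V)$, one could phrase the argument so that it transparently records why the \emph{scalar} basis hypothesis on $\mathcal{B}$ (not merely that $\mathcal{B}$ is a basis) is what makes the dual basis scalar.
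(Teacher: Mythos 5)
Your proposal follows the paper's route exactly: the paper disposes of this proposition in one sentence, as an immediate corollary of Lemma~\ref{dualbasis} together with the characterization that an $F$-space is a near-vector space if and only if it admits a scalar basis, and that is precisely the skeleton of your argument. The one place where you add content --- checking that the dual functionals $b_j^{\circledast_{\mathcal{B}}}$ are scalar, which the paper silently omits --- is welcome, but your diagnosis of \emph{why} they are scalar is off. Evaluating $\alpha \cdot b_j^{\circledast_{\mathcal{B}}} + \beta \cdot b_j^{\circledast_{\mathcal{B}}}$ and $\gamma \cdot b_j^{\circledast_{\mathcal{B}}}$ at the basis vectors gives $\alpha\,\delta_{j,i} + \beta\,\delta_{j,i}$ versus $\gamma\,\delta_{j,i}$, so the forced witness is $\gamma = \alpha + \beta$, the sum in the near-field $(F,+,\cdot)$; this computation never invokes $b_j \in Q(V)$, and the witness for $b_j \in Q(V)$ would in general be $\alpha +_{b_j} \beta$, which need not coincide with $\alpha+\beta$. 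So your closing claim that the \emph{scalar} hypothesis on $\mathcal{B}$ is ``what makes the dual basis scalar'' is not what the computation shows: what the argument actually leans on is (i) the pointwise near-field structure on the target $F$, and (ii) the fact that both sides are elements of $V^{\circledast}$ (hence $F$-linear) agreeing on a generating set --- i.e., the $F$-space structure on $V^{\circledast}$ already granted by the earlier definition-lemma. With that attribution corrected, the proof is the same as the paper's and is complete modulo the same background facts the paper itself takes for granted.
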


Each inner product near-space structure naturally gives rise to a corresponding dual space, as defined below.

\begin{definlem}
Let $((F,\pps,\bullet), \psi)$ be an $F$-like line, $((V,+_{V}),\cdot_{V})$ a strongly regular near-vector space over $(F,\cdot)$, and $(V, \langle -,- \rangle)$ an inner product near-space over $(F,\pps,\bullet)$. 
For each $u \in V$, define
\[
f_{u}: V \rightarrow F, \qquad f_{u}(v):= \langle v, u \rangle .
\]
We call $f_{u}$ a {\sf linear functional map} on $V$. 
The set of all linear functional maps,
\[
V^{\curlyvee}:=\{\, f_{u} \mid u \in V \,\},
\]
equipped with pointwise addition and scalar multiplication inherited from $F$, is called the {\sf dual near-vector space with the inner near-vector space $(V, \langle -,- \rangle)$ on $(((V,+_{V}),\cdot_{V}),(F,\cdot))$ over $(F,\pps,\bullet)$} and is denoted by $V^\curlyvee$. 
Furthermore, $V^{\curlyvee}$ is a subspace of $V^{\circledast}$.
\end{definlem}

The following theorem states how every finite dimensional near-vector space is naturally isormorphic to its double dual, as in the classical case. The proof mirrors the proof provided in \cite[Lemma 3.23]{LA}. 

\begin{theo}
Let $(F,+,\cdot)$ be a near-field and $V$ a finite-dimensional near-vector space over $F$. 
Define
\[
\eta : V \longrightarrow (V^{\circledast})^{\circledast}, 
\qquad 
(\eta(v))(\psi) := \psi(v) \quad \text{for all } v \in V,\ \psi \in V^{\circledast}.
\]
Then $\eta$ is an $F$-isomorphism of near-vector spaces.
\end{theo}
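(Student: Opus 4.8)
The plan is to follow the familiar three-step argument for the classical double-dual isomorphism, checking at each point that the near-vector-space axioms (rather than full linearity) suffice. First I would verify that $\eta$ is well-defined, i.e.\ that $\eta(v) \in (V^{\circledast})^{\circledast} = \operatorname{Hom}(V^{\circledast}, F)$ for each $v \in V$: fixing $v$, the map $\psi \mapsto \psi(v)$ is $F$-linear in $\psi$ because the operations on $V^{\circledast}$ are pointwise, so $(\alpha\psi + \chi)(v) = \alpha\,\psi(v) + \chi(v)$ directly from the definition of the dual near-space. Second I would check that $\eta$ itself is a homomorphism of $F$-spaces, that is, $\eta$ is additive and $\eta(\alpha\cdot v) = \alpha\cdot\eta(v)$; here additivity is immediate, and the scalar-compatibility needs the relation $(\eta(\alpha\cdot v))(\psi) = \psi(\alpha\cdot v) = \alpha\,\psi(v) = (\alpha\cdot\eta(v))(\psi)$, where the middle equality is exactly the $F$-linearity of $\psi \in V^{\circledast}$. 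Since the scalar map $\eta$ on the scalar group is the identity, $\eta$ is an $F$-linear map in the sense of Definition~\ref{homo}.

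Third, and this is the crux, I would prove $\eta$ is bijective. Injectivity: if $\eta(v) = 0$ then $\psi(v) = 0$ for every $\psi \in V^{\circledast}$; taking a finite scalar $F$-basis $\mathcal{B} = \{b_1,\dots,b_n\}$ of $V$ (which exists since $V$ is finite-dimensional) and writing $v$ in terms of this basis, applying the dual functionals $b_i^{\circledast_{\mathcal{B}}} \in V^{\circledast}$ from Lemma~\ref{dualbasis} forces every coordinate of $v$ to vanish, so $v = 0$. For surjectivity and a clean conclusion I would invoke a dimension count: by Lemma~\ref{dualbasis} applied twice, $V$, $V^{\circledast}$, and $(V^{\circledast})^{\circledast}$ all have the same minimal and maximal dimensions; since $\eta$ is an injective $F$-linear map between finite-dimensional near-vector spaces of equal dimension, it is an $F$-isomorphism. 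Alternatively, one can argue surjectivity directly by checking that $\eta$ sends the basis $\mathcal{B}$ to the double-dual basis $(\mathcal{B}^{\circledast})^{\circledast}$, i.e.\ $\eta(b_i) = (b_i^{\circledast_{\mathcal{B}}})^{\circledast_{\mathcal{B}^{\circledast}}}$, which follows by evaluating both sides on $\mathcal{B}^{\circledast}$.

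\textbf{Main obstacle.} The subtle point, and where the near-linear setting differs from the classical one, is that linear independence and span for a near-vector space are defined in the more delicate sense of Definitions~\ref{linindep} and \ref{basis} (with the inner sums over each basis vector required to vanish separately), so I must be careful that the coordinate extraction in the injectivity step and the "$\eta$ carries basis to basis" computation in the surjectivity step genuinely use only the properties guaranteed by Lemma~\ref{dualbasis}. In particular I would lean on the fact, already used in the excerpt, that a finite scalar basis lets one write each $v \in V$ as a finite sum $\sum_i c_i \cdot b_i$ and that the functionals $b_i^{\circledast_{\mathcal{B}}}$ extract precisely the relevant summand; once this bookkeeping is set up, the remaining manipulations transport the classical proof of \cite[Lemma 3.23]{LA} essentially verbatim.
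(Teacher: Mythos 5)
Your proposal is correct and follows essentially the same route as the paper, which itself gives no details beyond stating that the argument mirrors the classical double-dual proof of \cite[Lemma 3.23]{LA}: well-definedness and $F$-linearity from the pointwise operations, injectivity via the dual functionals of a finite scalar basis, and surjectivity via the dual-basis/dimension bookkeeping of Lemma~\ref{dualbasis}. Your explicit caution about the near-linear notion of linear independence (inner sums vanishing separately) and your preference for the direct ``basis to double-dual basis'' surjectivity argument over a bare dimension count are exactly the right points to be careful about in this setting.
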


Finally, we define the notions of annihilator and orthogonal complement in the setting of near-vector spaces. 
These constructions determine important subspaces of the dual near-spaces introduced earlier.

\begin{definlem}

\begin{enumerate}
  \item Let $(F,+,\cdot)$ be a near-field and $V$ a finite dimensional near-vector space over $F$. 
  If $W$ is a subspace of $V$, the {\sf annihilator of $W$ in $V^{\circledast}$}, denoted $W^{\circ}$, is
  \[
    W^{\circ}:= \{ \psi \in V^{\circledast} \mid \psi(w) = 0 \text{ for all } w \in W \}.
  \]
  In particular, $W^{\circ}$ is a subspace of $V^{\circledast}$.

  \item When $(F,\pps,\bullet)$ is an $F$-like line, $((V,+_{V}),\cdot_{V})$ is a strongly regular near-vector space over $(F,\cdot)$, and $(V, \langle -,- \rangle)$ is an inner product near-space on $(((V,+_{V}),\cdot_{V}),(F,\cdot))$ over $(F,\pps,\bullet)$, then for any $C \subseteq V$ we define the {\sf orthogonal complement of $C$} to be the annihilator of $C$ in $V^{\curlyvee}$,
  \[
    C^{\pmb{\perp}} := \{ \varphi \in V^{\curlyvee} \mid \varphi(c) = 0 \text{ for all } c \in C \}.
  \]
  In particular, $C^{\pmb{\perp}}$ is a subspace of $V^{\curlyvee}$.
\end{enumerate}
\end{definlem}

Before describing bases of annihilators, we first establish a separation property for near-vector spaces: 
any vector lying outside a proper subspace can be separated from it by a functional in the dual that vanishes on the subspace but not on the given vector. 
This result plays the same foundational role as in classical linear algebra. The proof follows the classical vector-space argument (cf.\ \cite[Lemma~3.25]{LA}) transported to the $F$-like line.

\begin{lemm}
Let $(F,+,\cdot)$ be a near-field, $V$ a finite-dimensional near-vector space over $F$, and $W$ a proper subspace of $V$. 
If $v_{0} \in V \setminus W$, then there exists $\psi \in V^{\circledast}$ such that $\psi(v_0) \neq 0$ and $\psi \in W^{\circ}$.
\end{lemm}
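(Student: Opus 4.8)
The plan is to imitate the classical separation argument, using the fact that a finite-dimensional near-vector space admits a finite scalar $F$-basis and that, by Lemma~\ref{dualbasis}, such a basis has a dual basis. First I would extend a basis of the proper subspace $W$ to a basis of $V$ in such a way that $v_0$ itself is one of the new basis vectors. Concretely: since $V$ is finite-dimensional, pick a finite scalar $F$-basis $\{w_1,\dots,w_k\}$ of $W$ (which exists because $W$ is itself a finite-dimensional near-vector space, being a subspace of a finite-dimensional one). Because $v_0\notin W=\operatorname{Span}(\{w_1,\dots,w_k\})$, the set $\{w_1,\dots,w_k,v_0\}$ is linearly independent, and one completes it to a finite scalar $F$-basis $\mathcal B=\{w_1,\dots,w_k,v_0,v_{k+2},\dots,v_n\}$ of $V$.

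Next I would take $\psi$ to be the coordinate functional dual to $v_0$ in this basis, i.e.\ $\psi:=v_0^{\circledast_{\mathcal B}}$ from Lemma~\ref{dualbasis}, extended $F$-linearly to all of $V$. By construction $\psi(v_0)=1\neq 0$, while $\psi(w_i)=0$ for $i=1,\dots,k$; then for an arbitrary $w\in W$, writing $w$ as an $F$-combination $\sum_{j}(\sum_i \alpha_{j,i}\cdot w_j)$ of the $w_j$ (using Lemma~\ref{spanlin}) and applying $F$-linearity of $\psi$ gives $\psi(w)=0$. Hence $\psi\in W^{\circ}$ and $\psi(v_0)\neq 0$, as required.

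The one point that needs care — and the main obstacle — is that in the near-vector space setting the notions of ``basis'', ``linear independence'' and ``span'' behave slightly differently from the classical case, so I must make sure that (a) a subspace of a finite-dimensional near-vector space is again finite-dimensional with a \emph{scalar} basis, and (b) a linearly independent set not spanning $V$ can be extended to a scalar $F$-basis. Both follow from the structure theory already cited in the excerpt: by \cite[Theorem~3.11]{MarquesMoore} an $F$-space is a near-vector space precisely when it admits a scalar basis, and the basis-extension property for finite-dimensional near-vector spaces is exactly the kind of statement recorded in \cite{MarquesMoore}. I would cite these and then the rest of the argument is the transported classical one; invoking Lemma~\ref{dualbasis} for the existence of the dual functional $v_0^{\circledast_{\mathcal B}}$ finishes the proof.

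Thus the proof is short: reduce to a basis extension in which $v_0$ is a basis vector, take the corresponding dual-basis functional, and read off both required properties directly from its definition. I expect no genuinely hard computation; the only thing to be vigilant about is quoting the near-vector space basis machinery correctly rather than assuming the classical statements verbatim.
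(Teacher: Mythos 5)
Your overall strategy (separate $v_0$ from $W$ by a dual-basis functional) is the one the paper intends, since the paper itself only points to the classical argument of \cite[Lemma~3.25]{LA}. However, there is a genuine gap in the specific route you take: you adjoin $v_0$ itself to a basis of $W$ and then complete to a (scalar) $F$-basis containing $v_0$. In a general finite-dimensional near-vector space this step fails. First, ``finite-dimensional'' here means admitting a finite \emph{scalar} basis, i.e.\ a basis of quasi-kernel elements, and an arbitrary $v_0\in V\setminus W$ need not lie in $Q(V)$, so no scalar basis can contain it. Second, even the preliminary claim that $v_0\notin\operatorname{Span}(\{w_1,\dots,w_k\})$ forces $\{w_1,\dots,w_k,v_0\}$ to be linearly independent (in the sense of Definition~\ref{linindep}) is not automatic: the classical proof isolates the coefficient of $v_0$ and divides by it, which requires a combination $\sum_j\beta_j\cdot v_0$ to collapse to a single scalar multiple $\gamma\cdot v_0$ — exactly the quasi-kernel property of $v_0$. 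Third, for the putative functional $v_0^{\circledast_{\mathcal B}}$ to be a well-defined \emph{additive} map into $(F,+)$ one again needs $v_0\in Q(V)$ with $+_{v_0}$ matching the chosen near-field addition; this is the compatibility issue the paper's remark on the dual already flags.

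The repair is to keep $v_0$ out of the basis. Take a finite scalar $F$-basis $\mathcal B_1=\{b_j\mid j\in J\}$ of $W$ and extend it to a finite scalar $F$-basis $\mathcal B=\{b_i\mid i\in I\}$ of $V$ (the configuration already assumed in Lemma~\ref{dualdim}). Using Lemma~\ref{spanlin}, write $v_0=\sum_{i\in I}c_i$ with $c_i=\sum_{l}\alpha_{i,l}\cdot b_i\in\operatorname{Span}(b_i)$. Since $v_0\notin W$ and the $\operatorname{Span}(b_i)$ decompose $V$ by linear independence of $\mathcal B$, there is some $k\in I\setminus J$ with $c_k\neq 0$. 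Then $\psi:=b_k^{\circledast_{\mathcal B}}$ from Lemma~\ref{dualbasis} satisfies $\psi(b_j)=0$ for all $j\in J$, hence $\psi\in W^{\circ}$, while $\psi(v_0)=\psi(c_k)\neq 0$ because $b_k$ is a quasi-kernel basis vector and the action is free. This is also consistent with Lemma~\ref{dualdim}, which exhibits $W^{\circ}$ as spanned by exactly these $b_k^{\circledast_{\mathcal B}}$, $k\in I\setminus J$. Your version of the argument is fine in the strongly regular case ($Q(V)=V$), but the lemma is stated for arbitrary finite-dimensional near-vector spaces.
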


The next lemma gives an explicit description of the annihilator of a subspace via the dual basis. 
It also prepares the ground for the dimension formula below, relating \(\dim W\), \(\dim W^{\circ}\), and \(\dim V\).
For the next lemma, see also \cite[Exercise 3.26]{LA}.

\begin{lemm}
\label{dualdim}
Let $(F,+,\cdot)$ be a near-field, $V$ a finite-dimensional near-vector space over $F$, and $W$ a proper subspace of $V$. 
Suppose $\mathcal{B} = \{b_i \mid i \in I\}$ is a finite $F$-basis of $V$ and $\mathcal{B}_1 = \{ b_j \mid j \in J\} \subset \mathcal{B}$ is a finite $F$-basis of $W$ with $J\subseteq I$. 
Then
\[
\mathcal{B}_2^{\circledast} := \{\, b_k^{\circledast_{\mathcal{B}}}\mid k \in I \setminus J \,\}
\]
is a finite $F$-basis of $W^{\circ}$.
\end{lemm}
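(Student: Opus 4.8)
The strategy is to show that $\mathcal{B}_2^{\circledast}$ is a subset of $W^\circ$, that it is linearly independent, and that it spans $W^\circ$; the three together give the claim. First I would check containment: for $k\in I\setminus J$ and any $b_j\in\mathcal{B}_1$ (so $j\in J$, hence $j\neq k$), we have $b_k^{\circledast_{\mathcal{B}}}(b_j)=\delta_{k,j}=0$, so $b_k^{\circledast_{\mathcal{B}}}$ vanishes on the spanning set $\mathcal{B}_1$ of $W$. Since the functionals are $F$-linear on the $F$-space $V$ and $W=\operatorname{Span}(\mathcal{B}_1)$, the description of $\operatorname{Span}$ in Lemma~\ref{spanlin} shows $b_k^{\circledast_{\mathcal{B}}}(w)=0$ for every $w\in W$; thus $b_k^{\circledast_{\mathcal{B}}}\in W^\circ$ and $\mathcal{B}_2^{\circledast}\subseteq W^\circ$.

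Linear independence of $\mathcal{B}_2^{\circledast}$ is immediate: it is a subset of $\mathcal{B}^{\circledast}=\{b_i^{\circledast_{\mathcal{B}}}\mid i\in I\}$, which is linearly independent in $V^{\circledast}$ by Lemma~\ref{dualbasis}(1) (as $\mathcal{B}$ is a basis, hence linearly independent), and any subset of a linearly independent set is linearly independent. The remaining, and main, point is spanning. Let $\varphi\in W^\circ\subseteq V^{\circledast}$. By Lemma~\ref{dualbasis}(2), since $\mathcal{B}$ is a finite spanning set of $V$, we may write $\varphi=\sum_{i\in I}\varphi(b_i)\cdot b_i^{\circledast_{\mathcal{B}}}$. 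For $j\in J$ we have $b_j\in W$, so $\varphi(b_j)=0$; hence all terms with index in $J$ vanish and $\varphi=\sum_{k\in I\setminus J}\varphi(b_k)\cdot b_k^{\circledast_{\mathcal{B}}}$, which lies in $\operatorname{Span}(\mathcal{B}_2^{\circledast})$. Therefore $\mathcal{B}_2^{\circledast}$ spans $W^\circ$, and being also linearly independent, it is a finite $F$-basis of $W^\circ$.

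The one place requiring a little care — and the step I expect to be the main obstacle — is the spanning argument, specifically justifying that the coordinate expansion $\varphi=\sum_{i\in I}\varphi(b_i)\cdot b_i^{\circledast_{\mathcal{B}}}$ from Lemma~\ref{dualbasis}(2) can be used here. That lemma states $\mathcal{B}^{\circledast}$ spans $V^{\circledast}$; one should confirm that the specific expansion with coefficients $\varphi(b_i)$ is the one produced there (it is, as seen by evaluating both sides at each $b_i$ and using that $\mathcal{B}$ generates $V$ together with $F$-linearity), and that finiteness of $I$ makes the sum legitimate. Once this is in hand, dropping the $J$-indexed terms using $\varphi(b_j)=0$ for $j\in J$ is routine, and the proof concludes.
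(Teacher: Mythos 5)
Your proof is correct and follows exactly the classical argument that the paper itself intends (it gives no proof, referring instead to \cite[Exercise 3.26]{LA}): containment via vanishing on the generating set $\mathcal{B}_1$, linear independence as a subset of the independent family from Lemma~\ref{dualbasis}(1), and spanning via the coordinate expansion $\varphi=\sum_{i\in I}\varphi(b_i)\cdot b_i^{\circledast_{\mathcal{B}}}$ from Lemma~\ref{dualbasis}(2) with the $J$-indexed terms dropping out. Your care about justifying that expansion and about using Lemma~\ref{spanlin} to pass from vanishing on $\mathcal{B}_1$ to vanishing on all of $W$ is exactly what is needed in the near-linear setting.
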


For the following theorem, see \cite[Exercise 3.27]{LA} for reference of the classical case.
\begin{theo}[Dimension formula for $W^{\circ}$]
Let $(F,+,\cdot)$ be a near-field and $V$ a finite-dimensional near-vector space over $F$. 
If $W$ is a subspace of $V$, then
\[
\operatorname{dim}(W) + \operatorname{dim}(W^{\circ}) = \operatorname{dim}(V).
\]
\end{theo}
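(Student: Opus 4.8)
The plan is to deduce this dimension formula directly from Lemma~\ref{dualdim} together with the fact (established via Lemma~\ref{dualbasis} and \cite[Theorem~3.11]{MarquesMoore}) that $V^{\circledast}$ is again a finite-dimensional near-vector space over $F$. First I would fix a finite scalar $F$-basis $\mathcal{B} = \{b_i \mid i \in I\}$ of $V$; since $V$ is finite-dimensional and near-vector spaces admit scalar bases, this is available, and $\dim(V) = |I|$. I would then treat the two degenerate cases separately: if $W = \{0\}$, then $W^{\circ} = V^{\circledast}$ and the formula reduces to $\dim(V^{\circledast}) = \dim(V)$, which is the ``in particular'' clause of Lemma~\ref{dualbasis}; if $W = V$, then $W^{\circ} = \{0\}$ and the formula is trivial.

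For the main case, where $W$ is a proper nonzero subspace, I would choose a finite $F$-basis $\mathcal{B}_1 = \{b_j \mid j \in J\}$ of $W$. The subtlety here — and the one genuine obstacle — is that Lemma~\ref{dualdim} requires $\mathcal{B}_1 \subseteq \mathcal{B}$, i.e.\ the basis of $W$ must extend to the chosen basis of $V$. So I would first invoke a basis-extension result for finite-dimensional near-vector spaces (the analogue of the classical statement that any linearly independent set extends to a basis; this follows from the span/linear-independence machinery in Lemma~\ref{spanlin} and Definition~\ref{linindep}, using strong-regularity-type arguments as in \cite[Lemma~2.11]{MarquesMoore} where applicable, or the general finite-dimensional theory of \cite{MarquesMoore}) to pick $\mathcal{B}$ so that $\mathcal{B}_1 = \mathcal{B} \cap W$ with $J \subseteq I$. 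With this arrangement in place, Lemma~\ref{dualdim} gives that $\mathcal{B}_2^{\circledast} = \{b_k^{\circledast_{\mathcal{B}}} \mid k \in I \setminus J\}$ is a finite $F$-basis of $W^{\circ}$, whence $\dim(W^{\circ}) = |I \setminus J| = |I| - |J|$.

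Finally I would assemble the count: $\dim(W) = |J|$ (as $\mathcal{B}_1$ is a basis of $W$), $\dim(W^{\circ}) = |I| - |J|$, and $\dim(V) = |I|$, so $\dim(W) + \dim(W^{\circ}) = |J| + (|I| - |J|) = |I| = \dim(V)$. One technical point worth a line of care is that ``$\dim$'' for a finite-dimensional near-vector space could a priori mean minimal or maximal dimension; by the ``in particular'' clause of Lemma~\ref{dualbasis}, $V$ and $V^{\circledast}$ share the same minimal and maximal dimensions, and the basis-extension argument should be run consistently (e.g.\ all bases taken to realize the same extremal dimension, or one notes that in the finite-dimensional near-vector space setting these coincide for the spaces at hand), so the formula holds unambiguously. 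The main obstacle, to reiterate, is justifying that the basis of $W$ can be chosen inside a basis of $V$ — once that is secured, the rest is bookkeeping.
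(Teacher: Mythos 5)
Your proposal is correct and follows essentially the route the paper intends: the paper gives no explicit proof, but Lemma~\ref{dualdim} is stated expressly as preparation for this theorem, and the derivation is exactly your count $\dim(W^{\circ}) = |I|-|J|$ from the dual basis $\mathcal{B}_2^{\circledast}$. Your added care about basis extension and about the minimal/maximal dimension ambiguity addresses gaps the paper itself leaves implicit, so it strengthens rather than diverges from the intended argument.
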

\section{A definition of an angle }
\vspace{0cm}
 A finer notion than orthogonality is that of an \emph{angle}. 
In the classical setting, angles are defined via the Cauchy--Schwarz inequality; for inner near–vector spaces, an analogue was established in Theorem~\ref{cauchy}. 
In this section, we extend the notion of angle to the near–vector space context and record its basic properties.

Let $\pps$ and $\bullet$ be binary operations on $F$, so that $((F,\pps,\bullet),\psi)$ is an $F$-like line. 
Let $((V,+_{V}),\cdot_{V})$ be a strongly regular near-vector space over $(F,\cdot)$, let $(V,\langle-,-\rangle)$ be an inner product near-space over $(F,\pps,\bullet)$, and let $\|\!-\!\|$ be the norm from Lemma~\ref{normassociated}. 
By Theorem~\ref{cauchy}, for all nonzero $u,v\in V$ we have
\[
\mathbf{Re}\!\big(\langle u,v\rangle\big)\;\bullet\;\|u\|^{\bm{-1}}\;\bullet\;\|v\|^{\bm{-1}} \in [-1,1],
\]
which permits the following definition.

\begin{defin}
Let $\pps$ and $\bullet$ be binary operations on $F$, so that $((F,\pps,\bullet),\psi)$ is an $F$-like line. 
Let $((V,+_{V}),\cdot_{V})$ be a strongly regular near-vector space over $(F,\cdot)$, let $(V,\langle-,-\rangle)$ be an inner product near-space over $(F,\pps,\bullet)$, and let $\|\!-\!\|$ be the norm from Lemma~\ref{normassociated}. 
For nonzero $u,v\in V$, we define the \emph{angle} of the pair $(u,v)$ by
\[
\angle(u,v)\;:=\;\arccos\!\Big(\,\mathbf{Re}\!\big(\langle u,v\rangle\big)\;\bullet\;\|u\|^{\bm{-1}}\;\bullet\;\|v\|^{\bm{-1}}\,\Big).
\]
By definition, $\angle(u,v)\in[0,\pi]$.
\end{defin}

We now record several properties of the angle that closely parallel the classical setting. 
The proofs proceed analogously, adapted to the structure of the $F$-like line.

\begin{properties}
Let $\pps$ and $\bullet$ be binary operations on $F$, so that $((F,\pps,\bullet),\psi)$ is an $F$-like line. 
Let $((V,+_{V}),\cdot_{V})$ be a strongly regular near-vector space over $(F,\cdot)$, let $(V,\langle-,-\rangle)$ be an inner product near-space over $(F,\pps,\bullet)$, and let $\|-\|$ be the norm from Lemma~\ref{normassociated}. 
For $u,v \in V$ with $u,v \neq 0$, the following hold:
\begin{enumerate}
  \item $u \pmb{\perp} v$ if and only if $\angle(u,v) = \dfrac{\pi}{2}$;
  \item If $F=\mathbb{R}$, then for all $\alpha,\beta \in \mathbb{R}^{*}$ with $\alpha$ and $\beta$ of the same sign, we have 
        $\angle(u,v) = \angle(\alpha \cdot_{V} u,\, \beta \cdot_{V} v)$;
  \item $\angle(u,v) = \angle(v,u)$;
  \item $\displaystyle 
        \|\,u -_{V} v\,\|^{\bm{2}} 
        = \|u\|^{\bm{2}} \mms \bm{2} \bullet \|u\| \bullet \|v\| \bullet \cos\!\big(\angle(u,v)\big) \pps \|v\|^{\bm{2}}.$
\end{enumerate}
\end{properties}

\section{Example of an inner product on a multiplicative near-vector space}

In this section, we construct a broad class of inner product near-spaces. 
These examples illustrate how certain classical norms, those that do not give rise to Hilbert spaces, can naturally be recovered within the near-vector space framework.

\begin{assumption}[Notations]\label{ass1}
Throughout this section, we assume the following:
\begin{itemize}
    \item[\textbullet] $F \in \{\mathbb{R}, \mathbb{C}\}$;
    
    \item[\textbullet] $V$ is a vector space over $(F, +, \cdot)$, and for each $i \in I$, the subspace $W_i \subseteq V$ is equipped with a basis $\mathcal{B}_i$ over $F$;
    
    \item[\textbullet] For each $i \in I$, define $\theta_i \colon W_i \to F^{\mathcal{B}_i}$ by mapping each basis vector $b \in \mathcal{B}_i$ to $(\delta_{b,c})_{c \in \mathcal{B}_i}$ and extending linearly. Then $\theta_i$ is a vector space isomorphism over $(F,+,\cdot)$, and we define $\boldsymbol{\theta} = (\theta_i)_{i \in I}$;
    
    \item[\textbullet] $(F^{\mathcal{B}_i}, \langle -,- \rangle_i)_{i \in I}$ is a family of inner product near-vector spaces over $(F, +_{\lambda \circ \varphi^{-1}}, \cdot)$;
    
    \item[\textbullet] For each $i \in I$, let $\|{-}\|_i$ denote the norm associated to the inner product $\langle -,- \rangle_i$, as defined in Definition-Lemma~\ref{normassociated};
    
    \item[\textbullet] Let $\mathcal{B} := \bigcup_{i \in I} \mathcal{B}_i$;
    
    \item[\textbullet] For each $b \in \mathcal{B}$, let $\rho_b$ be a multiplicative automorphism of $F$;
    
    \item[\textbullet] The maps $\varphi$ and $\lambda$ are multiplicative automorphisms of $F$ that preserve complex conjugation (i.e., when $F = \mathbb{C}$, we have $\varphi(\overline{\alpha}) = \overline{\varphi(\alpha)}$, and similarly for $\lambda$);
    
    \item[\textbullet] The maps $\varphi$ and $\lambda$ are order-preserving over $\mathbb{R}^{+}$: for all $\alpha, \beta \in \mathbb{R}^{+}$, if $\alpha \leq \beta$ then $\varphi(\alpha) \leq \varphi(\beta)$ and $\lambda(\alpha) \leq \lambda(\beta)$;
    
    \item[\textbullet] For each $i \in I$, define $\boldsymbol{\rho}_i := (\rho_b)_{b \in \mathcal{B}_i}$, $\boldsymbol{\sigma}_i := (\varphi \circ \rho_b^{-1})_{b \in \mathcal{B}_i}$, and $\widetilde{\boldsymbol{\sigma}_i} := (\varphi \circ \rho_b^{-1})_{b \in \mathcal{B}_i}\theta_i$;
    
    \item[\textbullet] Define $\boldsymbol{\rho} := (\boldsymbol{\rho}_i)_{i \in I}$, $\boldsymbol{\sigma} := (\boldsymbol{\sigma}_i)_{i \in I}$, and $\widetilde{\boldsymbol{\sigma}} := (\widetilde{\boldsymbol{\sigma}_i})_{i \in I}$;
    
    \item[\textbullet] To simplify notation, for each $i \in I$, define binary operations $+_{\widetilde{\boldsymbol{\sigma}_i}}$ and $\cdot_{\widetilde{\boldsymbol{\rho}_i}}$ on $W_i$ by:
    \[
    u_i +_{\widetilde{\boldsymbol{\sigma}_i}} v_i := \theta_i^{-1}\big(\theta_i(u_i) +_{\boldsymbol{\sigma}_i} \theta_i(v_i)\big), \quad
    \alpha \cdot_{\widetilde{\boldsymbol{\rho}_i}} u_i := \theta_i^{-1}\big(\alpha \cdot_{\boldsymbol{\rho}_i} \theta_i(u_i)\big),
    \]
    for all $u_i, v_i \in W_i$ and $\alpha \in F$.
    
    \item[\textbullet] Similarly, define binary operations $+_{\widetilde{\boldsymbol{\sigma}}}$ and $\cdot_{\widetilde{\boldsymbol{\rho}}}$ on $W := \bigoplus_{i \in I} W_i$ by:
    \[
    u +_{\widetilde{\boldsymbol{\sigma}}} v := \boldsymbol{\theta}^{-1}\big(\boldsymbol{\theta}(u) +_{\boldsymbol{\sigma}} \boldsymbol{\theta}(v)\big), \quad
    \alpha \cdot_{\widetilde{\boldsymbol{\rho}}} u := \boldsymbol{\theta}^{-1}\big(\alpha \cdot_{\boldsymbol{\rho}} \boldsymbol{\theta}(u)\big),
    \]
    for all $u, v \in W$ and $\alpha \in F$;
    
    \item[\textbullet] Define $\eta_i := (\theta_i, \varphi, \boldsymbol{\rho}_i)$, for each $i \in I$, and $\boldsymbol{\eta} := (\boldsymbol{\theta}, \varphi, \boldsymbol{\rho})$;
    
    \item[\textbullet] Then, $W_i^{\eta_i}$ denotes the near-vector space $\big(W_i, +_{\widetilde{\boldsymbol{\sigma}_i}}, \cdot_{\widetilde{\boldsymbol{\rho}_i}}\big)$ over $(F, \cdot)$, and $W^{\boldsymbol{\eta}}$ denotes the near-vector space $\big(W, +_{\widetilde{\boldsymbol{\sigma}}}, \cdot_{\widetilde{\boldsymbol{\rho}}}\big)$ over $(F, \cdot)$.
\end{itemize}
\end{assumption}

\begin{rem}\label{multautpreserve}
If $\kappa$ is an order-preserving multiplicative automorphism over $\mathbb{R}^{+}$, then the following hold:
\begin{enumerate}
    \item $\kappa(0) = 0$;
    \item $\kappa(\mathbb{R}^{+}) \subseteq \mathbb{R}^{+}$ and $\kappa^{-1}(\mathbb{R}^{+}) \subseteq \mathbb{R}^{+}$;
    \item $\kappa(\mathbb{R}^{+} \setminus \{0\}) \subseteq \mathbb{R}^{+} \setminus \{0\}$ and $\kappa^{-1}(\mathbb{R}^{+} \setminus \{0\}) \subseteq \mathbb{R}^{+} \setminus \{0\}$.
\end{enumerate}
\end{rem}
We now establish an isomorphism between our constructed near-vector spaces and multiplicative near-vector spaces.

\begin{lemm}\label{innerprod}
With the notation from Assumption~\ref{ass1}, we have
\[
W_i^{\eta_i} \simeq F^{\boldsymbol{\sigma}_i, \boldsymbol{\rho}_i}
\]
via the isomorphism $(\theta_i, \operatorname{id})$. Let $\boldsymbol{\iota}_i \colon F^{\boldsymbol{\sigma}_i, \boldsymbol{\rho}_i} \to F^{\boldsymbol{\sigma}, \boldsymbol{\rho}}$ denote the canonical embedding, and define
\[
\boldsymbol{\theta} := \bigoplus_{i \in I} \boldsymbol{\iota}_i \circ \theta_i.
\]
Then we have:
\[
W^{\boldsymbol{\eta}} \simeq F^{\boldsymbol{\sigma}, \boldsymbol{\rho}}
\]
via the isomorphism $(\boldsymbol{\theta}, \operatorname{id})$.

In particular, for each $i \in I$, both $W_i^{\eta_i}$ and $W^{\boldsymbol{\eta}}$ are multiplicative, strongly regular near-vector spaces.
\end{lemm}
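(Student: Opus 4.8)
The plan is to unwind the definitions in Assumption~\ref{ass1} and reduce everything to the componentwise structure of multiplicative near-vector spaces from Definition-Lemma~\ref{multautnvs}. First I would verify the local isomorphism $W_i^{\eta_i} \simeq F^{\boldsymbol{\sigma}_i,\boldsymbol{\rho}_i}$. By construction $\theta_i\colon W_i\to F^{\mathcal{B}_i}$ is an $F$-vector space isomorphism, and the operations $+_{\widetilde{\boldsymbol{\sigma}_i}}$ and $\cdot_{\widetilde{\boldsymbol{\rho}_i}}$ on $W_i$ were \emph{defined} by transporting $+_{\boldsymbol{\sigma}_i}$ and $\cdot_{\boldsymbol{\rho}_i}$ along $\theta_i$; hence $\theta_i$ automatically intertwines them, so $(\theta_i,\operatorname{id})$ is a homomorphism of near-vector spaces in the sense of Definition~\ref{homo}, and it is bijective since $\theta_i$ is. Here one must check that $F^{\mathcal{B}_i}$ equipped with $+_{\boldsymbol{\sigma}_i}$ and $\cdot_{\boldsymbol{\rho}_i}$ is literally $F^{\boldsymbol{\sigma}_i,\boldsymbol{\rho}_i}$: this is just the definition of the multiplicative near-vector space indexed by $\mathcal{B}_i$ with the tuples $\boldsymbol{\sigma}_i=(\varphi\circ\rho_b^{-1})_{b\in\mathcal{B}_i}$ and $\boldsymbol{\rho}_i=(\rho_b)_{b\in\mathcal{B}_i}$ (the asymptotic condition $\asymp\mathbf{0}$ is vacuous since $\mathcal{B}_i$ may be infinite but each vector of $W_i$ has finite support).

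Next I would assemble the global statement. The direct sum $W=\bigoplus_{i\in I}W_i$ carries $+_{\widetilde{\boldsymbol{\sigma}}}$ and $\cdot_{\widetilde{\boldsymbol{\rho}}}$, again defined by transport along $\boldsymbol{\theta}$. With $\boldsymbol{\iota}_i\colon F^{\boldsymbol{\sigma}_i,\boldsymbol{\rho}_i}\to F^{\boldsymbol{\sigma},\boldsymbol{\rho}}$ the canonical embedding onto the block of coordinates indexed by $\mathcal{B}_i\subseteq\mathcal{B}$, the map $\boldsymbol{\theta}=\bigoplus_i \boldsymbol{\iota}_i\circ\theta_i$ sends $W$ bijectively onto $\bigoplus_i F^{\boldsymbol{\sigma}_i,\boldsymbol{\rho}_i}$, which is exactly $F^{\boldsymbol{\sigma},\boldsymbol{\rho}}$ since an element of $\prod_{b\in\mathcal{B}}F^{\sigma_b,\rho_b}$ asymptotic to $\mathbf{0}$ has finite support, hence lies in all but finitely many blocks trivially. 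Because the global operations were transported along $\boldsymbol{\theta}$ from the componentwise operations on $F^{\boldsymbol{\sigma},\boldsymbol{\rho}}$, the pair $(\boldsymbol{\theta},\operatorname{id})$ is an $F$-isomorphism of near-vector spaces. One subtlety to watch: the indices $\mathcal{B}_i$ need not be disjoint, so $\mathcal{B}=\bigcup_i\mathcal{B}_i$ may identify coordinates; I would either assume the $\mathcal{B}_i$ are taken in a disjoint copy (as the direct-sum notation $W=\bigoplus W_i$ already signals) or note that $\rho_b$ is well-defined on $\mathcal{B}$ precisely because it is specified on the union, so no inconsistency arises.

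Finally, the ``in particular'' clause follows for free: $F^{\boldsymbol{\sigma},\boldsymbol{\rho}}$ is by Definition-Lemma~\ref{multautnvs} a near-vector space built componentwise from the near-fields $F^{\sigma_b,\rho_b}=(F,+_{\sigma_b},\cdot)$, and each such near-field has $Q(F^{\sigma_b,\rho_b})=F$, so $Q(F^{\boldsymbol{\sigma},\boldsymbol{\rho}})=F^{\boldsymbol{\sigma},\boldsymbol{\rho}}$, i.e.\ it is strongly regular; being $F$-isomorphic to it, $W^{\boldsymbol{\eta}}$ (and likewise each $W_i^{\eta_i}$) is strongly regular. Multiplicativity is immediate from Definition~\ref{multNVS}. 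I do not expect a genuine obstacle here—the content is entirely bookkeeping—so the main care is simply in matching the tuple notation of Assumption~\ref{ass1} to that of Definition-Lemma~\ref{multautnvs} and in handling the possibly-infinite, possibly-overlapping index sets $\mathcal{B}_i$ correctly; the one place to be slightly attentive is checking that the transported operations genuinely coincide with the componentwise ones rather than merely being conjugate to them, which holds by the very definition of $+_{\widetilde{\boldsymbol{\sigma}}}$ and $\cdot_{\widetilde{\boldsymbol{\rho}}}$.
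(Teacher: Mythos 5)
Your treatment of the two isomorphisms $W_i^{\eta_i}\simeq F^{\boldsymbol{\sigma}_i,\boldsymbol{\rho}_i}$ and $W^{\boldsymbol{\eta}}\simeq F^{\boldsymbol{\sigma},\boldsymbol{\rho}}$ matches the paper's proof: the operations on $W_i$ and $W$ are defined by transport along $\theta_i$ and $\boldsymbol{\theta}$, so the intertwining is tautological, and multiplicativity then follows by definition. That part is fine.

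There is, however, a genuine gap in your argument for strong regularity. You write that each component near-field satisfies $Q(F^{\sigma_b,\rho_b})=F$ and conclude that $Q(F^{\boldsymbol{\sigma},\boldsymbol{\rho}})=F^{\boldsymbol{\sigma},\boldsymbol{\rho}}$. The quasi-kernel of a direct sum is \emph{not} the direct sum of the quasi-kernels: an element with two nonzero coordinates $u_b,u_c$ lies in $Q$ only if, for all $\alpha,\beta$, the scalars $\gamma$ witnessing $\alpha\cdot u+\beta\cdot u=\gamma\cdot u$ in the $b$-th and $c$-th coordinates coincide, which forces the induced additions $+_{\sigma_b\circ\rho_b\circ\varphi_\gamma}$ and $+_{\sigma_c\circ\rho_c\circ\varphi_\gamma}$ to agree. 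For a generic multiplicative near-vector space this fails (e.g.\ $F=\mathbb{R}$ with $\sigma_1=\operatorname{id}$, $\sigma_2=\epsilon_3$, $\rho_1=\rho_2=\operatorname{id}$ gives $Q=\{(u,0)\}\cup\{(0,u)\}\subsetneq \mathbb{R}^2$), so your inference is false in general. The point of the lemma — and the step your proposal never uses — is the specific shape of the data in Assumption~\ref{ass1}: since $\boldsymbol{\sigma}_i=(\varphi\circ\rho_b^{-1})_{b\in\mathcal{B}_i}$, one has $\sigma_b\circ\rho_b=\varphi$ for \emph{every} $b\in\mathcal{B}$, hence by Definition-Lemma~\ref{multautnvs}(2) the additions $+_{\gamma\cdot_{\boldsymbol{\rho}}\mathbf{e}_j}$ are independent of $j$, all quasi-kernel elements carry the same addition, and only then does $Q(V)=V$ follow. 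This is exactly how the paper closes the argument, and without it the ``strongly regular'' claim is unproved.
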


\begin{proof}
Fix $i \in I$. Define the map
\[
\theta_i \colon W_i^{\eta_i} \to F^{\boldsymbol{\sigma}_i, \boldsymbol{\rho}_i}
\]
as before: each basis element $b \in \mathcal{B}_i$ is mapped to $(\delta_{b,c})_{c \in \mathcal{B}_i}$, extended linearly. By definition, $\theta_i$ is a bijective linear isomorphism.

Let $\alpha \in F$ and $u_i, v_i \in W_i^{\eta_i}$. Then:
\[
\theta_i(u_i +_{\widetilde{\boldsymbol{\sigma}_i}} v_i)
= \theta_i\left(\theta_i^{-1}\left(\theta_i(u_i) +_{\boldsymbol{\sigma}_i} \theta_i(v_i)\right)\right)
= \theta_i(u_i) +_{\boldsymbol{\sigma}_i} \theta_i(v_i),
\]
and
\[
\theta_i(\alpha \cdot_{\widetilde{\boldsymbol{\rho}_i}} u_i)
= \theta_i\left(\theta_i^{-1}\left(\alpha \cdot_{\boldsymbol{\rho}_i} \theta_i(u_i)\right)\right)
= \alpha \cdot_{\boldsymbol{\rho}_i} \theta_i(u_i).
\]
Thus, $\theta_i$ preserves the operations and defines an isomorphism:
\[
W_i^{\eta_i} \simeq F^{\boldsymbol{\sigma}_i, \boldsymbol{\rho}_i}.
\]

Now, let $\boldsymbol{\iota}_i \colon F^{\boldsymbol{\sigma}_i, \boldsymbol{\rho}_i} \to F^{\boldsymbol{\sigma}, \boldsymbol{\rho}}$ be the canonical embedding. Then, for each $i \in I$, we have:
\[
W_i^{\eta_i} \simeq \boldsymbol{\iota}_i(F^{\boldsymbol{\sigma}_i, \boldsymbol{\rho}_i}) \subseteq F^{\boldsymbol{\sigma}, \boldsymbol{\rho}}.
\]

Since $F^{\boldsymbol{\sigma}, \boldsymbol{\rho}} = \bigoplus_{i \in I} \boldsymbol{\iota}_i(F^{\boldsymbol{\sigma}_i, \boldsymbol{\rho}_i})$ and $W^{\boldsymbol{\eta}} = \bigoplus_{i \in I} W_i^{\eta_i}$, we define the map
\[
\boldsymbol{\theta} \colon W^{\boldsymbol{\eta}} \to F^{\boldsymbol{\sigma}, \boldsymbol{\rho}}, \quad
\boldsymbol{\theta}(u) := \bigoplus_{i \in I} \boldsymbol{\iota}_i(\theta_i(u_i)),
\]
which is an isomorphism. Hence,
\[
W^{\boldsymbol{\eta}} \simeq F^{\boldsymbol{\sigma}, \boldsymbol{\rho}}.
\]

By \cite[Theorem~3.16]{LSD2025}, we know that both $W_i^{\eta_i}$ and $W^{\boldsymbol{\eta}}$ are multiplicative near-vector spaces.

  We note that, for all $i \in I$, we have
\[
+_{\boldsymbol{\sigma}_i \circ \boldsymbol{\rho}_i} = +_\varphi, \quad \text{and} \quad +_{\boldsymbol{\sigma} \circ \boldsymbol{\rho}} = +_\varphi.
\]
By \cite[Definition–Lemma~3.11]{LSD2025}, we therefore have $+_{v}=+_{w}$, for all $v,w\in Q(V)^*$.
It then follows by Remark \ref{strongly} that $W_i^{\eta_i}$ and $W^{\boldsymbol{\eta}}$ are strongly regular.
\end{proof}

\begin{definlem} \label{inner}
 We adopt the assumptions and notations introduced in Assumptions \ref{ass1}. We define an {\sf inner product on \((W^{\boldsymbol{\eta}}, (F, \cdot))\) over \((F, +_{\lambda}, \cdot)
 \)}, denoted by $\langle -, - \rangle_{\mathcal{I}}$, where 
 $$\mathcal{I} = (\boldsymbol{\theta}, \boldsymbol{\sigma}, \boldsymbol{\rho}, \varphi, \lambda,W,(W_i,\langle-,-\rangle_{i})_{i \in I}),$$ to be the map  
\[
\langle -, - \rangle_{\mathcal{I}}\colon W \times W \longrightarrow F,
\]  
defined as follows: for any \(u, v \in W\), write  
\[
u = \sum_{i \in I} u_i, \quad v = \sum_{i \in I} v_i, \quad 
(u_i, v_i \in W_i \text{ for all } i \in I)
\]  
and we set  
\[
\langle u, v \rangle_{\mathcal{I}} = {}^{\lambda}\! \sum_{i \in I} \varphi^{-1} \left( \left\langle \widetilde{\boldsymbol{\sigma}_i}(u_i), \widetilde{\boldsymbol{\sigma}_i}(v_i) \right\rangle_i \right).
\]  

\end{definlem}
\begin{proof} Since $\lambda$ preserves complex conjugation, it follows immediately that $\lambda^{-1}$ does as well, and consequently so does $+_{\lambda}$.
We also use the fact that $\lambda $ and $\varphi$ is order preserving, that is, $\lambda(\mathbb{R}^{+}) = \mathbb{R}^{+}$ and $\varphi(\mathbb{R}^{+}) = \mathbb{R}^{+}$. Since $\varphi$ is a multiplicative automorphism and is order preserving, we know that $\varphi^{-1}$ is also order preserving, and  $\varphi^{-1}(0) = 0$. Next, we need to verify assumptions $I1.-I5.$ from Definition \ref{innerproductnvs} to prove that $\langle -,-\rangle_{\mathcal{I}}$ is indeed an inner product on \((W^{\boldsymbol{\eta}}, (F, \cdot))\) over \((((F, +_{\varphi}), \cdot),(F, \cdot))\). Let $\alpha \in F$ and $u,v,w \in W$ where $u = \sum_{i \in I}u_{i}, v = \sum_{i \in I}v_{i}$ and $w = \sum_{i \in I}w_{i}$ with $u_i, v_i, w_i\in W_i$ for all $i\in I$.     
\begin{enumerate}

    \item[\textbf{(I1)}] Since, by assumption, \( \varphi \) preserves complex conjugation, we obtain the following sequence of equalities:
    \begin{align*}
        \overline{\langle v,u\rangle_{\mathcal{I}}} 
        &= {}^{\lambda} \sum_{i \in I} \overline{\varphi^{-1}(\langle \widetilde{\boldsymbol{\sigma}_i}(v_{i}), \widetilde{\boldsymbol{\sigma}_i}(u_{i})\rangle_{i})} \\
        &= {}^{\lambda} \sum_{i \in I} \varphi^{-1}(\overline{\langle \widetilde{\boldsymbol{\sigma}_i}(v_{i}), \widetilde{\boldsymbol{\sigma}_i}(u_{i})\rangle_{i}}) \\
        &= {}^{\lambda} \sum_{i \in I} \varphi^{-1}(\langle \widetilde{\boldsymbol{\sigma}_i}(u_{i}), \widetilde{\boldsymbol{\sigma}_i}(v_{i})\rangle_{i}) \\
        &= \langle u, v \rangle_{\mathcal{I}}.
    \end{align*}

    \item[\textbf{(I2)}]
    \begin{align*}
        \langle \alpha \cdot_{ \boldsymbol{\widetilde{\rho}}}u,w\rangle_{\mathcal{I}} 
        &= {}^{\lambda} \sum_{i \in I}  \varphi^{-1} (\langle \boldsymbol{\widetilde{\sigma}}_{i}(\theta_{i}^{-1}(\alpha\cdot_{\boldsymbol{\rho}_{i}}\theta_{i}(u_{i}))),\boldsymbol{\widetilde{\sigma}}_{i}(v_{i})\rangle_{i}) \\
        &= {}^{\lambda} \sum_{i \in I} \varphi^{-1} (\langle \boldsymbol{\sigma}_{i}(\alpha\cdot_{\boldsymbol{\rho}_{i}}\theta_{i}(u_{i})),\boldsymbol{\widetilde{\sigma}}_{i}(v_{i})\rangle_{i}) \\
        &= {}^{\lambda} \sum_{i \in I} \varphi^{-1} (\langle\varphi(\alpha)\boldsymbol{\widetilde{\sigma}}_{i}(u_{i}),\boldsymbol{\widetilde{\sigma}}_{i}(v_{i})\rangle_{i}) \\
        &= {}^{\lambda} \sum_{i \in I} \varphi^{-1}(\varphi(\alpha)) \cdot \varphi^{-1}(\langle \boldsymbol{\widetilde{\sigma}}_{i}(u_{i}), \boldsymbol{\widetilde{\sigma}}_{i}(v_{i})\rangle_{i}) \\
        &= \alpha \cdot {}^{\lambda} \sum_{i \in I} \varphi^{-1}(\langle \boldsymbol{\widetilde{\sigma}}_{i}(u_{i}), \boldsymbol{\widetilde{\sigma}}_{i}(v_{i})\rangle_{i}) \\
        &= \alpha \cdot \langle u,v\rangle_{\mathcal{I}}.
    \end{align*}

    \item[\textbf{(I3)}]
    \begin{align*}
        \langle u +_{\widetilde{\boldsymbol{\sigma}}}v,w\rangle_{\mathcal{I}} 
        &= {}^{\lambda}\! \sum_{i \in I} \varphi^{-1} (\langle \widetilde{\boldsymbol{\sigma}_i}(u_i +_{\widetilde{\boldsymbol{\sigma}_i}}v_i) , \widetilde{\boldsymbol{\sigma}_i}(w_i)\rangle_i) \\
        &= {}^{\lambda}\! \sum_{i \in I} \varphi^{-1} (\langle \widetilde{\boldsymbol{\sigma}_i}(u_i) + {\widetilde{\boldsymbol{\sigma}_i}} (v_i) , \widetilde{\boldsymbol{\sigma}_i}(w_i)\rangle_i) \\
        &= {}^{\lambda}\! \sum_{i \in I} \varphi^{-1} ( \langle \widetilde{\boldsymbol{\sigma}_i}(u_i), \widetilde{\boldsymbol{\sigma}_i}(w_i)\rangle_i +_{\lambda \circ \varphi^{-1}} \langle {\widetilde{\boldsymbol{\sigma}_i}} (v_i), \widetilde{\boldsymbol{\sigma}_i}(w_i)\rangle_i ) \\
        &= {}^{\lambda}\! \sum_{i \in I} \left( \varphi^{-1}(\langle \widetilde{\boldsymbol{\sigma}_i}(u_i), \widetilde{\boldsymbol{\sigma}_i}(w_i)\rangle_i) +_{\lambda} \varphi^{-1}(\langle {\widetilde{\boldsymbol{\sigma}_i}} (v_i), \widetilde{\boldsymbol{\sigma}_i}(w_i)\rangle_i) \right) \\
        &= {}^{\lambda}\! \sum_{i \in I} \varphi^{-1} ( \langle \widetilde{\boldsymbol{\sigma}_i}(u_i), \widetilde{\boldsymbol{\sigma}_i}(w_i)\rangle_i) +_{\lambda} {}^{\lambda}\! \sum_{i \in I} \varphi^{-1} ( \langle {\widetilde{\boldsymbol{\sigma}_i}} (v_i), \widetilde{\boldsymbol{\sigma}_i}(w_i)\rangle_i) \\
        &= \langle u,w\rangle_{\mathcal{I}}+_{\lambda} \langle v,w \rangle_{\mathcal{I}}.
    \end{align*}

    \item[\textbf{(I4)}] By the properties of \( \langle -, - \rangle_i \), we know that \( \langle \widetilde{\boldsymbol{\sigma}_i}(u_{i}), \widetilde{\boldsymbol{\sigma}_i}(u_{i}) \rangle_i \in \mathbb{R}^+ \) for all \( i \in I \). By Remark~\ref{multautpreserve} 2., we conclude that
    \[
        \langle u,v \rangle_{\mathcal{I}} = {}^{\lambda} \sum_{i \in I} \varphi^{-1}(\langle \widetilde{\boldsymbol{\sigma}_i}(u_{i}), \widetilde{\boldsymbol{\sigma}_i}(v_{i})\rangle_{i}) \in \mathbb{R}^+.
    \]

    \item[\textbf{(I5)}] Suppose \( u = 0 \). Then \( u_i = 0 \) for all \( i \in I \). Thus, by the properties of \( \langle -,- \rangle_i \) and Remark~\ref{multautpreserve} 1., we have
    \[
        {}^{\lambda}\! \sum_{i \in I} \varphi^{-1} \left( \left\langle \widetilde{\boldsymbol{\sigma}_i}(u_i), \widetilde{\boldsymbol{\sigma}_i}(u_i) \right\rangle_i \right)
        = {}^{\lambda}\! \sum_{i \in I} \varphi^{-1} (0) = 0.
    \]
    Conversely, suppose \( u \neq 0 \). Since \( u = \sum_{i \in I} u_i \), there exists \( i_0 \in I \) such that \( u_{i_0} \neq 0 \). Then by (I4), we know that
    \[
        \left\langle \widetilde{\boldsymbol{\sigma}_{i_0}}(u_{i_0}), \widetilde{\boldsymbol{\sigma}_{i_0}}(u_{i_0}) \right\rangle_{i_0} \in \mathbb{R}^+ \setminus \{0\} \text{ and } \left\langle \widetilde{\boldsymbol{\sigma}_{i}}(u_{i}), \widetilde{\boldsymbol{\sigma}_{i}}(u_{i}) \right\rangle_{i} \in \mathbb{R}^+ \text{ for all } i \in I \setminus \{i_0\}.\] It then follows by Remark~\ref{multautpreserve} 3. that
    \[
        {}^{\lambda}\! \sum_{i \in I} \varphi^{-1} \left( \left\langle \widetilde{\boldsymbol{\sigma}_i}(u_i), \widetilde{\boldsymbol{\sigma}_i}(u_i) \right\rangle_i \right) \in \mathbb{R}^+ \setminus \{0\}.
    \]

\end{enumerate}

\end{proof}


\section{Recovering norms with inner products on strongly regular near-vector spaces}
\subsection{Preliminaries}
We recall the multiplicative automorphisms of \(F\in\{\mathbb R,\mathbb C\}\) (cf.~\cite[§2]{BM2024}).
Every \(z\in\mathbb C\) admits a polar decomposition \(z=rs\) with \(r\in\mathbb R^+ \) and
\(s\in\mathbb S:=\{w\in\mathbb C:\,|w|=1\}\). For \(\alpha\in\mathbb C\setminus i\mathbb R,\) define
\[
\begin{array}{rccc}
\epsilon_\alpha: & \mathbb C & \longrightarrow & \mathbb C\\[0.2em]
& z=rs\neq 0 & \longmapsto & r^\alpha s\\[0.2em]
& 0 & \longmapsto &  0
\end{array}
\qquad\text{and}\qquad
\begin{array}{rccc}
\overline{\epsilon_\alpha}: & \mathbb C & \longrightarrow & \mathbb C\\[0.2em]
& z=rs\neq0 & \longmapsto & r^\alpha \,\overline{s}\\[0.2em]
& 0 & \longmapsto &  0
\end{array}
\]
The maps \(\epsilon_\alpha\) and \(\overline{\epsilon_\alpha}\) are precisely all
continuous multiplicative automorphisms of \((\mathbb C^*,\cdot)\). Moreover, $\epsilon_{\alpha}^{-1} = \epsilon_{\frac{1-i \operatorname{Im}(\alpha)}{\operatorname{Re}(\alpha)}}$ and $\overline{\epsilon_{\alpha}}^{-1} = \overline{\epsilon_{\frac{1+i \operatorname{Im}(\alpha)}{\operatorname{Re}(\alpha)}}}$.

For \(\alpha\in\mathbb R\setminus\{0\}\), restricting to \(\mathbb R\), we continue to write \(\epsilon_\alpha\) for the corestriction \(\epsilon_\alpha:\mathbb R\to\mathbb R\).
In explicit form,
\[
\epsilon_\alpha:\mathbb R\longrightarrow\mathbb R,\qquad
x\longmapsto \operatorname{sgn}(x)\,|x|^{\alpha}
=
\begin{cases}
x^{\alpha}, & x\ge 0,\\[0.3em]
0 , & x=0 ,\\[0.3em]
-(-x)^{\alpha}, & x<0.
\end{cases}
\]
These \(\epsilon_\alpha\) are exactly all continuous multiplicative automorphisms of \((\mathbb R^*,\cdot)\).
We denote by $+_\alpha$ the addition associated with the continuous multiplicative automorphism $\epsilon_\alpha$ where $\alpha\in F^{*}$.

Let \(F\in\{\mathbb R,\mathbb C\}\) and \(I\) be an index set.
A sequence \((a_i)_{i\in\mathbb N}\) with entries in \(F\) is canonically identified with the function
\[
a:\mathbb N\to F,\qquad i\longmapsto a(i)=a_i.
\]
We therefore freely refer to \(a:\mathbb N\to F\) as a {\sf sequence} in \(F\). This functional viewpoint
is especially convenient for discussing sequences of sequences.

For the remainder of this paper, instead of writing  $+_{\epsilon_{p}}$ and $\cdot_{\epsilon_p}$, we simply write $+_{p}$ and $\cdot_{p}$.

Let \(+_\alpha\) be a (fixed) addition law on \(F\) induced by a chosen multiplicative automorphism (e.g.\ via \(\epsilon_\alpha\)).
For a sequence \((a_i)_{i\in\mathbb N},\) we write
\[
{}^{\alpha}\!\sum_{i=1}^{\infty} a_i
\;:=\;
\lim_{n\to\infty}\;{}^{\alpha}\!\sum_{i=1}^{n} a_i,
\]
whenever the limit exists (and leave it undefined otherwise).

In the same spirit, one defines the {\sf Riemann sums associated with \(+_\alpha\)} for a function
\(f:[a,b]\to F\) by replacing ordinary finite sums with \(+_\alpha\), and the
{\sf integral with respect to \(+_\alpha\)} as the limit of such Riemann sums:
\[
{}^{^{^{\alpha}}}\!\!\!\!\!\int_a^b f(x)\,dx:=\epsilon_\alpha^{-1} \left( \int_a^b \epsilon_\alpha (f(x))\,dx\right),
\]
whenever the limit exists.

\subsection{Recovering finite norms}
\label{Lpnorms}
For the remainder of this paper, we denote the set $\{1,\cdots, n \}$ by $[n]$.
\paragraph{Recovering the \texorpdfstring{$\ell^p$}{lp} norms.}
Let \(I\in\bigl\{[n],\mathbb N\bigr\}\) and \(p\in \mathbb{R}^*\).
Consider \(W:=F^{I}\), the direct sum of its coordinate copies of \(F\).
We regard each copy of \(F\) as an inner product near-space with the sesquilinear form
\((\alpha,\beta)\mapsto\alpha\,\overline{\beta}\). Fix \(\lambda=\epsilon_{p}\), \(\sigma_i =\epsilon_{p/2}\) and \(\rho_i^{-1} =\epsilon_{1/2}\), for all $i\in I$; take
all remaining structural maps in Assumptions~\ref{ass1} to be identities. We denote the strongly regular near-vector space defined as such $W_{\ell^p}(I)$.

For \(u=(u_i)_{i\in I}\) and \(v=(v_i)_{i\in I}\) in \(F^{I}\), define
\[
\langle u,v\rangle_{\ell^p} \;:=\; \epsilon_{p}^{-1} \left( \!\sum_{i\in I}\,\epsilon_{p/2} \!\bigl(u_i\,\overline{v_i}\bigr) \right).
\]
Then, $\langle u,v \rangle_{\ell_p}$ is an inner product on $(W_{\ell_{p}}(I), +_{p/2},\cdot_{2})$ over $(F,+_{p},\cdot)$.
We can define the associated norm by
\[
\|u\|_{\ell^p} \;=\; \Bigl(\sum_{i\in I} |u_i|^p\Bigr)^{1/p}.
\]
When \(I=\mathbb N\) and $p\in [1, \infty)$, we obtain the standard $\ell^p(F)$ space
\[
\ell^p(F) \;=\; \bigl\{\,a:\mathbb N\to F \;\big|\; \|a\|_{\ell^p}<\infty\,\bigr\},
\]
with a norm taking values in the \(F\)-like line \((F, +_p, \cdot)\), 
and equipped with an inner product defined over this \(F\)-like line. We recall that, by the Jordan--von Neumann theorem \cite{JordanNeumann}, for \(p\neq 2\) the \(\ell^{p}\)-norm is not induced by any inner product; in particular, \(\ell^{p}\) is not a Hilbert space. 
Nevertheless, by allowing a more flexible underlying structure (via the \(F\)-line \((F,+_{p},\cdot)\)), we have shown that these norms can be modeled within a Hilbert framework.

\paragraph{A note about \texorpdfstring{$\ell^{p,q}$}{lpq} norms.}
Let \(A=(a_{i,j})\in F^{m\times n}\). The mixed \((p,q)\)-norm can be understood as a two–stage aggregation.
First, for each column \(j\), take the \(\ell_p\)-length in \(W_{\ell_p}([m])\),
\[
c_j \;:=\; \Big(\sum_{i\in [m]}|a_{i,j}|^{p}\Big)^{1/p}\ \in\ (F,+_{p},\cdot),
\]
which produces the vector \((c_1,\dots,c_n)\in (F,+_{p},\cdot)^{\,n}\). Equivalently, this is the column-wise map
\[
W_{\ell_p}([m])^{\,n}\longrightarrow (F,+_{p},\cdot)^{\,n},\qquad
(a_{i,1},\dots,a_{i,n})_{i \in [m]}\longmapsto (c_1,\dots,c_n),
\]
interpretable as a generalized “inner-product–type” evaluation on \(W_{\ell_p}([m])^n\) with values in a product of \(F\)-like lines.

Second, aggregate across columns by taking the \(\ell_q\)-length in \(W_{\ell_q}([n])\),
\[
\|A\|_{\ell^{p,q}}
  \;=\;
  \Big(\sum_{j \in [n]} c_j^{\,q}\Big)^{1/q}
  \;=\;
  \Bigg(\sum_{j \in [n]}\Big(\sum_{i \in [m]}|a_{i,j}|^{p}\Big)^{\!q/p}\Bigg)^{\!1/q},
\]
which lands in the \(F\)-like line \((F,+_{q},\cdot)\).

In short, we apply \(p\)-aggregation within columns (into \((F,+_{p},\cdot)\)) and then \(q\)-aggregation across columns (into \((F,+_{q},\cdot)\)), yielding the standard mixed \((p,q)\)-norm. This may be viewed as the composition of the two aggregation steps described above. In conclusion, this process yields an aggregated inner–product–type composition. For any 
\(A=(a_{i,j})\) and \(B=(b_{i,j})\) in \(F^{m\times n}\), define
\[
\langle A,B\rangle_{\ell^{p,q}}
\;:=\;
\epsilon_{1/q}\!\left(
  \sum_{j \in [n]}
  \epsilon_{\,\frac{q}{2p}}\!\left(
    \sum_{i \in [m]}
    \epsilon_{\,\frac{p}{2}}\!\bigl(a_{i,j}\,\overline{b_{i,j}}\bigr)
  \right)
\right).
\]

\medskip

\
\paragraph{Recovering the \texorpdfstring{$\mathcal{L}^p$}{Lp} norms.}
Let \(I=[a,b]\) and let \(\mathcal{F}_{\mathrm{int}}(I)\) denote the space of integrable functions \(f:I\to F\), with \(F\in\{\mathbb R,\mathbb C\}\).
For each \(n\in\mathbb N\), consider the uniform partition
\[
\mathcal{P}_n:\quad a=x_0^{(n)}<x_1^{(n)}<\cdots<x_n^{(n)}=b,\qquad
x_k^{(n)}=a+\tfrac{k}{n}(b-a),
\]
and let \(S_n\) be the finite-dimensional subspace spanned by the step (indicator) functions
\(\chi_{[x_{k-1}^{(n)},\,x_k^{(n)})}\), \(k=1,\dots,n\).
On each \(S_n\), integration reduces to a finite sum:
\[
\int_I s(x)\,dx=\sum_{k \in [n]} c_k\,(x_k^{(n)}-x_{k-1}^{(n)}) \quad \text{for } s=\sum_{k \in [n]} c_k\,\chi_{[x_{k-1}^{(n)},\,x_k^{(n)})}.
\]
Refinement maps \(S_n \hookrightarrow S_m\) for \(m\ge n\) turn \((S_n)_{n\in\mathbb N}\) into a directed system whose inductive limit \(\bigcup_{n} S_n\) is the space of step functions, dense in \(\mathcal{F}_{\mathrm{int}}(I)\).
The inner product defined above can be defined on each of these $S_n$ and extends by density to an inner product on \(\mathcal{F}_{\mathrm{int}}(I)\). 

\medskip

On $\mathcal{F}_{\mathrm{int}}(I)$, fix the deformed scalar and addition operations
\[
(\alpha\cdot_{2} f)(x):=\epsilon_{2}(\alpha)\,f(x),
\qquad
(f+_{p/2}g)(x):=\epsilon_{2/p}\big(\epsilon_{p/2}(f(x))+\epsilon_{p/2}(g(x))\big),
\]
for \(\alpha\in F\) and \(f,g\in\mathcal{F}_{\mathrm{int}}(I)\).
On \(\bigl(\mathcal{F}_{\mathrm{int}}(I),+_{p/2},\cdot_{2}\bigr)\), define
\[
\langle f,g\rangle_{\mathcal L^p}
\;:=\;
{}^{^{^{p}}}\!\!\!\!\!\int_{I}\,
\epsilon_{1/2}\!\bigl(f(x)\,\overline{g(x)}\bigr)\,dx.
\]
This is an inner product on $\bigl(\mathcal{F}_{\mathrm{int}}(I),+_{p/2},\cdot_{2}\bigr)$ over $(F, +_{p},\cdot)$ (which is sesquilinear for \(F=\mathbb C\) and bilinear for \(F=\mathbb R\)) in this structure.
The associated norm is
\[
\|f\|_{\mathcal L^p}
\;:=\;
\left(\int_{I} |f(x)|^{p}\,dx\right)^{\!1/p},
\]
and the corresponding space is
\[
\mathcal L^p(F)
\;=\;
\bigl\{\,f\in\mathcal{F}_{\mathrm{int}}(I)\;\big|\;\|f\|_{\mathcal L^p}<\infty\,\bigr\}.
\]

As before, for \(p=2\) the operations \(+_{2}\) and \(\cdot_{2}\) coincide with the classical ones, and
\(\langle f,g\rangle_{\mathcal L^2}=\int_{I} f(x)\,\overline{g(x)}\,dx\).
For \(p\ne 2\), this construction provides a Hilbert framework modeling the \(\mathcal L^p\)-norm via the
\(F\)-line \((F,+_{p},\cdot)\), while the classical norm is not induced by any inner product on the usual
vector space structure, by the Jordan--von Neumann theorem \cite{JordanNeumann}.
\subsection{Recovering infinite norms}
\subsubsection{Computation of certain limits}
In this section, we will study limits involving multiplicative automorphisms, which will enable us to generalize the notion of generalized means. In this first lemma, whose proof is immediate, we observe the discontinuity of the limits  of a continuous multiplicative complex automorphism. This discontinuity will have an influence on the structure studied below.

\noindent Let \(z=x+iy\) with \(x,y\in\mathbb{R}\). Its modulus is \(|z|:=\sqrt{x^{2}+y^{2}}\).
For \(z\neq 0\), an argument of \(z\), denoted $\operatorname{arg} (z)$, is any \(\theta\in\mathbb{R}\) (defined modulo \(2\pi\)) such that
\[
\cos\theta=\frac{x}{|z|}\quad\text{and}\quad \sin\theta=\frac{y}{|z|}.
\]
At \(z=0\), \(\arg z\) is undefined; the principal argument is \(\operatorname{arg} (z)\in(-\pi,\pi]\).


\begin{lemm} \label{limits}
Let $F \in \{\mathbb{R},\mathbb{C}\}$ and $a\in \mathbb{C}$, we write $a = |a|e^{i \theta_a}$ where $\theta_a \in \mathbb{R}$. We have 
\begin{enumerate} 
\item  $ \underset{ |\alpha| \rightarrow 0}{\lim} \epsilon_\alpha ( a)=\left\{ \begin{array}{lll} e^{i \theta_a} & \text{ when } a \neq 0, \\ 0 &  \text{ when }  a=0.  \end{array} \right.$ 
\item  $ \underset{ \substack{ |\alpha| \rightarrow \infty,\\  \operatorname{Re}(\alpha) >0}}{\lim} \epsilon_\alpha ( a)=\left\{ \begin{array}{lll} 0 & \text{ when } |a|<1,\\ e^{i \theta_a} & \text{ when } |a|=1. \end{array} \right.$ 

We note that this limit is not defined when $|a|>1$. 

In particular, when $|a|>1$, $\underset{ \substack{ |\alpha| \rightarrow \infty,\\  \operatorname{Re}(\alpha) <0}}{\lim} |\epsilon_\alpha ( a)|=\infty$. 
\item  $ \underset{ \substack{ |\alpha| \rightarrow \infty,\\  \operatorname{Re}(\alpha) <0}}{\lim} \epsilon_\alpha ( a)=\left\{ \begin{array}{lll} e^{i \theta_a} & \text{ when } |a|=1,\\ 0 &  \text{ when }  |a|>1.  \end{array} \right.$ 

We note that this limit is not defined when $|a|<1$. 

In particular, when $|a|<1$, $\underset{ \substack{ |\alpha| \rightarrow \infty,\\  \operatorname{Re}(\alpha) <0}}{\lim} |\epsilon_\alpha ( a)|=\infty$. 
\end{enumerate}
\end{lemm}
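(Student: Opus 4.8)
The plan is to reduce everything to the polar form $a=|a|e^{i\theta_a}$ and the defining formula $\epsilon_\alpha(a)=|a|^{\alpha}e^{i\theta_a}$ for $a\neq 0$, where $|a|^{\alpha}:=e^{\alpha\log|a|}$ with $\log|a|\in\mathbb R$. Since $\epsilon_\alpha(0)=0$ for every $\alpha$, the case $a=0$ is trivial in all three parts, so throughout we assume $a\neq 0$ and write $r:=|a|>0$, $L:=\log r\in\mathbb R$. The entire statement then follows from the elementary fact that $|\epsilon_\alpha(a)|=|r^{\alpha}|=e^{\operatorname{Re}(\alpha)\,L}$ and $\epsilon_\alpha(a)=e^{\operatorname{Re}(\alpha)L}\,e^{i(\operatorname{Im}(\alpha)L+\theta_a)}$, so the modulus is governed entirely by the real part of $\alpha$ together with the sign of $L$ (equivalently, whether $r<1$, $r=1$, or $r>1$), while the phase wanders on the unit circle.

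For part (1): as $|\alpha|\to 0$ we have $\operatorname{Re}(\alpha)\to 0$ and $\operatorname{Im}(\alpha)\to 0$, hence $e^{\operatorname{Re}(\alpha)L}\to e^0=1$ and $e^{i\operatorname{Im}(\alpha)L}\to 1$, so $\epsilon_\alpha(a)\to e^{i\theta_a}$; there is no discontinuity issue here because both $\operatorname{Re}(\alpha)$ and $\operatorname{Im}(\alpha)$ genuinely go to $0$. For part (2), assume $\operatorname{Re}(\alpha)>0$ and $|\alpha|\to\infty$. If $r<1$ then $L<0$, so $\operatorname{Re}(\alpha)L\to -\infty$ and $|\epsilon_\alpha(a)|=e^{\operatorname{Re}(\alpha)L}\to 0$, giving $\epsilon_\alpha(a)\to 0$ regardless of the phase. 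If $r=1$ then $L=0$, so $\epsilon_\alpha(a)=e^{i\theta_a}$ is constant in $\alpha$ and the limit is $e^{i\theta_a}$. If $r>1$ then $L>0$ and $\operatorname{Re}(\alpha)L\to+\infty$, so $|\epsilon_\alpha(a)|\to\infty$ and no finite limit exists — this is exactly the asserted non-existence. Part (3) is the mirror image: with $\operatorname{Re}(\alpha)<0$ and $|\alpha|\to\infty$, the sign of $\operatorname{Re}(\alpha)L$ is flipped, so convergence to $0$ now occurs precisely when $r>1$, the value $e^{i\theta_a}$ persists when $r=1$, and divergence of the modulus to $\infty$ happens when $r<1$.

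One subtlety worth flagging explicitly (the paper's remark about ``discontinuity'') is that the convergence in parts (2) and (3) is \emph{not} uniform in the argument of $\alpha$: along sequences where $\operatorname{Re}(\alpha)$ stays bounded (say $\alpha=1+in$, $n\to\infty$) the modulus $e^{\operatorname{Re}(\alpha)L}$ need not tend to the claimed limit, so it is essential that the hypotheses force $|\alpha|\to\infty$ \emph{with} a sign condition on $\operatorname{Re}(\alpha)$; I would make sure the statement is interpreted as: the limit exists and equals the stated value whenever $|\alpha|\to\infty$ subject to $\operatorname{Re}(\alpha)>0$ (resp.\ $<0$) \emph{and} $\operatorname{Re}(\alpha)\to+\infty$ (resp.\ $-\infty$), or alternatively that one restricts to rays. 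The only real ``obstacle'' is therefore pinning down the precise quantifier on $\alpha$ so that $\operatorname{Re}(\alpha)L\to\pm\infty$ is actually guaranteed; once that is fixed, each case is a one-line estimate on $e^{\operatorname{Re}(\alpha)L}$ and $e^{i(\operatorname{Im}(\alpha)L+\theta_a)}$, and the proof is indeed ``immediate'' as claimed.
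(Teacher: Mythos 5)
The paper gives no proof of this lemma (it is declared ``immediate''), and your computation via $\epsilon_\alpha(a)=e^{\operatorname{Re}(\alpha)L}\,e^{i(\operatorname{Im}(\alpha)L+\theta_a)}$ with $L=\log|a|$ is precisely the intended one-line argument; it is correct in all cases. Your caveat that the hypotheses must actually force $\operatorname{Re}(\alpha)\to\pm\infty$ (rather than merely $|\alpha|\to\infty$ with a sign condition on $\operatorname{Re}(\alpha)$) is well taken and consistent with how the lemma is applied later in the paper, where the limits are always taken along rays $\arg(\alpha)=\Theta$ with $\Theta\neq\pm\pi/2$, so that $\operatorname{Re}(\alpha)=|\alpha|\cos\Theta\to\pm\infty$ automatically; note also that the ``in particular'' clause of item (2) should read $\operatorname{Re}(\alpha)>0$, as your case analysis makes clear.
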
 

We have the following result for the limit at infinity when considering the inverse of a continuous multiplicative complex automorphisms:
\begin{lemm}\label{invlimit}
Let $\Theta \in (-\pi /2, \pi /2) \cup (\pi /2, 3\pi/2)$.
\begin{enumerate}
\item When $\Theta \neq 0$, we have
$$\lim_{\substack{|\alpha| \rightarrow \pm \infty \\ \Theta = \operatorname{arg}(\alpha)}}  \epsilon_{\alpha}^{-1} = \epsilon_{- i \tan(\Theta)}.$$
\item When $\Theta = 0$, we have
$$\lim_{\substack{|\alpha| \rightarrow \pm \infty \\ \Theta = \operatorname{arg}(\alpha)}}  \epsilon_{\alpha}^{-1}(a) = \left\{
\begin{array}{lll}
\epsilon_{- i \tan(\Theta)} (a)& \text{when } a \neq 0, \\
0 & \text{otherwise}.
\end{array}
\right.$$
\end{enumerate}
\end{lemm}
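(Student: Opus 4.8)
The plan is to reduce everything to the explicit formula for the inverse recorded in the preliminaries, namely $\epsilon_\alpha^{-1}=\epsilon_{\frac{1-i\operatorname{Im}(\alpha)}{\operatorname{Re}(\alpha)}}$, and then take the limit \emph{inside} the subscript. First I would write $\alpha=|\alpha|e^{i\Theta}$ with $\Theta=\operatorname{arg}(\alpha)$ fixed, so that $\operatorname{Re}(\alpha)=|\alpha|\cos\Theta$ and $\operatorname{Im}(\alpha)=|\alpha|\sin\Theta$. The hypothesis $\Theta\in(-\pi/2,\pi/2)\cup(\pi/2,3\pi/2)$ guarantees $\cos\Theta\neq0$, so the exponent is well defined for all large $|\alpha|$ and equals
\[
\frac{1-i\,|\alpha|\sin\Theta}{|\alpha|\cos\Theta}
=\frac{1}{|\alpha|\cos\Theta}\;-\;i\tan\Theta.
\]
As $|\alpha|\to\pm\infty$ (with $\Theta$ fixed) the first term tends to $0$, so the exponent converges to $-i\tan\Theta$. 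This is the whole computational content; the two cases in the statement then come from how $\epsilon_\beta$ behaves as $\beta$ varies.

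For part~(1), $\Theta\neq0$ forces $\operatorname{Re}(-i\tan\Theta)=0$, i.e.\ the limiting exponent is purely imaginary. I would invoke continuity of $\beta\mapsto\epsilon_\beta$ on the set $\{\beta:\operatorname{Re}(\beta)\neq 0\}\cup i\mathbb{R}$ in the relevant sense: for purely imaginary $\beta=it$, $\epsilon_\beta(rs)=r^{it}s$ with $|r^{it}|=1$, and this is a genuine (if not order-preserving) continuous automorphism, approached continuously by $\epsilon_{\delta+it}$ as $\delta\to 0$. Since the exponents $\frac{1}{|\alpha|\cos\Theta}-i\tan\Theta$ have real part tending to $0$ and imaginary part constant, $\epsilon_{\epsilon_\alpha^{-1}\text{-exponent}}\to\epsilon_{-i\tan\Theta}$ pointwise on all of $\mathbb{C}$ (including at $0$, where every $\epsilon_\beta$ sends $0\mapsto 0$). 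This gives the unconditional (all $a$) statement of part~(1).

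For part~(2), $\Theta=0$ makes $-i\tan\Theta=0$, so the limiting exponent is $0$ and $\epsilon_0$ is \emph{not} an automorphism---this is exactly the discontinuity already flagged in Lemma~\ref{limits}(1). Here the exponent is the real number $\beta_\alpha:=\frac{1}{|\alpha|\cos\Theta}=\frac{1}{|\alpha|}$ (taking $\Theta=0$, $\cos\Theta=1$), which tends to $0^+$ through \emph{positive reals}. So $\epsilon_{\epsilon_\alpha^{-1}}(a)=\epsilon_{\beta_\alpha}(a)$ with $\beta_\alpha\to 0$, and Lemma~\ref{limits}(1) applies verbatim: for $a\neq0$, $\epsilon_{\beta_\alpha}(a)=|a|^{\beta_\alpha}e^{i\theta_a}\to e^{i\theta_a}=\epsilon_{0}(a)$ in the sense of that lemma (which is how $\epsilon_{-i\tan\Theta}=\epsilon_0$ is interpreted), while for $a=0$ we get $0$ for every $\beta_\alpha$, hence $0$ in the limit. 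This is precisely the case split in the statement.

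The main obstacle is purely expository: pinning down in what topology ``$\lim_{|\alpha|\to\pm\infty}\epsilon_\alpha^{-1}$'' is meant---pointwise on $\mathbb{C}$, as the preceding Lemma~\ref{limits} evidently intends---and being careful that $\pm\infty$ covers both $\Theta$ and $\Theta+\pi$ (equivalently $|\alpha|\to+\infty$ with $\operatorname{Re}(\alpha)$ of either sign), which is harmless here because the limiting exponent $-i\tan\Theta$ depends only on $\Theta\bmod\pi$ and the correction term $1/(|\alpha|\cos\Theta)$ vanishes in modulus regardless of sign. Once the formula $\epsilon_\alpha^{-1}=\epsilon_{\frac{1-i\operatorname{Im}\alpha}{\operatorname{Re}\alpha}}$ is substituted, there is essentially nothing left to prove beyond citing Lemma~\ref{limits}.
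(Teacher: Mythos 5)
Your proposal is correct and follows essentially the same route as the paper: substitute the explicit formula $\epsilon_{\alpha}^{-1}=\epsilon_{\frac{1-i\operatorname{Im}(\alpha)}{\operatorname{Re}(\alpha)}}$, write the exponent as $\tfrac{1}{|\alpha|\cos\Theta}-i\tan\Theta$, and let the real correction term vanish as $|\alpha|\to\infty$ (the paper phrases this as the direct computation $|a|^{1/(|\alpha|\cos\Theta)}\to 1$ for $a\neq 0$, which is exactly your continuity-in-the-exponent argument made pointwise), with the $a=0$ case handled by $\epsilon_{\alpha}^{-1}(0)=0$.
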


\begin{proof} 
The result follows from the fact that
\begin{align*}
      \epsilon_{ \alpha}^{-1}(a) =  \epsilon_{ \frac{1-i \operatorname{Im} (\alpha)}{\operatorname{Re}(\alpha)}} (a) &=   \epsilon_{ \frac{1-i |\alpha| \sin ( \Theta) }{|\alpha| \cos ( \Theta)} } (a)\\
    &=   \epsilon_{ \frac{1/ |\alpha|-i \sin ( \Theta) }{ \cos ( \Theta)} }(a)\\
    &= e^{\frac{1/|\alpha|-i\sin(\Theta)}{\cos(\Theta)}\ln|a| +i\theta_a} \\
    & = |a|^{\frac{1/|a|}{\cos(\Theta)}}|a|^{\frac{-i\sin(\Theta)}{\cos(\Theta)}} e^{i\theta_a} \\
    & = |a|^{\frac{1/|\alpha|}{\cos(\Theta)}}|a|^{-i\tan(\Theta)} e^{i\theta_a}.
\end{align*}
\end{proof} 

For each parameter \(\alpha\) such that there exists a continuous multiplicative automorphism of \(F\) (denoted \(\epsilon_\alpha\)), we obtain an induced addition \(+_\alpha\) on \(F\) that turns \((F,+_\alpha,\cdot)\) into a field. 
Viewing these \((F,+_\alpha,\cdot)\) as a family of field structures indexed by \(\alpha\), we consider the limiting behavior as \(\alpha \to 0\) or \(\alpha \to \pm\infty\). 
What algebraic/analytic system arises in these limits is precisely the question addressed by the next Definition–Lemma. 
In Subsection~\ref{infinitenorm} we will see that these limits yield \(\infty\)-norm inner product near-vector spaces, and in Subsection~\ref{weighted} they also provide a framework for weighted generalized means of families of complex numbers. We recall that a non-associative field is defined analogously to a field, except that the multiplication is not required to be associative.

\begin{definlem}\label{infty}
Let $\Theta \in (-\pi /2, \pi /2) \cup (\pi /2, 3\pi/2)$, and $a_t \in F$ for all $t \in [n]$. We write $a_t =|a_t| e^{i \theta_t}$ where $\theta_t \in \mathbb{R}$. For all $k \in I$, we define $[n]_{k} = \{ t \in [n] \mid |a_t| = |a_{k}| \}$ and $\sum_{t\in [n]_k} e^{i \theta_t} = r_k s_k$ where $r_k \in \mathbb{R}^+$ and $s_k\in \mathbb{S}$.

\begin{enumerate}
\item We define the {\sf limit of the continuous fields at $\infty$ in the direction $\Theta$}, denoted by $F_{\infty,\Theta}$, to be the system $(F, +_{\infty, \Theta},\cdot)$ where for all $a, b \in F$, the operation $+_{\infty, \Theta} $ is defined by
\begin{align*}
a+_{\infty,\Theta} b &= \underset{ \substack{|\alpha| \rightarrow \infty\\ \operatorname{Re}(\alpha) >0 \\ \operatorname{arg} (\alpha)=\Theta}  } {\lim} \epsilon_{\alpha}^{-1}( \epsilon_\alpha( a)  + \epsilon_\alpha (b) ).
\end{align*}
 $(F, +_{\infty, \Theta},\cdot)$ is a non-associative field.

We define
\begin{align*}
{}^{\infty, \Theta} \sum_{k\in [n]} a_k &:= \underset{ \substack{|\alpha| \rightarrow \infty \\ \operatorname{Re}(\alpha) >0 \\ \operatorname{arg} (\alpha)=\Theta} }{\lim} \epsilon_{\alpha}^{-1}\left( \sum_{k\in [n]} \epsilon_\alpha( a_k)  \right)\\
&= |a_M|\,r_M^{-i\operatorname{tan} (\Theta)} s_M.
\end{align*}

We suppose that there exists $M \in [n]$ such that $|a_{M}| = \operatorname{max} \{ |a_k|\,| k \in [n]:  a_k \neq 0 \text{ and } r_k \neq 0  \} $ exists. We have:
\begin{align*}
{}^{\infty, \Theta} \sum_{k\in [n]} a_k &= |a_M|\,r_M^{-i\operatorname{tan} (\Theta)} s_M.
\end{align*}
It is zero otherwise.
\item We define the {\sf limit of the continuous fields at $-\infty$ in the direction $\Theta$}, denoted by $F_{-\infty, \Theta}$, to be the system $(F, +_{-\infty, \Theta},\cdot)$ where for all $a, b \in F$, the operation $+_{-\infty, \Theta}$ is defined by
\begin{align*}
a+_{-\infty, \Theta} b &=\underset{ \substack{|\alpha| \rightarrow \infty\\ \operatorname{Re}(\alpha) <0 \\ \operatorname{arg} (\alpha)=\Theta}  }{\lim} \epsilon_{\alpha}^{-1}( \epsilon_\alpha( a)  + \epsilon_\alpha (b) ).
\end{align*}

$(F, +_{-\infty, \Theta},\cdot)$ is a non-associative field.

We define
\begin{align*}
{}^{-\infty, \Theta} \sum_{k\in [n]} a_k &:=  \underset{ \substack{|\alpha| \rightarrow \infty\\ \operatorname{Re}(\alpha) <0 \\ \operatorname{arg} (\alpha)=\Theta}  }{\lim} \epsilon_{\alpha}^{-1}\left( \sum_{k\in [n]} \epsilon_\alpha( a_k) \right) \\
&= |a_m|r_m^{-i \operatorname{tan} (\Theta) } s_m.
\end{align*}

We suppose that there exists $m \in [n]$, and $|a_{m}| = \operatorname{min} \{ |a_k \mid | k \in [n]:  |a_k|\neq 0 \text{ or } r_k\neq 0 \}$. We have:
\begin{align*}
{}^{-\infty, \Theta} \sum_{k\in [n]} a_k &= |a_m|r_m^{-i \operatorname{tan} (\Theta) } s_m.
\end{align*}
It is zero otherwise.
\end{enumerate}
\end{definlem}

\begin{proof} The fact that the system $(F, +_{\pm \infty, \Theta}, \cdot)$ satisfies all the field axioms except for the one stating the associativity of the operation $+_{\pm \infty, \Theta}$ follows from the properties of multiplicative automorphisms. Multiplicative automorphisms preserve the additive identity, send additive inverses to additive inverses, and also send multiplicative inverses to multiplicative inverses (see also \cite[Lemma 1.2.]{BM2024}). To illustrate the failure of associativity, consider the following counterexample. Suppose $a_{1} = 2$, $a_{2} = 3$, and $a_{3} = -3$. Then, $(2 +_{\infty} 3) +_{\infty} (-3) = 3 +_{\infty} (-3) = 0$
and, $2 +_{\infty} (3 +_{\infty} (-3)) = 2 +_{\infty} 0 = 2$. Clearly, the two results differ, and associativity is broken.

Let $a_1, \ldots, a_n \in F$. We observe that $[n] = [n]_{k_1} \bigcupdot \cdots \bigcupdot  [n]_{k_d}$ for some $k_i \in [n]$, $i \in [d]$, $1 \leq d \leq n$, with $|a_{k_1}| < |a_{k_2}| < \cdots < |a_{k_d}|$.


\begin{enumerate} 
\item Now, suppose that $M$ exists and $M = k_b$ where $b \in [d]$.
\begin{align*}
    \epsilon_{\alpha}^{-1} \left( \sum_{k\in [n]}  \epsilon_{\alpha} (a_k) \right) & = \epsilon_{\alpha}^{-1} \left( \sum_{t \in [b]} |a_{k_t}|^{\alpha} \sum_{l \in [n]_{k_t}} e^{i \theta_{l}} \right) \\
    &= \epsilon_{\alpha}^{-1}\left( \sum_{t\in [b]} |a_{k_t}|^{\alpha} r_{k_l} s_{k_l} \right)\\
    &= \epsilon_{\alpha}^{-1}\left(|a_{M}|^{\alpha} r_{M} s_{M} \left(\sum_{t \in [b-1]} \frac{|a_{k_t}|^{\alpha} r_{k_l} s_{k_l}}{|a_{M}|^{\alpha} r_{M} s_{M}} + 1 \right) \right)\\
    &= |a_M|\, r_M^{\frac{1-i \operatorname{Im} (\alpha)}{\operatorname{Re}(\alpha)}} s_M \epsilon_{\alpha}^{-1} \left( \sum_{t \in [b-1]}  \left(\frac{|a_{k_t}|}{|a_{M}|} \right)^{\alpha} \frac{r_{k_l} s_{k_l}}{ r_M s_M}  +  1 \right).
\end{align*}

Therefore, by Lemma \ref{limits}, since $\frac{|a_{k_t}|}{|a_{M}|} < 1$ for all $t \in [b-1]$, by definition of $M$, we have
$$ \underset{\substack{ |\alpha | \rightarrow \infty ,\\ \operatorname{Re}(\alpha) >0 \\ \operatorname{arg} (\alpha)=\Theta} }{\lim} \left(\frac{|a_{k_t}|}{|a_{M}|} \right)^{\alpha} = 0,$$ 
and thus, by Lemma \ref{invlimit}, since $r_M \neq 0$, by definition of $M$, we have
\begin{align*}
    {}^{\infty , \Theta} \sum_{k\in [n]} a_k &= \underset{\substack{ |\alpha | \rightarrow \infty ,\\ \operatorname{Re}(\alpha) >0 \\ \operatorname{arg} (\alpha)=\Theta}  }{\lim} |a_M|\, r_M^{\frac{1-i \operatorname{Im} (\alpha)}{\operatorname{Re}(\alpha)}} s_M \epsilon_{\alpha}^{-1} \left( \sum_{t \in [b-1]}  \left(\frac{|a_{k_t}|}{|a_{M}|} \right)^{\alpha} \frac{r_k s_k}{ r_M s_M}  +  1 \right)\\
    &= |a_M|\, r_M^{-i \operatorname{tan} (\Theta)}  s_M.
\end{align*}

\item Suppose that $m$ exists and $m= k_c$ where $c \in [n]$. By definition of $m$, we have for all $j \leq c$,
\begin{align*}
    \sum_{k \in [n]_{k_j}} \epsilon_\alpha ( a_k) &= |a_{k_j}|^\alpha \sum_{s \in [n]_{k_j}}  e^{i \theta_{s}} = 0, \text{and } a_{k} \neq 0 \text{ for all } k \in [n]_{k_j}, \text{ for all } j \in [k_c].
\end{align*}
We denote $[n]_{m} = [n]_{k_c} \bigcupdot \cdots \bigcupdot [n]_{k_d}$, so that
\begin{align*}
    \epsilon_{\alpha}^{-1} \left( \sum_{k \in [n]}  \epsilon_{\alpha} (a_k) \right)
    &= \epsilon_{\alpha}^{-1} \left( \sum_{l=c}^d |a_{k_l}|^{\alpha} \sum_{t \in [n]_{k_l}} e^{i \theta_{t}} \right)\\
    &= \epsilon_{\alpha}^{-1}\left( \sum_{l=c+1}^d |a_{k_l}|^{\alpha} r_{k_l} s_{k_l} \right)\\
    &= \epsilon_{\alpha}^{-1}\left(|a_{m}|^{\alpha} r_{m} s_{m} \left(\sum_{l=c+1}^d\frac{|a_{k_l}|^{\alpha} r_{k_l} s_{k_l}}{|a_{m}|^{\alpha} r_{m} s_{m}} + 1 \right) \right)\\
    &= |a_m|\, r_m^{\frac{1-i \operatorname{Im} (\alpha)}{\operatorname{Re}(\alpha)}} s_m \epsilon_{\alpha}^{-1} \left( \sum_{l=c+1}^d \left(\frac{|a_{k_l}|}{|a_{m}|} \right)^{\alpha}    \frac{r_{k_l} s_{k_l}}{ r_m s_m}  +  1 \right).
\end{align*}

Therefore,  by Lemma \ref{limits}, since $\frac{|a_{k_l}|}{|a_{m}|} > 1$, for all $l \in \{ c+1, \cdots , d\}$, by definition of $m$, we have
$$ \underset{  \substack{|\alpha | \rightarrow -\infty ,\\ \operatorname{Re}(\alpha) <0 \\ \operatorname{arg} (\alpha)=\Theta} }{\lim} \left(\frac{|a_{k_l}|}{|a_{m}|} \right)^{\alpha} = 0,$$ 
and thus, by Lemma \ref{invlimit}, since $r_m \neq 0$, by definition of $m$, we have
\begin{align*}
    {}^{-\infty , \Theta} \sum_{ k\in [n]} a_k &= \underset{ \substack{|\alpha | \rightarrow -\infty ,\\ \operatorname{Re}(\alpha) <0 \\ \operatorname{arg} (\alpha)=\Theta} }{\lim} |a_m|\, r_m^{\frac{1-i \operatorname{Im} (\alpha)}{\operatorname{Re}(\alpha)}} s_m \epsilon_{\alpha}^{-1} \left( \sum_{l=c+1}^d \left(\frac{|a_{k_l}|}{|a_{m}|} \right)^{\alpha}    \frac{r_{k_l} s_{k_l}}{ r_m s_m}  +  1 \right)\\
    &= |a_m|\, r_m^{-i \operatorname{tan} (\Theta)}  s_m.
\end{align*}
\end{enumerate}
\end{proof}

\begin{rem}
We maintain the notations from Definition-Lemma \ref{infty}. 
\begin{enumerate}
\item Let $a = |a| e^{i \theta_a}$ and $b = |b| e^{i \theta_b}$ be complex numbers where $\theta_a, \theta_b \in \mathbb{R}$. We have:
\begin{enumerate}
\item $a +_{\infty} b = 0$ (resp. $a +_{-\infty} b = 0$) if and only if $a = -b$;
\item when $a \neq -b$,
\begin{itemize}
\item if $|a| \neq |b|$, then $a +_{\infty} b = c$ (resp. $a +_{-\infty} b = c$) where $c \in \{a, b\}$ and $|c| = \max\{|a|, |b|\}$ (resp. $|c| = \min\{|a|, |b|\}$);
\item if $|a| = |b|$, we have:
$$a +_{\infty} b = |a| \sqrt{2(1 + \cos(\theta_a - \theta_b))}^{-i\operatorname{tan} (\Theta)} \operatorname{sgn}\left(2\cos\left(\frac{\theta_a - \theta_b}{2} \right)\right) e^{i\frac{\theta_a + \theta_b}{2}}.$$
Indeed, we have \begin{align*}
e^{i \theta_a} + e^{i \theta_b} &= e^{i\frac{\theta_a + \theta_b}{2}} \left(e^{i\frac{\theta_a - \theta_b}{2}} + e^{-i\frac{\theta_a - \theta_b}{2}}\right) = e^{i\frac{\theta_a + \theta_b}{2}} 2 \cos\left(\frac{\theta_a - \theta_b}{2}\right) \\
\end{align*}
and so 
$$r_m = r_M = \left|2 \cos\left(\frac{\theta_a - \theta_b}{2}\right)\right| = \sqrt{2(1 + \cos(\theta_a - \theta_b))}$$
and
$$s_m = s_M = \operatorname{sgn}\left(2\cos\left(\frac{\theta_a - \theta_b}{2} \right)\right) e^{i\frac{\theta_a + \theta_b}{2}};$$

\end{itemize}
\end{enumerate}
\item at $\Theta= -\pi / 2$, $\Theta = \pi/2$, and $\Theta = 3\pi/2$, the limit at $\infty$ and $-\infty$ is indeterminate;
\item when $F = \mathbb{R}$, $r_{M} = \left|\sum_{i \in [n]_M} \operatorname{sgn}(a_i) \right|$, $r_{m} = \left|\sum_{i \in [n]_m} \operatorname{sgn}(a_i) \right|$ and  $s_M = \operatorname{sgn} \left(\sum_{i \in [n]_M} \operatorname{sgn}(a_i)\right)$, $s_m = \operatorname{sgn} \left(\sum_{i \in [n]_m} \operatorname{sgn}(a_i)\right)$. Moreover, when all $a_i$ are non-negative real numbers and $\Theta$ is zero, we obtain that the limit at $\infty$ (resp. $-\infty$) is the minimum (resp. maximum) of the values times the number of times the maximum (resp. the minimum) appears.
\end{enumerate}
\end{rem}
We shall now proceed to define the limit systems for continuous fields at the origin.
\begin{definlem} \label{zerolimit} 
Let $\Theta \in (-\pi /2, \pi /2) \cup (\pi /2, 3\pi/2)$, and for all $k \in I$, let $a_k$ be elements in $F$, and define $[n]_0= \{ k \in [n] \mid a_k \neq 0\}$. We can express $a_k$ as $|a_k| e^{i \theta_k}$, where $\theta_k \in \mathbb{R}$ for all $k \in [n]$ and $$\sum_{k\in [n]_0} e^{i \theta_k} = r e^{i \theta},$$ where $r \in \mathbb{R}^{+} \setminus \{0\}$ and $\theta \in \mathbb{R}$. We introduce {\sf the limit of continuous fields at $0$ with direction $\Theta$}, denoted as $F_{0}$, as the system $(F, +_{0, \Theta}, \cdot)$, where for all $a, b \in F$, the operation $+_{0, \Theta}$ is defined as follows:
$$
a +_{0, \Theta} b = \lim_{\substack{|\alpha| \rightarrow 0 \\ \operatorname{arg} (\alpha)=\Theta }} \epsilon_{\alpha}^{-1}( \epsilon_\alpha( a)  + \epsilon_\alpha (b) ) =\lim_{\substack{|\alpha| \rightarrow 0 \\ \operatorname{arg} (\alpha)=\Theta }} \overline{\epsilon_{\alpha}}^{-1}( \overline{\epsilon_\alpha}( a)  + \overline{\epsilon_\alpha} (b) ).
$$ 

We also define
$${}^{0,\Theta} \sum_{k\in [n]} a_k := \lim_{\substack{|\alpha| \rightarrow 0 \\ \operatorname{arg} (\alpha)=\Theta }} \epsilon_{\alpha}^{-1}\left( \sum_{k\in [n]} \epsilon_\alpha( a_k) \right).$$

When $r=1$, we obtain $\sum_{k\in [n]}\cos(\theta - \theta_k) = 1$, and 
$${}^{0, \Theta} \sum_{k\in [n]} a_k = \prod_{ k \in [n]_0} |a_k|^{ \cos(\theta - \theta_k)(1+i\tan(\Theta))} e^{i\theta }.$$ 

In any other scenarios, the absolute value of ${}^{0, \Theta} \sum_{k \in [n]} a_k$ is infinity.

\end{definlem}
\begin{proof} 

Let $a_1, \cdots, a_n \in F$. To prepare for our proof, we first establish some essential results. When $\operatorname{Re}(\alpha) > 0$ and $|\alpha| \rightarrow 0$, we observe that $\frac{1- i \operatorname{Im}(\alpha)}{\operatorname{Re}(\alpha)}$ tends towards infinity. Similarly, for $\operatorname{Re}(\alpha) < 0$, $\frac{1- i \operatorname{Im}(\alpha)}{\operatorname{Re}(\alpha)}$ approaches negative infinity as $|\alpha| \rightarrow 0$.

In the following, we write $\sum_{k \in [n]} \epsilon_{\alpha}(a_k) = r_{\alpha}  e^{i \theta_{\alpha}}$ where $r_{\alpha}>0$ and $\theta_{\alpha}\in \mathbb{R}$. 
We have:
$$\epsilon_{\alpha}^{-1}\left(\sum_{k \in [n]} \epsilon_{\alpha}(a_k)\right) = r_{\alpha}^{\frac{1- i \operatorname{Im}(\alpha)}{\operatorname{Re}(\alpha)}} e^{i \theta_{\alpha}} = r_{\alpha}^{\frac{1}{\operatorname{Re}(\alpha)}} r_{\alpha}^{-i \tan \Theta} e^{i \theta_{\alpha}},$$
where $\theta_{\alpha} \in \mathbb{R}$. 
We define $[n]_0 = \{ k \mid a_k \neq 0 \}$, and through straightforward computation, we find that
\begin{align*}
    r_{\alpha} & = \sqrt{\left(\sum_{k \in [n]_0} |a_k|^{\alpha} \cos(\theta_k)\right)^2 + \left(\sum_{k \in [n]_0} |a_k|^{\alpha} \sin(\theta_k)\right)^2} = \sqrt{x_{\alpha}^{2} + y_{\alpha}^{2} },
\end{align*}
where $x_{\alpha} = \sum_{k \in [n]_0} |a_k|^{\alpha}\cos(\theta_k)$ and $y_{\alpha} = \sum_{k \in [n]_0} |a_k|^{\alpha}\sin(\theta_k)$.

Then,  
\begin{align*}
    &\lim_{\substack{|\alpha| \rightarrow 0 \\ \operatorname{arg} (\alpha)=\Theta }} r_{\alpha}\\ 
    & = \lim_{\substack{|\alpha| \rightarrow 0 \\ \operatorname{arg} (\alpha)=\Theta }} \left(\left(\sum_{k \in [n]_0} |a_k|^{\alpha} \cos(\theta_k)\right)^2 + \left(\sum_{k \in [n]_0} |a_k|^{\alpha} \sin(\theta_k)\right)^2 \right)^{1/2} \\
    & =  \lim_{\substack{|\alpha| \rightarrow 0 \\ \operatorname{arg} (\alpha)=\Theta }}   \left(\left(\sum_{k \in [n]_0}\lim_{\substack{|\alpha| \rightarrow 0 \\ \operatorname{arg} (\alpha)=\Theta }} |a_k|^{|\alpha|e^{i \theta_{\alpha}}} \cos(\theta_k)\right)^2 + \left(\sum_{k \in [n]_0}\lim_{\substack{|\alpha| \rightarrow 0 \\ \operatorname{arg} (\alpha)=\Theta }} |a_k|^{|\alpha|e^{i \theta_\alpha}} \sin(\theta_k)\right)^2 \right)^{1/2} \\
    & = \left(\left(\sum_{k \in [n]_0}\cos(\theta_k)\right)^2 + \left(\sum_{k \in [n]_0}\sin(\theta_k)\right)^2 \right)^{1/2} \\
    & = r.
    \end{align*}

Next, we want to compute the limit as $|\alpha| \rightarrow 0$ for the term $e^{i\theta_{\alpha}}$. We note that 
$$e^{i \theta_{\alpha}}=\frac{\sum_{k \in [n]} \epsilon_{\alpha}(a_k)}{\left|\sum_{k\in [n]} \epsilon_{\alpha}(a_k)\right|}.$$
Hence, 
\begin{align*}
    \lim_{\substack{|\alpha| \rightarrow 0 \\ \operatorname{arg} (\alpha)=\Theta }}e^{i\theta_{\alpha}} & = \lim_{\substack{|\alpha| \rightarrow 0 \\ \operatorname{arg} (\alpha)=\Theta }} \frac{1}{r_\alpha} \sum_{k \in [n]}\epsilon_{\alpha}(a_k) \\
    & =\frac{1}{r}\lim_{\substack{|\alpha| \rightarrow 0 \\ \operatorname{arg} (\alpha)=\Theta }}  \sum_{k \in [n]}\epsilon_{\alpha}(a_k) \\
    & = \frac{1}{r} \lim_{\substack{|\alpha| \rightarrow 0 \\ \operatorname{arg} (\alpha)=\Theta }} \sum_{k \in [n]}|a_k|^{|\alpha|e^{i\theta_\alpha}}e^{i\theta_k} \\
    & = \frac{1}{r} \sum_{k \in [n]_0}e^{i\theta_k} =
\frac{e^{i \theta}}{r}.
\end{align*}

%

Now, we analyze various cases for different values of the limit $\epsilon_{\alpha}^{-1}\left(\sum_{k \in [n]} \epsilon_{\alpha}(a_k)\right) $ as $ |\alpha| \rightarrow 0$:
\begin{enumerate}
\item When $r < 1$, $\lim_{\substack{|\alpha| \rightarrow 0 \\ \operatorname{arg} (\alpha)=\Theta }} |\epsilon_{\alpha}^{-1}\left(\sum_{k\in [n]} \epsilon_{\alpha}(a_k)\right)| = 0$. 
\item When $r > 1$,  $\lim_{\substack{|\alpha| \rightarrow 0 \\ \operatorname{arg} (\alpha)=\Theta }}|\epsilon_{\alpha}^{-1}\left(\sum_{k\in [n]} \epsilon_{\alpha}(a_k)\right)| =\infty$. 
\item When $r = 1$, we have $\lim_{\substack{|\alpha| \rightarrow 0 \\ \operatorname{arg} (\alpha)=\Theta }}r_{\alpha}^{-i \tan \Theta}=1$. We can express $r_{\alpha}^{\frac{1}{\operatorname{Re}(\alpha)}}$ as $e^{\frac{\ln(r_{\alpha})}{\operatorname{Re}(\alpha)}}$. We have $\operatorname{lim}_{|\alpha | \rightarrow 0 } \operatorname{ln} ( r_\alpha)  =0$ and $\operatorname{lim}_{|\alpha | \rightarrow 0 } \operatorname{Re}(\alpha)=0$, so we can apply L'Hopital's rule on $\frac{  \operatorname{ln} ( r_\alpha ) }{\operatorname{ Re}(\alpha)} = \frac{ \operatorname{ln} ( r_\alpha ) }{|\alpha| \cos (\Theta)}$ on the variable $|\alpha|$. We first calculate the derivative of $\ln(r_\alpha)$:
\begin{align*}
   & \frac{d}{d|\alpha|}\ln(r_\alpha) \\
   & =  \frac{1}{2r_{\alpha}^2} \frac{d}{d|\alpha|} \left(x_{\alpha}^2 + y_{\alpha}^2 \right)  \\
   & = \frac{1}{2r_{\alpha}^2}  \left( 2 x_{\alpha} \left( \sum_{j \in [n]_0} \ln(|a_j|)|a_j|^{|\alpha|e^{i \Theta}}\cos(\theta_j) \right) + 2 y_{\alpha} \left(\sum_{j \in [n]_0} \ln(|a_j|)|a_j|^{|\alpha|e^{i \Theta}}\sin(\theta_j) \right) \right)   \\
   & = \frac{1}{r_{\alpha}^2}  \sum_{j \in [n]_0} \ln(|a_j|)|a_j|^{|\alpha|e^{i \Theta}} \left(\cos(\theta_j) x_{\alpha} +\sin(\theta_j)y_{\alpha} \right).
\end{align*}

We obtain:
\begin{align*}  & \lim_{\substack{|\alpha| \rightarrow 0 \\ \operatorname{arg} (\alpha)=\Theta } }  \frac{  \operatorname{ln} ( r_\alpha ) }{|\alpha| \cos (\Theta)} \\
& 
 = \lim_{\substack{|\alpha| \rightarrow 0 \\ \operatorname{arg} (\alpha)=\Theta }} \frac{  \sum_{j \in [n]_0} e^{i \Theta}\operatorname{ln} (|a_j|) |a_j|^{|\alpha| e^{i \Theta}} (\operatorname{cos}( \theta_j) x_{\alpha}  +  \operatorname{\sin} ( \theta_j) y_{\alpha}  )}{ r_\alpha^2 \cos (\Theta)} \\
 &=  \frac{e^{i \Theta}}{\cos (\Theta)} \frac{\sum_{j \in [n]_0}  \operatorname{ln} (|a_j|) (\operatorname{cos}( \theta_j) \sum_{k \in [n]_0} \cos( \theta_k)  + \sin ( \theta_j) \sum_{k \in [n]_0}  \sin( \theta_k) )}{\left(\sum_{k \in [n]_0} \cos (\theta_k) \right)^2 + \left(\sum_{k \in [n]_0} \sin (\theta_k) \right)^2}  \\
 &=  \frac{e^{i \Theta}}{\cos (\Theta)}  \sum_{k \in [n]_0}  \ln( |a_k|) m_k 
 \end{align*}
where $m_k:=\operatorname{ cos}(\theta_k) (\sum_{j \in [n]_0} \cos(\theta_j) ) + \sin(\theta_k)  (\sum_{j \in [n]_0} \sin(\theta_j) )$. Notice that 
\begin{align*}
    m_{k} &= \sum_{j \in [n]_0}(\cos(\theta_k)\cos(\theta_j) + \sin(\theta_k)\sin(\theta_j)) \\
    & = \sum_{j \in [n]_0} \cos(\theta_j-\theta_k) \\
    & = \operatorname{Re} \left(\sum_{j \in [n]_0}e^{i(\theta_j-\theta_k)} \right).
\end{align*}Since $r=1$, we have $\sum_{j \in [n]_0}e^{i\theta_j} = e^{i\theta}$ and so, we have $$m_k = \operatorname{Re}\left( \sum_{j \in [n]_0} e^{i (\theta_j- \theta_k )}\right) = \operatorname{Re}\left( e^{i (\theta - \theta_k) } \right)= \cos( \theta - \theta_k).$$ Thus, 
$$  \lim_{\substack{|\alpha| \rightarrow 0 \\ \operatorname{arg} (\alpha)=\Theta } }r_\alpha^{\frac{1}{\operatorname{Re}(\alpha)} }=  \prod_{ k \in [n]_0} |a_k|^{\cos( \theta - \theta_k)(1+i \tan(\Theta))} .$$ Finally, $\sum_{k \in [n]_0} m_k = \operatorname{Re}\left(\sum_{k \in [n]_0} \left(  \sum_{j \in [n]_0} e^{i (\theta_k -\theta_j)}\right) \right)=1$.
\end{enumerate}

\end{proof} 

\begin{rem}
We maintain the notation from Lemma \ref{zerolimit}, and further analyze the properties of the expressions involved:
\begin{enumerate}
\item We introduce the notation $m_k := \cos(\theta_k) \left(\sum_{j\in [n]} \cos(\theta_j)\right) + \sin(\theta_k) \left(\sum_{i \in [n]} \sin(\theta_j)\right)$. We can express $r^2$ as follows:
\begin{align*}
r^2 = \left| \sum_{k \in [n]_0} e^{i \theta_k} \right|^2 = \left(\sum_{k \in [n]_0}e^{i\theta_k} \right) \overline{\left(\sum_{k \in [n]_0}e^{i\theta_k} \right)}  = \sum_{k \in [n]}\sum_{j \in [n]}e^{i(\theta_k-\theta_j)}. 
\end{align*}
When $k=j$, we get $e^{i(\theta_k-\theta_j)} = 1$. Thus, we have 
\begin{align*}
    r^2 = n + \sum_{k \in [n]}\sum_{j \neq k}e^{i(\theta_k-\theta_j)} = n+ \sum_{k \in [n]}\sum_{j \neq k} \cos(\theta_k-\theta_j).  
\end{align*}
Since $e^{i(\theta_k-\theta_j)}+e^{-i(\theta_k-\theta_j)} = 2\cos(\theta_k - \theta_j)$, we obtain
\begin{align*}
    n+ \sum_{k \in [n]}\sum_{j \neq k} \cos(\theta_k-\theta_j) = n+2 \sum_{k \in [n]}\left(\sum_{j>k}\cos(\theta_j-\theta_k)\right).
\end{align*}
Thus, we have 
\begin{align*}
    r = \sqrt{n+2 \sum_{k \in [n]}\left(\sum_{j>k}\cos(\theta_j-\theta_k)\right)}.
\end{align*}
Similarly, we can express $m_k$ as:
$$
m_k = \operatorname{Re}\left(\sum_{j \in [n]} e^{i (\theta_k - \theta_j)}\right) = \sum_{j\in [n]} \cos(\theta_j - \theta_k) = 1 + \sum_{j \neq k} \cos(\theta_j - \theta_k) = 1 + 2\sum_{j > k} \cos(\theta_j - \theta_k).
$$
This leads to the following observations: 
\begin{itemize}
\item $r=1$ if and only if there exists $\theta \in \mathbb{R}$ such that $\sum_{k \in [n]} e^{i \theta_k} = e^{i \theta}$. This is also equivalent to $\sum_{k \in [n]} \left(\sum_{j > k} \cos(\theta_j - \theta_k)\right) = \frac{1-n}{2}$;
\item $r=0$ if and only if $\sum_{k \in [n]} e^{i \theta_k} = 0$. This is also equivalent to $\sum_{k \in [n]} \left(\sum_{j > k} \cos(\theta_j - \theta_k)\right) = -\frac{n}{2}$.
\end{itemize} 
\item When $e^{i \theta_k} \in \{ \pm 1\}$, we can make the following observations:
\begin{itemize}
\item  For odd values of $n$, $r=0$ is impossible, and $r=1$ if and only if $\frac{n-1}{2}$ of the $\theta_k$'s are $-1$, and $\frac{n+1}{2}$ of the $\theta_k$'s are $1$;
\item for even values of $n$, $r=0$ if and only if half of the $\theta_k$'s are $-1$, and the rest are $1$, and $r=1$ is impossible.
\end{itemize} 
\item  In the case where $F = \mathbb{R}$ and $\theta_k = \frac{2\pi k}{n}$, we find that:
\begin{itemize}
\item $\sum_{k \in [n]} e^{i\theta_k} = 0$, which implies $r=0$;
\item $\sum_{k \in [n]} e^{i(\pi-\theta_k)} = 1$, 
\end{itemize}  
providing a solution for an $n-1$ sum with $r=1$. This is related to an open problem in number theory concerning vanishing sums of roots of unity (cf. \cite{ Lenstra}).
\end{enumerate}
\end{rem}
\subsubsection{Recovering infinite norms from limits}
\label{infinitenorm}

In this section, we explore how one can recover the norms at infinity from inner products defined on certain non-associative near-vector spaces. These spaces satisfy all the axioms of a near-vector space except for associativity, and their values lie in a non-associative \(F\)-like line — a limit of \(F\)-like lines that ultimately forms a non-associative field.

We consider the limits at \(+\infty\) and \(-\infty\) of the finite inner products and norms introduced in Section~\ref{Lpnorms}. Throughout, we fix \(F \in \{\mathbb{R},\mathbb{C}\}\) and \(I \in \{[n],\mathbb{N}\}\). 

We take the product \(F^I\) with componentwise addition \(+_{\infty,0}\) and componentwise scalar multiplication \(\cdot_2\). This construction defines a non-associative, strongly regular near-vector space over \((F,\cdot)\).

Taking \(p \to \infty\) in Definition–Lemma~\ref{infty} (1), we obtain for all \(u = (u_i)_{i \in I},v = (v_i)_{i \in I} \in F^I\),
\[
\langle u,v\rangle_{\ell^\infty}
\;=\;
\lim_{p \to \infty}
\epsilon_p^{-1}\!\left(
  \sum_{i\in I}\epsilon_{p/2}\!\bigl(u_i\,\overline{v_i}\bigr)
\right)
\;=\;
|u_M\overline{v_M}|^{1/2}\,s_M,
\]
where \(M\in I\) is chosen so that \(|u_M\overline{v_M}|=\sup_{i\in I}\{|u_i\overline{v_i}|\}\), and
\[
\sum_{t\in [n]_M} e^{i\theta_t} \;=\; r_M\,s_M,
\qquad
r_M\in\mathbb{R}^+,\ \ s_M\in\mathbb{S},
\]
with the index set of maximizers
$
[n]_M \;:=\; \{\,i\in I \mid |u_i\overline{v_i}|=|u_M\overline{v_M}|\,\}
$
and the corresponding norm
\[
\|u\|_{\ell^{\infty}}
\;=\;
\lim_{p \to \infty}
\left(\sum_{i\in I}|u_i|^p\right)^{1/p}
\;=\;
\sup_{i\in I}\{|u_i|\}.
\]
While in the classical vector–space setting these constructions do not yield a genuine inner product, in the near–vector–space setting they do: all inner–product axioms pass to the limit, and the only potential issue—positive definiteness—can be checked directly from the norm at infinity. Thus, even “at infinity,” the near–vector space framework retains a coherent inner–product–type structure valued in a non-associative field obtained as a limit of \(F\)-like lines. In particular, we recover \[\ell^\infty(F)= \{ (u_i)_{i\in I} | \sup_{i\in I}\{|u_i|\} < \infty \} \] with a pairing whose induced norm is the usual \(\ell^\infty\) norm, now taking values in the non-associative field \((F,+_{\infty},\cdot)\): the form \(\langle -,- \rangle_{\ell^{\infty}}\) defines an inner–product–type structure on \((F^I,+_{\infty,0},\cdot)\) over \((F,+_{\infty},\cdot)\), and \(\|-\|_{\ell^{\infty}}\) is its associated norm.

We take the product \(F^{I}\) with componentwise addition \(+_{-\infty,0}\) and componentwise scalar multiplication \(\cdot_{2}\). This yields a non-associative, strongly regular near–vector space over \((F,\cdot)\).

Taking \(p\to -\infty\), by Definition–Lemma~\ref{infty} (2) we obtain, for \(u = (u_i)_{i \in I},v = (v_i)_{i \in I} \in F^{I}\),
\[
\langle u,v\rangle_{\ell^{-\infty}}
=
\lim_{p\to -\infty}
\epsilon_{p}^{-1}\!\left(\sum_{i\in I}\epsilon_{p/2}\bigl(u_{i}\,\overline{v_{i}}\bigr)\right)
= |u_m\overline{v_m}|^{1/2}s_m,
\]
where $m \in I$ is chosen so that $|u_m\overline{v_m}| = \inf_{i\in I}\,\{ \bigl|u_{i}\,\overline{v_{i}}\bigr|\}$, and \[
\sum_{t\in [n]_m} e^{i\theta_t} \;=\; r_m\,s_m,
\qquad
r_m\in\mathbb{R}^+,\ \ s_m\in\mathbb{S},
\]
with the index set of minimizers
$[n]_m \;:=\; \{\,i\in I \mid |u_i\overline{v_i}|=|u_m\overline{v_m}|\,\}$
and the corresponding norm
\[
\|u\|_{\ell^{-\infty}}
=
\lim_{p\to -\infty}\!\left(\sum_{i\in I}|u_{i}|^{p}\right)^{1/p}
=
\inf_{i\in I}\{|u_{i}|\}.
\]

All the inner–product axioms remain true except positive definiteness, which fails at \(-\infty\).
Consequently, \(\langle-,-\rangle_{\ell^{-\infty}}\) is an indefinite inner–product–type pairing on
\((F^{I},+_{-\infty,0},\cdot)\) (valued in \((F,+_{-\infty},\cdot)\)), and \(\|-\|_{\ell^{-\infty}}\) is only a
semi-norm on \((F^{I},+_{-\infty,0},\cdot)\). 
Accordingly, the space at negative infinity can be defined as
\[
\ell^{-\infty}(F)
\;=\;
\bigl\{\, (u_i)_{i\in I}\ \big|\ \inf_{i\in I}\{|u_i|\} > -\infty \,\bigr\}.
\]
We can also consider the limit of the mixed pairing \(\langle A,B\rangle_{\ell^{p,q}}\) as \(p,q \to \infty\), or the iterated limits obtained by fixing one variable and letting the other tend to infinity — that is, with \(p\) fixed and \(q \to \infty\), or with \(q\) fixed and \(p \to \infty\).
Concretely, leading to the norms: 
\[
\|A\|_{\ell^{\infty,\infty}}
=\lim_{q\to\infty}\lim_{p\to\infty}
\left(\sum_{j \in [n]}\Big(\sum_{i \in [m]}|a_{i,j}|^{p}\Big)^{q/p}\right)^{\!1/q}
=\max_{(i,j) \in [m] \times [n]}\{|a_{i,j}|\}.
\]
Accordingly, the space at infinity can be defined as 
\[\ell^{\infty,\infty}(F^{m \times n}) = \left\{ (a_{i,j}) \Big\mid \max_{(i,j) \in [m] \times [n]}\{|a_{i,j}|\}< \infty\}\right\}. \]
More generally, with our column–first convention,
\[
\|A\|_{\ell^{p,\infty}}
=\lim_{q\to\infty}\|A\|_{\ell^{p,q}}
=\max_{j \in [n]} \left\{\left(\sum_{i \in [m]}|a_{i,j}|^{p}\right)^{\!1/p}\right\}.
\]
Dually,
\[
\|A\|_{\ell^{\infty,q}}
=\lim_{p\to\infty}\|A\|_{\ell^{p,q}}
=\left(\sum_{j \in [n]}\big(\max_{i \in [m]}\{|a_{i,j}|\}\big)^{q}\right)^{\!1/q}.
\]
For the composite inner product corresponding to these norms, we leave it to the reader to deduce the expression from the constructions above.

On \(\mathcal{F}_{\mathrm{int}}(I)\), fix the deformed scalar multiplication and addition
\[
(\alpha \cdot_{2} f)(x) := \epsilon_{2}(\alpha)\,f(x),
\qquad
(f +_{\infty,0} g)(x) := f(x) +_{\infty,0} g(x),
\]
for \(\alpha\in F\) and \(f,g\in\mathcal{F}_{\mathrm{int}}(I)\).
With these operations, \(\mathcal{F}_{\mathrm{int}}(I)\) acquires a (possibly non-associative) near–vector space structure.

For \(f,g\in\mathcal{F}_{\mathrm{int}}(I)\), set \(\varphi(x):=f(x)\,\overline{g(x)}\).
Assume there exists \(x_M\in I\) such that
\[
|\varphi(x_M)| \;=\; \operatorname*{ess\,sup}_{x\in I}\{ |\varphi(x)|\}.
\]
Let the maximizer set be
\[
[n]_M \;:=\; \{\,x\in I : |\varphi(x)| = |\varphi(x_M)|\,\}.
\]
For each \(x\in [n]_M\), write \(\varphi(x)=|\varphi(x_M)|\,e^{i\theta_x}\).
Since the maximizer set \([n]_M\) may be infinite, we impose the following standing assumption: the series
\(\sum_{t\in [n]_M} e^{i\theta_t}\) converges in \(\mathbb{C}\). Under this assumption, define
\[
\sum_{t\in [n]_M} e^{i\theta_t} \;=\; r_M\,s_M,
\qquad
r_M\in\mathbb{R}_{\ge 0},\ \ s_M\in \mathbb{S}:=\{z\in\mathbb{C}:|z|=1\},
\]
i.e., \(s_M\) is the phase of the aggregate of the maximizing directions.

Then, the \(\mathcal{L}^{\infty}\)-pairing is given by
\[
\big\langle f, g \big\rangle_{\mathcal{L}^{\infty}}
\;:=\;
\lim_{p\to\infty}
\epsilon_{1/p}\!\left(\int_{I} \epsilon_{p/2}\!\bigl(\varphi(x)\bigr)\,dx\right)
\;=\;
\bigl|\varphi(x_M)\bigr|^{1/2}\, s_M
\;=\;
\bigl|f(x_M)\,\overline{g(x_M)}\bigr|^{1/2}\, s_M.
\]

Next, we recover the \(\mathcal{L}^{\infty}\) norm. By Definition–Lemma~\ref{infty}, we have
\[
\|f\|_{\mathcal L^{\infty}}
\;:=\;
\lim_{p \to \infty}
\left(\int_{I} |f(x)|^{p}\,dx\right)^{\!1/p}
\;=\;
\operatorname*{ess\,sup}_{x\in I}\{|f(x)|\}.
\]
(See, e.g., \cite{Fitz} for the classical \(\mathcal{L}^{p}\to\mathcal{L}^{\infty}\) limit.)

Accordingly, the corresponding space \(\mathcal L^{\infty}(F)\) can be described as
\[
\mathcal L^{\infty}(F)
\;=\;
\Bigl\{\, f\in \mathcal{F}_{\mathrm{int}}(I)\ \Big|\ 
\operatorname*{ess\,sup}_{x\in I}\{\big|\bigl(f(x)\overline{g(x)}\bigr)\big| <\infty \Bigr\}.
\]
The pairing \(\langle -,-\rangle_{\mathcal L^{\infty}}\) endows the above structure with an
inner–product–type non-associative near–space over the non-associative near-line
\((F,+_{\infty,0},\cdot)\), and its induced norm is \(\|-\|_{\mathcal L^{\infty}}\).

On \(\mathcal{F}_{\mathrm{int}}(I)\), fix the deformed scalar and addition operations
\[
(\alpha\cdot_{2} f)(x):=\epsilon_{2}(\alpha)\,f(x),
\qquad
(f+_{-\infty,0}g)(x):=f(x)+_{-\infty,0}g(x),
\]
for \(\alpha\in F\) and \(f,g\in\mathcal{F}_{\mathrm{int}}(I)\).
With these operations, \(\mathcal{F}_{\mathrm{int}}(I)\) acquires the structure of a non-associative near–vector space.

For \(f,g\in\mathcal{F}_{\mathrm{int}}(I)\), we follow the same notations as above. Assume there exists $x_m \in I$ such that 
\[ |\varphi(x_m)| = \operatorname*{ess\,inf}_{x\in I}\{|f(x)|\}.\]
Let the minimizer set be 
\[
[n]_m \;:=\; \{\,x\in I : |\varphi(x)| = |\varphi(x_m)|\,\}.
\]
For each $x \in [n]_m$, we write $\varphi(x) = |\varphi(x_m)|e^{i\theta_x}$. Since the minimizer set $[n]_m$ may be infinite, we impose the following standing assumption: the series
\(\sum_{t\in [n]_M} e^{i\theta_t}\) converges in \(\mathbb{C}\). Under this assumption, define
\[
\sum_{t\in [n]_m} e^{i\theta_t} \;=\; r_m\,s_m,
\qquad
r_m\in\mathbb{R}_{\ge 0},\ \ s_m\in \mathbb{S}:=\{z\in\mathbb{C}:|z|=1\},
\]
i.e., \(s_m\) is the phase of the aggregate of the minimizing directions.
Then, the $\mathcal{L}^{\infty}$-pairing is given by

\[\langle f, g \rangle_{\mathcal L^{-\infty}}
\;:=\;
\lim_{p \to -\infty}\,
\epsilon_{1/p}\!\left(\int_{I} \epsilon_{p/2}\!\bigl(f(x)\,\overline{g(x)}\bigr)\,dx\right)
\;= |\varphi(x_m)|^{1/2}s_m = \; \big|\bigl(f(x_m)\overline{g(x_m)}\bigr)\big|^{1/2} s_{m}.
\]

All the inner-product axioms remain true except for positive definiteness, which fails as $-\infty$. Consequently, $\langle -,- \rangle_{\mathcal{L}_{-\infty}}$ is an indefinite inner-product-type pairing on $(F^I,+_{-\infty,0},\cdot)$ (valued in $(F,+_{-\infty,0},\cdot)$). 

Next, we recover the \(\mathcal{L}^{-\infty}\) semi-norm. By Definition–Lemma~\ref{infty}, we have
\[
\|f\|_{\mathcal L^{-\infty}}
\;:=\;
\lim_{p \to -\infty}
\left(\int_{I} |f(x)|^{p}\,dx\right)^{\!1/p}
\;=\;
\operatorname*{ess\,inf}_{x\in I}\{|f(x)|\}.
\]

Accordingly, the corresponding space \(\mathcal L^{-\infty}(F)\) can be described as
\[
\mathcal L^{-\infty}(F)
\;=\;
\Bigl\{\, f\in \mathcal{F}_{\mathrm{int}}(I)\ \Big|\ 
\operatorname*{ess\,inf}_{x\in I}\{|f(x)|\}>-\infty \Bigr\}.
\]
The pairing \(\langle -,-\rangle_{\mathcal L^{-\infty}}\) endows the above structure with an indefinite
inner–product–type non-associative near–space over the non-associative near-line
\((F,+_{-\infty,0},\cdot)\), and its induced semi-norm is \(\|-\|_{\mathcal L^{-\infty}}\).

\section{Application: weighted generalized means for a family of complex numbers} 
\label{weighted}

Generalized means are fundamental to modern data science, physics and engineering,  particularly in regularization, loss functions, distance metrics, and sparse signal processing \cite{Brunton, James, Murphy}. 
Lemma~\ref{infty} provides a framework for defining maximum and minimum means, extending naturally to complex numbers—including non-positive real numbers. By employing the $\alpha$-sum operations, we can recover several classical means as particular cases. The use of $\alpha$-sums and their limits allows us to formulate a unified concept of \emph{generalized means} for arbitrary families of complex (or real) numbers. These constructions specialize to the familiar means when restricted to positive real values.

\medskip
Let \(\alpha \in \mathbb{R}^{+} \setminus \{0\}\), let \(\{\omega_k\}_{k \in [n]}\) be a family of positive real weights such that \(\sum_{k \in [n]} \omega_k = 1\), and let \(\{r_k\}_{k \in [n]}\) be a family of positive real numbers. Recall that the \emph{weighted power mean} of this family is defined by
\[
M_{\alpha}((r_{1},\omega_1), \ldots, (r_{n}, \omega_n))
\;=\;
\left(\sum_{j \in [n]} \omega_{j} r_{j}^{\alpha}\right)^{\frac{1}{\alpha}},
\]
where \(\alpha \in \mathbb{R}\) (cf.~\cite[pp.~175–265]{Bullen}).

\medskip
We now express this power mean in terms of the $+_{\alpha}$-sum and then show how this formulation extends naturally to complex numbers.

\smallskip
Write each weight as
\[
\omega_{k} \;=\; e^{i\theta_{k}} + e^{-i\theta_{k}},
\qquad
\theta_k = \operatorname{arccos}\!\left(\frac{\omega_k}{2}\right) \in \left(-\frac{\pi}{2}, \frac{\pi}{2}\right).
\]
Define
\[
a_{k,j} = r_{k}\,e^{i j \theta_{k}},
\qquad
k \in [n],\ j \in \{\pm 1\}.
\]
Then, for any exponent \(\alpha \in \mathbb{R}\), the usual weighted power mean can be rewritten as
\begin{align*}
 {}^{\alpha}\!\!\!\!\!\!\!\!\!\!\!\!\!\!\sum_{(j,k)\in [n]\times\{\pm 1\}} a_{k,j} 
&= \epsilon_{\alpha}^{-1}\!\left(\sum_{k \in [n]} \epsilon_{\alpha}\!\left(r_k e^{i\theta_k} + r_k e^{-i\theta_k}\right)\right)\\
&= \epsilon_{\alpha}^{-1}\!\left(\sum_{k \in [n]}\left(r_k^{\alpha} e^{i\theta_k} + r_k^{\alpha} e^{-i\theta_k}\right)\right)\\
&= \epsilon_{\alpha}^{-1}\!\left(\sum_{k \in [n]} r_k^{\alpha}(e^{i\theta_k}+e^{-i\theta_k})\right)\\
&= \epsilon_{\alpha}^{-1}\!\left(\sum_{k \in [n]} \omega_k r_k^{\alpha}\right)\\
&= \left(\sum_{k \in [n]} \omega_k r_k^{\alpha}\right)^{1/\alpha}\\
&= M_{\alpha}((r_{1}, \omega_1), \ldots, (r_{n}, \omega_n)).
\end{align*}

\medskip
Proceeding analogously with the sums defined through the limiting operations at
\(\pm\infty\), we obtain the corresponding extreme (maximum and minimum) means,
thereby extending the classical notion of power means to complex and non-associative
settings.

\begin{prop}
Let \(\{\omega_k\}_{k \in [n]}\) be a family of positive real numbers such that
\(\sum_{k \in [n]} \omega_k = 1\), and let \(\{r_k\}_{k \in [n]}\) be a family of positive
real numbers. For each \(k \in [n]\), write
\[
\omega_k = e^{i\theta_k} + e^{-i\theta_k},
\qquad
\theta_k = \arccos\!\left(\frac{\omega_k}{2}\right) \in \left(-\frac{\pi}{2}, \frac{\pi}{2}\right).
\]
Define \(a_{k,j} = r_k e^{i j \theta_k}\) for all \((k,j) \in [n] \times \{\pm 1\}\).
Then:
\begin{enumerate}
  \item \[
    {}^{\infty,0}\!\!\!\!\!\!\!\!\!\!\!\!\sum_{(k,j)\in [n]\times\{\pm 1\}} a_{k,j}
    \;=\;
    \max\{\,|r_k| \mid k\in [n]\,\};
  \]

  \item \[
    {}^{-\infty,0}\!\!\!\!\!\!\!\!\!\!\!\!\sum_{(k,j)\in [n]\times\{\pm 1\}} a_{k,j}
    \;=\;
    \min\{\,|r_k| \mid k\in [n]\,\};
  \]

  \item \[
    {}^{0,0}\!\!\!\!\!\!\!\!\!\!\!\!\sum_{(k,j)\in [n]\times\{\pm 1\}} a_{k,j}
    \;=\;
    \prod_{k\in [n]} r_k^{\omega_k}.
    \]
    This corresponds to the generalized weighted geometric mean with weights
    \(\omega_k\), for all \(k\in [n]\). In particular, for uniform weights
    \(\omega_k = \tfrac{1}{n}\), we recover the standard geometric mean of the
    \(r_k\)'s.

  \item For any \(\alpha \in \mathbb{R}\),
    \[
    {}^{\alpha}\!\!\!\!\!\!\!\!\!\!\!\!\!\!\sum_{(k,j)\in [n]\times\{\pm 1\}} a_{k,j}
    \;=\;
    \left(\sum_{k\in [n]} \omega_k r_k^{\alpha}\right)^{\!1/\alpha}.
    \]
    In particular, for uniform weights \(\omega_k = \tfrac{1}{n}\) and various
    values of \(\alpha\):
    \begin{itemize}
      \item \(\alpha > 0\): the power mean of the \(r_k\)'s,
      \item \(\alpha = -1\): the harmonic mean,
      \item \(\alpha = 1\): the arithmetic mean,
      \item \(\alpha = 2\): the quadratic (root–mean–square) mean,
      \item \(\alpha = 3\): the cubic mean.
    \end{itemize}
\end{enumerate}
\end{prop}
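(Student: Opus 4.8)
The plan is to obtain all four identities as direct instantiations of the limit computations in Definition--Lemma~\ref{infty} and Definition--Lemma~\ref{zerolimit}, applied to the $2n$-element family $\{a_{k,j}\}_{(k,j)\in[n]\times\{\pm1\}}$ (so the index count $n$ of those statements is here $2n$ and the index set is $[n]\times\{\pm1\}$), together with the finite-$\alpha$ computation already displayed just above the proposition. The single algebraic observation driving everything is that $\omega_k=e^{i\theta_k}+e^{-i\theta_k}=2\cos\theta_k$ makes every phase sum occurring in those lemmas collapse to a partial sum of the weights: for any $S\subseteq[n]$,
\[
\sum_{\substack{(k,j)\in[n]\times\{\pm1\}\\ k\in S}} e^{i j\theta_k}
\;=\;\sum_{k\in S}\bigl(e^{i\theta_k}+e^{-i\theta_k}\bigr)
\;=\;\sum_{k\in S}\omega_k\;>\;0,
\]
and this quantity equals $1$ when $S=[n]$. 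Hence, in the notation of the two Definition--Lemmas, every radial part is a positive real (so their existence hypotheses hold), every phase part equals $1$, and for the direction $\Theta=0$ the twisting factors of the form $r^{-i\tan\Theta}$ and the factor $e^{i\theta}$ are all trivial.

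Granting this, (1) follows by applying Definition--Lemma~\ref{infty}(1) with $\Theta=0$: the moduli are $|a_{k,j}|=r_k$, so choosing $M$ to realize $\max_k r_k$, the level set of maximal modulus has phase sum $\sum_{k:\,r_k=\max}\omega_k>0$; the formula then reads ${}^{\infty,0}\!\sum a_{k,j}=|a_M|\cdot r_M^{-i\tan 0}\cdot s_M=\max_k\{|r_k|\}$. Item (2) is the mirror image, using Definition--Lemma~\ref{infty}(2) and the minimal-modulus level set. For (3), apply Definition--Lemma~\ref{zerolimit} with $\Theta=0$: since no $a_{k,j}$ vanishes and $\sum_{(k,j)}e^{ij\theta_k}=\sum_k\omega_k=1$, we are precisely in the case $r=1$ with aggregate phase $\theta=0$, and the product formula gives
\[
{}^{0,0}\!\sum a_{k,j}=\prod_{(k,j)} r_k^{\cos(j\theta_k)}=\prod_{k\in[n]} r_k^{2\cos\theta_k}=\prod_{k\in[n]} r_k^{\omega_k},
\]
using $\cos(j\theta_k)=\cos\theta_k$ for $j=\pm1$; specializing $\omega_k=1/n$ yields the geometric mean. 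Finally, (4) is the computation recorded immediately before the statement: ${}^{\alpha}\!\sum_{(k,j)}a_{k,j}=\epsilon_\alpha^{-1}\bigl(\sum_k r_k^{\alpha}(e^{i\theta_k}+e^{-i\theta_k})\bigr)=\epsilon_\alpha^{-1}\bigl(\sum_k\omega_k r_k^{\alpha}\bigr)=\bigl(\sum_k\omega_k r_k^{\alpha}\bigr)^{1/\alpha}$, because $\epsilon_\alpha^{-1}$ restricted to $\mathbb{R}^{+}$ is $x\mapsto x^{1/\alpha}$; reading off $\omega_k=1/n$ and $\alpha\in\{-1,1,2,3\}$ gives the harmonic, arithmetic, quadratic and cubic means.

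No genuine obstacle is expected, since each item merely specializes an already-established limit. The care needed is purely in the bookkeeping: keeping the averaging data $r_k$ notationally distinct from the internal radial symbols $r_M,r_m,r$ of Definition--Lemma~\ref{infty} and Definition--Lemma~\ref{zerolimit}; confirming that $\theta_k=\arccos(\omega_k/2)$ lands in the admissible range (here $\omega_k\in(0,1]$ forces $\theta_k\in[\pi/3,\pi/2)$, so indeed $2\cos\theta_k=\omega_k>0$); and checking the existence clauses of the two Definition--Lemmas ($M$ or $m$ exists, and $r\neq 0$, and $r=1$ for the origin case), all of which are immediate from positivity of the weights and the normalization $\sum_k\omega_k=1$. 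One could, for completeness, recall that the iterated operation ${}^{\alpha}\!\sum$ equals $\epsilon_\alpha^{-1}\circ\bigl(\textstyle\sum_k\epsilon_\alpha(\cdot)\bigr)$ by associativity of $+_\alpha$, but this is already built into the $F$-like line formalism.
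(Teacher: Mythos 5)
Your proposal is correct and follows essentially the same route as the paper, which proves item (4) via the displayed $+_{\alpha}$-sum computation immediately preceding the proposition and obtains items (1)--(3) by "proceeding analogously" through Definition--Lemma~\ref{infty} and Definition--Lemma~\ref{zerolimit}; your write-up simply makes explicit the bookkeeping (level-set phase sums collapsing to partial sums of the weights, triviality of the $r^{-i\tan\Theta}$ and $e^{i\theta}$ factors at $\Theta=0$, and the existence clauses) that the paper leaves implicit.
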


In light of these constructions, we now introduce the notion of \emph{weighted generalized means}, which unifies classical means and their extensions to complex and non-associative settings.

\begin{definlem}\label{weight}
Let $[n]$ be a finite set.  
Let $\{\omega_k\}_{k \in [n]}$ be a family of complex numbers, and let $\{r_k\}_{k \in [n]}$ be a family of complex numbers written as 
\[
r_k = |r_k| e^{i \phi_k},
\qquad 
k \in [n],
\]
satisfying the normalization condition
\[
\left|\sum_{k \in [n]} \omega_k e^{i \phi_k}\right| = 1.
\]
For each $k \in [n]$, express the weight $\omega_k$ as a finite sum
\[
\omega_k = \sum_{j \in [m_k]} e^{i \theta_{k,j}},
\qquad
\theta_{k,j} \in \mathbb{R},
\]
and define
\[
a_{k,j} = r_k e^{i \theta_{k,j}},
\qquad
(k,j) \in [n] \times [m_k].
\]
We define the \textsf{weighted generalized mean of type~$\alpha$} with respect to the weighted family 
\((r_1,\omega_1),\ldots,(r_n,\omega_n)\), denoted by
\[
M_{\alpha}((r_1,\omega_1),\ldots,(r_n,\omega_n))
\;:=\;
{}^{\alpha}\!\!\!\!\!\!\!\!\!\!\!\!\!\sum_{(k,j)\in [n]\times[m_k]} a_{k,j},
\]
where
\[
\alpha \in 
\mathbb{C}\setminus 
\Bigl(
i\mathbb{R}\;
\cup\;
\{(0,\Theta),(\infty,\Theta),(-\infty,\Theta)
  \mid 
  \Theta \in (-\tfrac{\pi}{2},\tfrac{\pi}{2}) \cup (\tfrac{\pi}{2},\tfrac{3\pi}{2})
\}
\Bigr).
\]
We distinguish the following cases:
\begin{itemize}
\item If \(\alpha \in \mathbb{C}\setminus i\mathbb{R}\), we call \(M_\alpha\) the \textsf{weighted power mean};
\item If \(\alpha = (0,\Theta)\), we call \(M_\alpha\) the \textsf{weighted geometric mean in the direction~$\Theta$};
\item If \(\alpha = (\infty,\Theta)\), we call \(M_\alpha\) the \textsf{weighted maximum in the direction~$\Theta$};
\item If \(\alpha = (-\infty,\Theta)\), we call \(M_\alpha\) the \textsf{weighted minimum in the direction~$\Theta$}.
\end{itemize}
\end{definlem}

\begin{proof}
We first show that any complex number \(\omega_k\) can be expressed as a sum of complex numbers of modulus~1.  
Indeed, writing \(\omega_k = |\omega_k| e^{i\theta_{\omega_k}}\), we note that since \(|\omega_k|\) is real, one can write
\[
|\omega_k| = e^{i\beta_k} + e^{-i\beta_k},
\qquad
\beta_k = \arccos\!\left(\frac{|\omega_k|}{2}\right) \in \left(-\frac{\pi}{2},\frac{\pi}{2}\right).
\]
Hence,
\[
\omega_k 
= |\omega_k| e^{i\theta_{\omega_k}}
= (e^{i\beta_k}+e^{-i\beta_k}) e^{i\theta_{\omega_k}}
= e^{i(\beta_k+\theta_{\omega_k})} + e^{-i(\beta_k-\theta_{\omega_k})},
\]
which expresses \(\omega_k\) as a sum of two complex numbers of unit modulus.

\medskip
Next, we show that the generalized weighted means are independent of the choice of \(\theta_{k,j}\)'s.  
Suppose that for a fixed \(k\in[n]\),
\[
\omega_k = \sum_{j \in [m_k]} e^{i\theta_{k,j}}
= \sum_{j \in [m_k']} e^{i\theta_{k,j}'},
\]
where \(\theta_{k,j}, \theta_{k,j}' \in \mathbb{R}\) and \(m_k,m_k' \in \mathbb{N}\).
Define the corresponding families
\[
a_{k,j} = r_k e^{i\theta_{k,j}}
\quad\text{and}\quad
a_{k,j}' = r_k e^{i\theta_{k,j}'},
\]
and index sets
\[
J_k =[n]\times [m_k],
\qquad
J_k' =[n]\times [m_k'].
\]
Then,
\[
\begin{aligned}
{}^{\alpha}\!\!\!\!\!\sum_{(k,j)\in J_k} a_{k,j}
&= \epsilon_{\alpha}^{-1}\!\left(\sum_{(k,j)\in J_k} |r_k|^{\alpha} e^{i(\phi_k+\theta_{k,j})}\right)
= \epsilon_{\alpha}^{-1}\!\left(\sum_{k\in[n]} |r_k|^{\alpha} e^{i\phi_k} \sum_{j\in[m_k]} e^{i\theta_{k,j}}\right)\\
&= \epsilon_{\alpha}^{-1}\!\left(\sum_{k\in[n]} |r_k|^{\alpha} e^{i\phi_k} \sum_{j\in[m_k']} e^{i\theta_{k,j}'}\right)
= \epsilon_{\alpha}^{-1}\!\left(\sum_{(k,j)\in J_k'} |r_k|^{\alpha} e^{i(\phi_k+\theta_{k,j}')}\right)\\
&= M_{\alpha}\big((a_{k,j}')_{(k,j)\in J_k'}\big).
\end{aligned}
\]
Thus, \(M_{\alpha}\) is independent of the particular choice of the angles \(\theta_{k,j}\).  
Since the other types of means (geometric, maximum, and minimum) arise as limits of \(M_{\alpha}\), this independence extends to all weighted generalized means defined above.
\end{proof}

This definition subsumes the classical power mean (for \(r_k>0\) and \(\alpha\in\mathbb{R}^+\setminus\{0\}\)) and extends seamlessly to datasets with negative real values (e.g., \(\phi_j=\pi\)) and to complex inputs. 

The construction is natural in algebraic frameworks isomorphic to \(\mathbb{R}\) or \(\mathbb{C}\) that retain the usual multiplication while deforming addition, so the familiar mean formulas transport unchanged to this setting.

Kolmogorov  (cf. \cite{Kolmogorov}) showed that any mapping $M: (\mathbb{R}^{+} \setminus \{0\})^{n} \mapsto \mathbb{R}^{+}\setminus \{0\}$ which obeys the 
following axioms is a generalised mean:
\begin{enumerate}
	\item[M1.] $M(r_{1}, \ldots, r_{n})$ is continuous and monotonic;
	\item[M2.] $M(r_{1}, \ldots, r_{n})$ is symmetric. In other words, is invariant under permutation of its
	arguments; 
	\item[M3.] the mean of repeated data equals the repeated value. In other words, $M(\underbrace{r,\ldots, r}_{\text{n times}}) = r$.  
	\item[M4.] the mean of the sample remains unchanged if a part of the sample is replaced by its corresponding
	mean. That is, if, say $m = M(r_{1}, \ldots, r_{k})$ where $k < n$, then $M(r_{1}, \ldots, r_{n}) = M(\underbrace{m,\ldots, m}_{\text{k times}}, r_{k+1}, \ldots, r_{n})$.
\end{enumerate}
Kolmogorov also proved that any mapping satisfying (M1)–(M4) has the representation
\[
M_{g}(r_{1},\ldots,r_{n})
=\;
g^{-1}\!\left(\frac{1}{n}\sum_{i\in[n]} g(r_{i})\right),
\]
for some continuous, monotone function \(g\).

In our setting, analogous means on \(\mathbb{R}^{+}\setminus\{0\}\) arise naturally by taking
\(g\) to be \emph{continuous multiplicative}; such \(g\) are in particular monotone and fit
the paradigm of keeping the usual multiplication while deforming addition in structures
isomorphic to \(\mathbb{R}\). Over \(\mathbb{C}\), “monotone” has no direct analogue, whereas
“multiplicative” does; axioms (M2)–(M4) remain meaningful in either case.

Kolmogorov’s theorem addresses equal weights; we adapt the argument to general weights and
verify that our means satisfy the weighted versions of (M2)–(M4). Property (M1) was discussed
earlier and is omitted here.
\begin{itemize}
    \item[\textbf{M2.}] This property holds because the addition operation is commutative.

    \item[\textbf{M3.}] For repeated values, where \(r_k = r\) for all \(k \in I\) and \(\omega_k = \omega = \frac{1}{n}\), we have
    \begin{align*}
        M_{\alpha}(\underbrace{(r,\omega),\ldots,(r,\omega)}_{\text{\(n\) times}})
        &= \epsilon_{\alpha}^{-1}\!\left(\sum_{k \in [n]} \omega\, \epsilon_{\alpha}(r)\right)
        = \epsilon_{\alpha}^{-1}\!\left(\frac{1}{n}\sum_{k \in [n]} \epsilon_{\alpha}(r)\right) \\
        &= \epsilon_{\alpha}^{-1}\!\left(\epsilon_{\alpha}(r)\right)
        = r.
    \end{align*}

    \item[\textbf{M4.}] We formulate the result by taking the first \(k\) points as a sub-sample, but the argument applies to any subset.  
    To compute the mean of a sub-sample, the weights must be normalized relative to that subset.  
    For the sub-sample of size \(k<n\), the adjusted weights are
    \[
    \omega_{j}' = \frac{\omega_{j}}{\sum_{i \in [k]} \omega_{i}},
    \quad\text{so that}\quad
    \sum_{j \in [k]} \omega_{j}' = 1.
    \]
    The mean of the sub-sample is then
    \[
    \overline{r} = M_{\alpha}((r_{1},\omega_{1}'),\ldots,(r_{k},\omega_{k}'))
    = \epsilon_{\alpha}^{-1}\!\left(\sum_{j \in [k]} \omega_{j}'\,\epsilon_{\alpha}(r_{j})\right).
    \]
    When each of the first \(k\) entries in the full sample is replaced by this mean value \(\overline{r}\), their new common weight is
    \[
    \overline{\omega} = \frac{1}{k}\sum_{j \in [k]}\omega_{j},
    \]
    ensuring that the total weight remains normalized:
    \(k\overline{\omega} + \sum_{j=k+1}^{n}\omega_{j} = 1\).

    The mean of the modified dataset is therefore
    \begin{align*}
        &M_{\alpha}(\underbrace{(\overline{r},\overline{\omega}),\ldots,(\overline{r},\overline{\omega})}_{\text{\(k\) times}},(r_{k+1},\omega_{k+1}),\ldots,(r_{n},\omega_{n})) \\
        &= \epsilon_{\alpha}^{-1}\!\left(
            k\overline{\omega}\,\epsilon_{\alpha}(\overline{r})
            + \sum_{j=k+1}^{n}\omega_{j}\,\epsilon_{\alpha}(r_{j})
        \right) \\
        &= \epsilon_{\alpha}^{-1}\!\left(
            k\!\left(\frac{1}{k}\sum_{i \in [k]}\omega_{i}\right)
            \epsilon_{\alpha}\!\left(
                \epsilon_{\alpha}^{-1}\!\left(
                    \sum_{j \in [k]}\omega_{j}'\,\epsilon_{\alpha}(r_{j})
                \right)
            \right)
            + \sum_{j=k+1}^{n}\omega_{j}\,\epsilon_{\alpha}(r_{j})
        \right) \\
        &= \epsilon_{\alpha}^{-1}\!\left(
            \sum_{i \in [k]}\omega_{i}
            \!\left(
                \sum_{j \in [k]}\frac{\omega_{j}}{\sum_{i \in [k]}\omega_{i}}
                \epsilon_{\alpha}(r_{j})
            \right)
            + \sum_{j=k+1}^{n}\omega_{j}\epsilon_{\alpha}(r_{j})
        \right) \\
        &= \epsilon_{\alpha}^{-1}\!\left(
            \sum_{j \in [n]}\omega_{j}\epsilon_{\alpha}(r_{j})
        \right)
        = M_{\alpha}((r_{1},\omega_{1}),\ldots,(r_{n},\omega_{n})).
    \end{align*}
    Hence, property (M4) is satisfied.
\end{itemize}

\end{document}